\newtheorem{Atheorem}{Theorem}[section]
\newtheorem{theorem}{Theorem}[section]
\newtheorem{lemma}[theorem]{Lemma}
\newtheorem{corollary}[theorem]{Corollary}
\newtheorem{proposition}[theorem]{Proposition}
\newtheorem{definition}[theorem]{Definition}
\newtheorem{question}{Question}
\newtheorem{remark}[theorem]{Remark}
\newenvironment{sproof}[1]
{\begin{proof}[#1]} {\end{proof}}
\newcommand{\N}{\mathbb N}
\newcommand{\Z}{\mathbb Z}
\newcommand{\Q}{\mathbb Q}
\newcommand{\C}{\mathbb C}
\newcommand{\M}{\mathcal{M}_2}
\newcommand{\Mn}{\mathcal{M}_n}
\newcommand{\Rn}{\mathcal{R}_n}
\newcommand{\Ru}{\mathcal{R}_1}
\newcommand{\Rd}{\mathcal{R}_2}
\newcommand{\Cp}{\mathcal{C}_p}
\newcommand{\Co}{\mathcal{C}_0}
\newcommand{\Cu}{\mathcal{C}}
\newcommand{\F}{\mathcal{F}}
\newcommand{\G}{G(\lambda)}
\newcommand{\GL}{\textnormal{GL}}
\newcommand{\Aut}{\textnormal{Aut}}
\newcommand{\Inn}{\textnormal{Inn}}
\newcommand{\V}{\operatorname{V}}
\newcommand{\br}[1]{\lbrack #1 \rbrack}
\newcommand{\Pres}[2]{\left\langle #1 \ \big\vert\  #2 \right\rangle}
\newcommand{\PresM}[2]{\ll #1 \ \big\vert\ #2 \gg}
\newcommand{\Ma}{M(\mathfrak{a})}
\newcommand{\Gb}{G(\mathfrak{b})}
\newcommand{\Ml}{M(\lambda)}
\newcommand{\Gl}{G(\lambda)}
\newcommand{\Gm}{G(\mu)}
\newcommand{\CB}{\operatorname{CB}}
\newcommand{\Zxy}{\Z \lbrack x^{\pm1},y^{\pm1} \rbrack}
\newcommand{\Zx}{\Z \lbrack x^{\pm 1} \rbrack}
\newcommand{\aid}{\mathfrak{a}}
\newcommand{\bid}{\mathfrak{b}}
\newcommand{\pid}{\mathfrak{p}}
\newcommand{\ok}{\overline{k}}
\newcommand{\ol}{\overline{l}}
\newcommand{\The}{\text{Th}_{\exists}}
\newcommand{\Thu}{\text{Th}_{\forall}}
\newcommand{\ufrak}{\mathfrak{U}}
\newcommand{\Mau}{M(\mathfrak{a}_{\mathfrak{U}})}
\newcommand{\au}{\mathfrak{a}_{\mathfrak{U}}}
\title{Limits of metabelian groups}
\author{Luc Guyot}
\date{\today}
\subjclass[2000]{Primary 20E18, 20F70, 13F20, 11R04}
\keywords{space of marked groups, patch topology, algebraic numbers, Baumslag-Solitar groups, Krull dimension}
\begin{document}

\begin{abstract}
We describe the two-generated limits of abelian-by-(infinite cyclic) groups in the
space of marked groups using number theoretic methods. We also discuss universal equivalence of these limits.
\end{abstract}

\maketitle

\section{Introduction} \label{SecIntro}

We consider the class $\Cu$ of two-generated groups of the form $R \rtimes \Z$ where $R$ is an integral domain and $\Z$ acts on $R$ by multiplication of a unit. The class $\Cu$ contains for instance the lamplighter groups: $$(\Z/p\Z) \wr \Z=(\Z/p\Z)\br{x^{\pm1}} \rtimes_x \Z$$ for $p=0$ or $p$ prime and the soluble Baumslag-Solitar groups: $$BS(1,n)=\Z\br{1/n} \rtimes_n \Z,\, n \in \Z \setminus \{0\}.$$

Using results from algebraic number theory, we describe in this paper the closure of $\Cu$ in $\M$, the space of marked groups on two generators. Finitely generated groups with the same universal theory as free groups (resp. non-abelian free metabelian groups) are known to coincide with limits of free groups (resp. torsion-free lamplighter groups) in the space of finitely generated marked groups. Moreover, these limit groups were exhaustively listed for small ranks \cite{FGM+98,Chap97}. We are aiming for similar results for one-relator groups such as the Baumslag-Solitar groups and some of their natural generalizations.

A convenient way to define the space $\M$ is to fix a free group $F$ with basis $(a,b)$ and to consider the set of normal subgroups $N$ of $F$ endowed with the topology induced by the product topology of $2^{F}=\{0,1\}^{F}$ with discrete factors $\{0,1\}$. We will prefer to think of an element $N$ of $\M$ as a group $G$ marked by an ordered generating pair, namely the group $F/N$ marked by the image of $(a,b)$ via the quotient map. This way we will benefit from a tractable parametrization of the generating pairs of groups in $\Cu$ (Section \ref{SubSecGP}). 

 Let us give further definitions to state our main theorems. For a given $2$-generated group $G$, we denote by $\br{G}$ 
the set of all marked groups in $\M$ which are abstractly isomorphic to $G$. We denote by $\overline{\br{G}}$ the topological 
closure of $\br{G}$ in $\M$. For $\lambda \in \C \setminus \{0\}$, set $$\Gl=\Z\br{\lambda^{\pm 1}} \rtimes \Z$$ where $\Z$ 
acts by multiplication of $\lambda$ on $\Z\br{\lambda^{\pm 1}}$. 
We denote by $P_{\lambda}(x) \in \Z \br{x}$ the minimal polynomial of $\lambda$ over $\Z$ and set $P_{\lambda}(x)=0$ if $\lambda$ is transcendant. 
Observe that $G(\lambda)$ is isomorphic to $\Z \wr \Z$ in the latter case. Let $M$ be the free metabelian group on two generators, i.e. 
the quotient of $F$ by its second derived subgroup. Then $M$ fits in an extension 
$$1 \longrightarrow M'=\br{M,M} \longrightarrow M \longrightarrow \Z^2 \longrightarrow 1$$ where $M'$ is the derived subgroup of $M$. The conjugacy action of $\Z^2$ on $M'$ turns it into a module over the integral group ring of $\Z^2$. The latter ring is identified with the ring of Laurent polynomials $L=\Zxy$ and $M'$ is accordingly identified with $L$ by the Magnus Embedding Theorem (see \cite{Moc86} or \cite[Corollary 1.10]{Gup87}), with the commutator $\br{a,b}=aba^{-1}b^{-1}$ corresponding to the the unit polynomial $1 \in L$. It follows that any ideal $\aid$ of $L$ can be identified with a subgroup of $M'$ and it is indeed a normal subgroup of $M$, so that we can define
$$M(\aid) \doteqdot M/\aid.$$ For Laurent polynomials $P_1(x,y),P_2(x,y)\dots,$ denote by $(P_1(x,y),P_2(x,y),\dots)$ the ideal they generate in $L$ and put $ M(P_1(x,y),P_2(x,y),\dots) \doteqdot M(\aid)$ where $\aid=(P_1(x,y),P_2(x,y),\dots)$.
We denote by $\Phi_m(x) \in \Z \br{x}$ the $m$-th cyclotomic polynomial over the rationals.

\begin{Atheorem}[Prop. \ref{PropRoot} and Th. \ref{ThConv}] \label{ThA}
If $\lambda \in \C\setminus \{0,1\}$ is a primitive $m$-th root of unity then
$$\overline{\br{\Gl}}=\br{\Gl} \cup \br{M(\Phi_m(x),y-1)},$$
else
$$\overline{\br{\Gl}}=\br{\Gl} \cup \bigcup_{k>0} \br{M(P_{\lambda^k}(x))} \cup \br{M}.$$
\end{Atheorem}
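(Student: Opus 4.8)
The plan is to establish both inclusions defining the closure $\overline{\br{\Gl}}$, splitting according to whether $\lambda$ is a root of unity and, throughout, translating the topology of $\M$ into the behaviour of the module relations carried by a marking. I would first record that every group occurring in $\overline{\br{\Gl}}$ is $2$-generated and metabelian, these being closed conditions in $\M$, so by the Magnus Embedding such a limit, marked by the images of $(a,b)$, is either isomorphic to $\Gl$ or equal to $M(\aid)$ for an ideal $\aid$ of $L=\Zxy$, according as its abelianization has rank $1$ or $2$. Using the parametrization of generating pairs of the groups in $\Cu$ from Section~\ref{SubSecGP}, I would encode a marking of $\Gl$ by the integers $p,q$ through which its two generators act on the base $\Z\br{\lambda^{\pm 1}}$ as multiplication by $\lambda^{p}$ and $\lambda^{q}$, together with the remaining arithmetic data. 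The key observation is that this marked group is the quotient of $M(\aid_{p,q})$, where $\aid_{p,q}=\{R\in L : R(\lambda^{p},\lambda^{q})=0\}$, by an extra \emph{arithmetic} relation reflecting that $\Gl^{ab}$ has rank $1$; convergence of markings then reduces to convergence of the ideals $\aid_{p,q}$ in the patch topology together with control of whether this extra relation survives.

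For the inclusion $\supseteq$ I would exhibit explicit sequences. Fixing $k>0$ and marking $\Gl$ so that the first generator acts by $\lambda^{k}$ and the second by $\lambda^{q_N}$ with $q_N\to\infty$, the arithmetic relation has length tending to infinity and hence escapes, so the limit equals $M(\lim_N \aid_{k,q_N})$. A fixed $R=\sum_j R_j(x)y^j$ lies in the patch-limit iff $R(\lambda^{k},\lambda^{q_N})=0$ for all large $N$; since $\lambda$ is not a root of unity the powers $\lambda^{q_N}$ are pairwise distinct, so a vanishing argument for the univariate polynomial $\sum_j R_j(\lambda^{k})T^{j}$ forces every $R_j(\lambda^{k})=0$, i.e. $R\in(P_{\lambda^k}(x))$. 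Thus the sequence converges to $M(P_{\lambda^k}(x))$. Letting both exponents escape kills even $P_\lambda$ and yields the zero ideal, hence the limit $M$. When $\lambda$ is a primitive $m$-th root of unity the powers are periodic, so I would instead drive the second generator's exponent through multiples of $m$, making it act trivially while its image in the torsion of the abelianization acquires unbounded order; the arithmetic relation again escapes and the surviving module relations converge to $(\Phi_m(x),y-1)$, giving $M(\Phi_m(x),y-1)$.

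The inclusion $\subseteq$ is the substantial direction and is where the number theory is decisive. Given any convergent sequence of markings, I would analyse the accumulation of the exponent pairs $(p_N,q_N)$ and the patch-limit of the ideals $\aid_{p_N,q_N}$, distinguishing whether the arithmetic relation survives, forcing a limit isomorphic to $\Gl$, or escapes, forcing a limit of the form $M(\aid)$. If $\lambda$ is not a root of unity the powers $\lambda^{n}$ are pairwise distinct, and the same Vandermonde-type argument shows that the only ideals arising as patch-limits are $(P_{\lambda^k}(x))$, when exactly one exponent stabilizes at some $k>0$ while the other escapes, and the zero ideal, giving $M$, when both escape; the sign $k>0$ is pinned down by the orientation of the quotient map $\Gl\to\Z$. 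If $\lambda$ is a primitive $m$-th root of unity, periodicity prevents either exponent from escaping in value, so a relation constraining $y$ to a root of unity always persists; the only genuinely new limit arises when the abelianization torsion escapes, which forces the second generator to act trivially, and the persistent relations are exactly $(\Phi_m(x),y-1)$.

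I expect the main obstacle to be this last inclusion: proving that no ideal outside the stated list can occur, uniformly over all convergent sequences of markings, while simultaneously bookkeeping when the arithmetic relation survives. This rests on two points I would treat with care. First, the parametrization of generating pairs depends on understanding the unit group $\Z\br{\lambda^{\pm 1}}^{\times}$, through Dirichlet's unit theorem and the arithmetic of $\Z\br{\lambda^{\pm 1}}$, which controls exactly which exponent pairs and re-markings are available. Second, one must show that the patch-limit of the relation ideals of any convergent sequence is forced to be principal of the form $(P_{\lambda^k}(x))$ or zero, respectively $(\Phi_m(x),y-1)$ in the root case; in the latter case one must separately identify the apparent candidates $M(\Phi_m(x),y-\zeta)$ with $\zeta\neq1$ as re-markings of $\Gl$ and prove that $y$ must become trivial. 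This uniform control of the limiting ideals is exactly what the patch-topology and Krull-dimension methods are designed to supply.
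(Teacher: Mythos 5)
Your overall skeleton --- parametrizing markings by the exponent pairs $(k_n,l_n)=\sigma(S_n)$, converting convergence in $\M$ into patch convergence of the relation ideals, and splitting into cases according to the behaviour of the exponents --- is the same as the paper's (Lemma \ref{LemConvAu}, Proposition \ref{PropRoot}, Theorem \ref{ThConv}), and your Vandermonde argument for the case ``one exponent frozen at $k$, the other escaping'' is exactly the paper's proof of the corresponding case of Theorem \ref{ThConv}.$ii$. But there is a genuine gap in the case where \emph{both} exponents escape, which is precisely the point the paper's number-theoretic apparatus exists to handle. First, your dichotomy is false as stated: take $k_n=n$ and $l_n=n+1$; both exponents escape, yet $P_{\lambda}(x^{-1}y)$ vanishes at $(\lambda^{k_n},\lambda^{l_n})$ for every $n$, so the limit ideal contains $P_{\lambda}(x^{-1}y)$ and the limit is $M(P_{\lambda}(x^{-1}y))$ --- a Nielsen transform of $M(P_{\lambda}(x))$ --- and not $M$. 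In general one must distinguish, as Theorem \ref{ThConv} does, whether some relation $\alpha k_n+\beta l_n=\gamma$ holds eventually (condition $(2)$, limit $M(P_{\lambda^{\gamma}}(x^{\alpha}y^{\beta}))$) or no such relation holds (condition $(3)$, limit $M$); your classification by ``which exponent stabilizes'' misses all the relations with $\alpha\beta\neq 0$.

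Second, and more seriously, in the genuinely independent case you give no argument that the limit ideal is zero, and no Vandermonde-type argument can give one: what is needed is that for every nonzero $W\in\Zxy$ one has $W(\lambda^{k_n},\lambda^{l_n})\neq 0$ for all large $n$, i.e. that the points $(\lambda^{k_n},\lambda^{l_n})$ eventually avoid every curve. This is a statement about multiplicative independence of powers of $\lambda$, not about one variable running through infinitely many distinct values while the other is fixed. The paper obtains it from Weil's theorem (Lemma \ref{LemNonRoot}), which supplies an absolute value with $|\lambda|\neq 1$, combined with the domination/induction criteria for subsequence-freeness (Lemmas \ref{LemAbs}, \ref{LemFi}, \ref{LemDom}) packaged in Proposition \ref{PropCrit}. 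Your proposal instead points to ``patch-topology and Krull-dimension methods'' and to Dirichlet's unit theorem; neither supplies this step --- the height bounds of \cite{CN03} enter only in Theorem \ref{ThLimMa}, and they can at best bound the height of a limit ideal, never show that it vanishes. The same gap infects your $\subseteq$ direction, where you additionally omit the subsequence/discreteness argument (Lemma \ref{LemDisc} together with Proposition \ref{PropDelta}.$iii$) needed to show that every convergent sequence of markings falls into one of the cases $(1)$, $(2)$, $(3)$ in the first place.
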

An analogous result holds if we replace $\C$ by an arbitrary field of characteristic $p>0$ (Proposition \ref{PropRoot}).
The next results are sharper when $p=0$, so the introduction restricts to the class $\Co$ of two-generated groups $R \rtimes \Z$ with $R$ an integral domain of characteristic zero. The class $\Co$ was studied by Breuillard in the context of the slow growth of soluble groups and was related to the Lehmer conjecture in the following way: the conjecture holds if the exponential growth rates of non-virtually nilpotent groups in $\mathcal{C}_0$ are uniformly bounded away from $1$ \cite[Section 7]{Bre07}.
Recall that a non-zero algebraic number $\lambda$ is \emph{reciprocal} if $\lambda^{-1}$ is a conjugate of $\lambda$.
\begin{Atheorem}[Th. \ref{ThLimMa}] \label{ThB} Let $G$ be a $2$-generated group. 
\begin{itemize}
\item[$(i)$]
 The group $G$ is an accumulation point of $\mathcal{C}_0$ in $\M$ 
if and only if $G$ is isomorphic to $M(\aid)$ for some  prime ideal $\aid \subset \Zxy$ of the form $(P(x,y))$ or $(\Phi_m(x),\, y-1)$.
\item[$(ii)$]
The group $G$ is the limit of a sequence $(G(\lambda_n))$, where each $\lambda_n$ is a root of unity, if and only if $G$ is isomorphic to one of the groups: $M,\,\Z \wr \Z,\,M(\Phi_m(x)),\,M(\Phi_m(x),y-1) \quad (m \ge 1).$ In this case we call $G$ a $\Phi$-limit.
\item[$(iii)$] The group $G$ is the limit of a sequence $(G(\lambda_n))$, where each $\lambda_n$ is a reciprocal algebraic number, if and only if $G$ is either a $\Phi$-limit or it is isomorphic to $M(P(x,y))$ for some irreducible reciprocal polynomial $P$.
\end{itemize}
\end{Atheorem}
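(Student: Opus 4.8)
The plan is to reduce everything to a statement about limits of \emph{evaluation ideals} of the Laurent ring $L = \Zxy$, and then to feed in Theorem~\ref{ThA} (Prop.~\ref{PropRoot} and Th.~\ref{ThConv}) together with the parametrisation of generating pairs from Section~\ref{SubSecGP}. First I would record the marked picture: every group of $\Co$, marked by a generating pair, is a quotient $M/N$ of the free metabelian group, and by the generating-pair parametrisation the module part is $N \cap M' = \mathfrak{p}_{(\lambda^a,\lambda^b)} := \ker\bigl(L \to \C,\ x \mapsto \lambda^a,\ y \mapsto \lambda^b\bigr)$ for some coprime pair $(a,b)$, while the rest of $N$ only records the torsion of the abelianisation. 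The crucial structural feature to extract here is that the two evaluation coordinates $\lambda^a,\lambda^b$ are powers of one number, hence \emph{multiplicatively dependent}; this single constraint is what ultimately cuts the list of possible limits down to the two families in $(i)$.

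Next, since being metabelian is a closed condition, any accumulation point $G$ of $\Co$ is again of the form $M/N$; using that the topology of $\M$ is pointwise convergence of normal subgroups, I would show that $\mathfrak{a} := N \cap M'$ is the limit of the evaluation ideals $\mathfrak{p}_{(\alpha_n,\beta_n)}$, that $\mathfrak{a}$ is prime, and that for a genuine accumulation point the abelianisation relation escapes to infinity, forcing $G^{\mathrm{ab}} = \Z^2$ and hence $N = \mathfrak{a}$, i.e. $G \cong M(\mathfrak{a})$. It then remains to classify the primes that arise as such limits. Here the multiplicative-dependence constraint enters: a limit of evaluation ideals at multiplicatively dependent points is either a height-one prime $(P(x,y))$, obtained when the points are Zariski-generic on the curve $V(P)$ and leave every finite set, or a zero-dimensional prime which, up to the $\GL_2(\Z)$-action on $L$, is a cyclotomic point $(\Phi_m(x),\, y-1)$ coming from the within-class limits already produced by Theorem~\ref{ThA}; all other height-two primes are ruled out. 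This gives the ``only if'' direction of $(i)$, and in particular shows that each marked $G(\lambda)$ is isolated in $\Co$.

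For the converse in $(i)$ I would realise each admissible prime explicitly. Given an irreducible $P$, I intersect the curve $V(P)$ with the infinitely many subgroup curves $\{x^i y^j = 1\}$, $\gcd(i,j)=1$: each intersection point $(\alpha,\beta)$ has multiplicatively dependent coordinates, so $\alpha = \lambda^j$, $\beta = \lambda^{-i}$ for an algebraic $\lambda$, and thus determines a marked copy of $G(\lambda) \in \Co$ whose module ideal is $\mathfrak{p}_{(\alpha,\beta)}$; choosing the points distinct and escaping gives $\mathfrak{p}_{(\alpha_n,\beta_n)} \to (P)$ and, after arranging the abelianisation relations to escape, $G(\lambda_n) \to M(P)$. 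The zero-dimensional primes $(\Phi_m(x), y-1)$ are supplied directly by Theorem~\ref{ThA}. Finally, parts $(ii)$ and $(iii)$ follow by specialising this machine: restricting the $\lambda_n$ to roots of unity forces every evaluation point to be a pair of roots of unity, so by the classification the only limits are $M$, $\Z \wr \Z$, $M(\Phi_m(x))$ and $M(\Phi_m(x), y-1)$, each of which is realised; restricting to reciprocal $\lambda_n$ makes the symmetry $(x,y) \mapsto (x^{-1}, y^{-1})$ survive in the limit, yielding exactly the irreducible reciprocal $P$, while conversely every such $P$ carries enough reciprocal multiplicatively dependent points.

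I expect the main obstacle to be the number-theoretic core of the second and third steps: proving that the only limits of evaluation ideals at multiplicatively dependent points are the two stated families, and, in the other direction, that every irreducible (resp. reciprocal) $P$ carries infinitely many \emph{distinct} multiplicatively dependent points whose ideals genuinely converge to $(P)$ rather than clustering at a lower-dimensional prime. This is precisely where heights on $\Z[\lambda^{\pm 1}]$, the arithmetic of reciprocal polynomials, and the passage from zero-dimensional point-primes to the one-dimensional curve-prime (the Krull-dimension bookkeeping) must be made quantitative.
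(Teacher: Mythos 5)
Your skeleton is essentially the paper's: the passage from convergence of marked groups in $\Co$ to patch convergence of the evaluation ideals $\mathfrak{p}_{(\lambda^{k},\lambda^{l})}$ is Lemma \ref{LemConvAu}; the dichotomy ``stationary ideal sequence versus accumulating ideal sequence'', with the height-drop principle \cite[Corollary 3.17]{CN03} handling the accumulating case and the reduction of the stationary case to a single $\lambda$ (Lemmas \ref{LemSta} and \ref{LemConj}, then Proposition \ref{PropRoot} and Theorem \ref{ThConv}) handling the other, is exactly how Theorem \ref{ThLimMa}$(ii)$ is proved; and your realization of $(P(x,y))$ by intersecting $V(P)$ with the subtori $\{x^iy^j=1\}$ is the paper's construction in Theorem \ref{ThLimMa}$(iii)$--$(iv)$. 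One point deserves real credit: your observation for $(iii)$ that the evaluation ideals attached to a reciprocal $\lambda_n$ are invariant under $\iota\colon(x,y)\mapsto(x^{-1},y^{-1})$, that this invariance passes to patch limits, and that an $\iota$-invariant principal prime of $\Zxy$ has a reciprocal generator, correctly proves the ``only if'' half of $(iii)$ (granted part $(i)$) and is simpler than the paper's appeal to \cite[Section 6, Lemma 11]{Sch00}.

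The genuine gap is the arithmetic input, and it sits exactly where you write ``so by the classification'' in part $(ii)$. Your part $(i)$ classification allows \emph{every} irreducible $P$ as a height-one limit, so it does not exclude a sequence $(G(\zeta_n))$, with $\zeta_n$ roots of unity, converging to $M(P(x,y))$ for some $P$ not of the form $\pm x^{\alpha}y^{\beta}\Phi_m(x^{k}y^{l})$. Ruling this out is not formal bookkeeping: such a convergence would place infinitely many torsion points of $(\C^{\times})^{2}$ on the curve $V(P)$, and the statement that a curve containing infinitely many torsion points is a torsion coset of a subtorus is Lang's theorem (Ihara--Serre--Tate), which the paper imports in the quantitative form of Schinzel's lemma on lacunary polynomials \cite[Section 5, Lemma 6]{Sch00}, finishing with a Nielsen move and the constraints $\gcd(k_n,l_n)=1$, $\deg\zeta_n\to\infty$. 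This input cannot be derived from part $(i)$, and your outline never identifies it. The same theorem is hidden in your realizability step for $(i)$: to know that the intersections $V(P)\cap\{x^iy^j=1\}$ supply infinitely many pairwise non-conjugate \emph{non-torsion} points (so that the evaluation ideals are pairwise distinct, hence accumulate, hence converge to a prime of height at most one containing $P$, which must be $(P)$), you must rule out that almost all of these intersection points are torsion -- again Lang/Schinzel, which the paper takes from \cite[Section 4, Corollary 1 and Theorem 69]{Sch00}. Finally, the ``if'' half of $(iii)$ is also not automatic: $P(t^{j},t^{-i})$ is a reciprocal one-variable polynomial, but a root of a reciprocal polynomial need not be a reciprocal \emph{number} (an irreducible non-reciprocal factor can simply be paired with its reflection), so ``every such $P$ carries enough reciprocal multiplicatively dependent points'' needs the Schinzel-based construction rather than the symmetry heuristic. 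In short: correct architecture, one genuine simplification in $(iii)$, but the torsion-point theorem carrying $(ii)$ and both realizability claims is missing, and it is a qualitatively new ingredient, not a quantitative refinement of your part $(i)$.
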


Given a subset $X \subset \C \setminus \{0\}$, we are also able to give a pointwise description of the closure of $\br{\Gl: \lambda \in X}$ in $\M$ provided that the set of algebraic units of $X$ is manageable. To be more precise, we use the following definitions.
The subset $X$ is said to be \emph{tame} if either $X$ is finite or $$\overline{\br{\Gl: \lambda \in X}}=\mathcal{L}(X) \cup \br{\Z \wr \Z}$$ with $\mathcal{L}(X)=\bigcup_{\lambda \in X} \overline{\br{\G}}$. It means that the limit points of $\br{\Gl: \lambda \in X}$ are the obvious ones, i.e. the limit points given by Theorem \ref{ThA}, plus the marked groups abstractly isomorphic to $\Z \wr \Z$ if $X$ is moreover infinite. Let $\lambda$ be an algebraic integer with conjugates $\lambda_1=\lambda,\lambda_2,\dots,\lambda_n$. Define
$\begin{array}{|c|} \hline \lambda \end{array} =\max_{1 \le i \le n}|\lambda_i|$ where $|\cdot|$ is the complex modulus. The algebraic integer $\lambda$ is an \emph{algebraic unit} if $\lambda \neq 0$ and $\lambda^{-1}$ is an algebraic integer.

\begin{Atheorem}[Th. \ref{ThTame} and Cor. \ref{CorBounded}] \label{ThC}
Let $X \subset \C \setminus \{0\}$. If there is $\epsilon>0$ such that $\begin{array}{|c|} \hline \lambda \end{array}>1+\epsilon$ for every algebraic unit $\lambda \in X$, then $X$ is tame. Therefore, any set of non-zero algebraic numbers with bounded degrees is a tame subset of $\C \setminus \{0\}$.
\end{Atheorem}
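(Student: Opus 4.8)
The plan is to prove the implication of Theorem~\ref{ThTame} (the house gap forces tameness) and then to extract Corollary~\ref{CorBounded} from it by a Northcott-type finiteness argument. Throughout I may assume $X$ is infinite, a finite set being tame by definition. The inclusion $\mathcal{L}(X)\cup\br{\Z\wr\Z}\subseteq\ov{\br{\Gl:\lambda\in X}}$ is the routine half: each $\ov{\br{\Gl}}$ lies in the closure because $\br{\Gl}\subseteq\br{\Gl:\lambda\in X}$, and $\Z\wr\Z$ appears as a limit because, $X$ being infinite, I can exhibit a genuinely varying sequence whose defining relations escape (established below). The whole content is therefore the reverse inclusion, so I fix a convergent sequence $G(\lambda_n)\to G$ with the $\lambda_n\in X$ pairwise distinct and aim to show $G\cong\Z\wr\Z$ or $G\in\mathcal{L}(X)$. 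There are two cases: either $G\cong\Gm$ for some $\mu\in X$, whence $G\in\mathcal{L}(X)$ and we are done; or $G$ is a genuine accumulation point of $\Co$, in which case Theorem~\ref{ThLimMa} yields $G\cong M(\aid)$ for a prime ideal $\aid\subset\Zxy$ of the form $(P(x,y))$ or $(\Phi_m(x),y-1)$.

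Next I would translate convergence into arithmetic using the parametrization of generating pairs of $\Gl$ from Section~\ref{SubSecGP} together with the convergence criterion of Theorem~\ref{ThConv}. A marking of $\Gl$ attaches to it an ideal $\aid_\lambda\subset\Zxy$ generated by $y-1$ and the minimal polynomial $P_\lambda(x)$ in the standard marking, and, more generally, under the marking replacing the action generator by its $k$-th power, by a relation carried by the algebraic number $\lambda^k$. Convergence $G(\lambda_n)\to M(\aid)$ then says that $\aid$ is the local limit of the ideals $\aid_n$ attached to suitable markings of $G(\lambda_n)$: a fixed Laurent polynomial lies in $\aid$ exactly when it eventually lies in $\aid_n$. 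The two invariants governing this local limit are the degree of the relation-carrier $\lambda_n^{k_n}$ and the size of its conjugates, that is, its house.

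The heart of the argument is then an escape principle. Since $y-1\in\aid_n$ for every $n$, the limit ideal is proper and distinct from $(y-1)$ only if some nonzero relation in the variable $x$ survives, which forces the minimal polynomial of the relation-carrier $\lambda_n^{k_n}$ to have bounded degree and bounded coefficients along a subsequence; by Vieta's formulas the latter bounds its house. When $k_n>0$, the house of $\lambda_n^{k_n}$ is the $k_n$-th power of the house of $\lambda_n$, hence at least $(1+\epsilon)^{k_n}$, so a surviving relation forces $k_n$ bounded; then bounded degree together with infinitely many distinct $\lambda_n$ blows up the coefficients, there being only finitely many integer polynomials of bounded degree and bounded coefficients. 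For non-units, a nonunit leading coefficient or a conjugate off the unit circle blows up the coefficients under powers and under the variation of $\lambda_n$. In every such case the $x$-relation escapes, $\aid=(y-1)$, and $G\cong M(y-1)\cong\Z\wr\Z$, which also supplies the realization of $\Z\wr\Z$ needed above. I expect the main obstacle to be the sign of $k_n$: for $k_n<0$ the relevant quantity is the house of $\lambda_n^{-1}$ raised to $|k_n|$, which the gap on the house of $\lambda_n$ does not directly control. I plan to dispatch this by two complementary observations: for bounded degree $d$ the product formula forces $\min_i|\lambda_{n,i}|\le(1+\epsilon)^{-1/(d-1)}$, giving uniform escape for both signs; and in general by showing, via the generating-pair combinatorics, that a surviving two-variable relation forces $\lambda_n$ to be reciprocal (compatibly with Theorem~\ref{ThLimMa}(iii)), whose conjugate set is stable under inversion, so that the house of $\lambda_n^{-1}$ again exceeds $1+\epsilon$ and escape is restored. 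Making this dichotomy rigorous — and thereby neutralizing the Lehmer-type regime of high-degree units whose conjugates crowd the unit circle — is the crux, and it is exactly where Kronecker's theorem (a unit of house one is a root of unity) and the careful accounting of symmetric functions of conjugates enter.

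Finally I would deduce Corollary~\ref{CorBounded}. If every element of $X$ has degree at most $d$, then $X$ contains only finitely many roots of unity, and by Northcott's theorem only finitely many algebraic integers of degree at most $d$ whose house is at most $2$. Removing this finite set $X_0$ alters neither $\mathcal{L}$ nor the set of accumulation points, since deleting finitely many marked groups leaves the limit points unchanged, while every algebraic unit of $X\setminus X_0$ now has house exceeding $2=1+1$. Thus Theorem~\ref{ThTame} applies to $X\setminus X_0$ with $\epsilon=1$, and tameness of $X\setminus X_0$ yields tameness of $X$.
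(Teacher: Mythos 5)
Your reduction collapses at the point where you translate convergence into arithmetic. You assert that the ideal $\aid_n$ attached to an arbitrary marking of $G(\lambda_n)$ contains $y-1$, so that a limit ideal other than $(y-1)$ must contain a nonzero relation in the variable $x$ alone, and your entire ``escape principle'' (houses of powers, Vieta, finiteness of integer polynomials of bounded degree and height) lives inside this one-variable picture. This premise is false: markings of $G(\lambda_n)$ are parametrized, up to $\Aut(G(\lambda_n))$, by coprime pairs $(k_n,l_n)=\sigma(S_n)$, and the associated ideal is the kernel of $x \mapsto \lambda_n^{k_n},\ y \mapsto \lambda_n^{l_n}$ (Subsection \ref{SubSecFrom}, Lemma \ref{LemConvAu}); it contains $y-1$ only when $l_n=0$, which forces $k_n=\pm 1$. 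The premise is even inconsistent with Theorem \ref{ThA}, which you invoke: for $\lambda$ not a root of unity, $\overline{\br{\Gl}}$ contains $M(P_{\lambda^k}(x))$ and $M$ itself, whose defining ideals do not contain $y-1$, so if $y-1$ lay in every $\aid_n$ no such limit could occur. The genuinely hard part of Theorem \ref{ThTame} is exactly the case your premise assumes away: a two-variable relation $P(\lambda_n^{k_n},\lambda_n^{l_n})=0$ surviving while $|k_n|+|l_n|\to\infty$, which is the precise mechanism behind the non-tame examples of Theorem \ref{ThLimMa}$(iv)$. You do flag this case, together with the sign problem for negative exponents and high-degree units, as ``the crux'', but what you offer there is a plan, not an argument: the claim that a surviving two-variable relation forces the $\lambda_n$ to be reciprocal is unproven, and it is not what Schinzel's lemma (as used in Theorem \ref{ThLimMa}$(iv)(2)$) gives --- that lemma runs in the opposite direction, deducing reciprocity of the limit polynomial from reciprocity of the $\lambda_n$. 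Even granted, it would not restore ``escape'', since reciprocal units can perfectly well satisfy the house gap; and your product-formula fix is confined to bounded degree, hence to the Corollary, while the Theorem allows unbounded degrees.

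What the paper does instead, and what has no substitute in your proposal, is Proposition \ref{PropCrit}, the convergence criterion built on the freeness lemmas \ref{LemAbs}, \ref{LemFi} and \ref{LemDom}: after reducing (via Lemma \ref{LemConvAu} and Theorem \ref{ThConv}) to pairwise non-conjugate algebraic $\lambda_n$ with $|k_n|+|l_n|\to\infty$, one chooses for each $n$ a single absolute value $|\cdot|_n$ on $\Q(\lambda_n)$ with $|\lambda_n|_n$ bounded away from $1$ --- a normalized non-archimedean one when $\lambda_n$ is not an algebraic unit, and an archimedean one realizing the house when it is a unit, which is the only place the hypothesis on $X$ enters --- and the freeness machinery then shows that no fixed Laurent polynomial vanishes at $(\lambda_n^{k_n},\lambda_n^{l_n})$ for infinitely many $n$, for either sign of the exponents, so the limit is $M\in\mathcal{L}(X)$. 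This is what handles the two-variable relations and the negative-exponent regime uniformly, and it is absent from your argument. By contrast, your deduction of Corollary \ref{CorBounded} from Theorem \ref{ThTame} --- Northcott plus removal of a finite subset, using that tameness is preserved under finite unions --- is a correct and slightly more elementary alternative to the paper's appeal to Schinzel--Zassenhaus; but it rests on the Theorem, which your proposal does not establish.
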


The set of limits associated to a tame subset of $\C \setminus \{0\}$ has a fairly simple topology which can be described by means of the Cantor-Bendixson rank (Remark \ref{RemTop}).

Finally, we characterize the $2$-generated groups with the same universal theory as $\Gl$. The computation of the Krull dimension of these groups, in the sense of Myasnikov and Romanovskiy, follows immediatly. The latter concept originates from algebraic geometry over groups where special normal subgroups with regards to discrimination replace the prime ideals of the classical ring theoretic setting \cite{BMR99}.
With the notations of Theorem \ref{ThA}, we set $M(\lambda)=M(\Phi_m(x),y-1)$ if $\lambda$ is a primitive $m$-th root of unity and $M(\lambda)=M(P_{\lambda}(x))$ else.

\begin{Atheorem}[Th. \ref{ThUniv} and Cor. \ref{CorKrull}] \label{ThD}
Let $\lambda \in \C \setminus \{0,1\}$ and let $G$ be a non-abelian $2$-generated group. The group $G$ has the same universal theory as $\Gl$ if and only if $G$ is isomorphic to $\Gl$ or $\Ml$.
Consequently, the Krull dimension of $\Ml$ is $1$.
\end{Atheorem}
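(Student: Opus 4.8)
The plan is to establish the universal-equivalence statement (Theorem~\ref{ThUniv}) and then read off the Krull dimension as a formal consequence. Throughout I use the standard model-theoretic dictionary that for finitely generated $H,K$ one has $H \models \Thu(K)$ precisely when $H$ embeds into an ultrapower $K^{\mathcal{U}}$; so the equality $\Thu(G)=\Thu(\Gl)$ is the assertion that $G$ and $\Gl$ mutually embed into ultrapowers of one another. For the \emph{if} direction I would prove the two inclusions separately. The inclusion $\Thu(\Gl)\subseteq\Thu(\Ml)$ is immediate from Theorem~\ref{ThA}: in both the root-of-unity and the generic case $\Ml$ occurs in $\overline{\br{\Gl}}$, so a marking of $\Ml$ is a limit of markings of $\Gl$; the diagonal map of $F$ into a non-principal ultraproduct of copies of $\Gl$ then has kernel exactly the defining relations of $\Ml$, whence $\Ml\hookrightarrow \Gl^{\mathcal{U}}$. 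For the reverse inclusion I would exhibit $\Gl$ as a genuine subgroup of $\Ml$: inside $\Ml=M(P_{\lambda}(x))$ (resp. $M(\Phi_m(x),y-1)$) the subgroup generated by the image of $a$ and by the commutator $\br{a,b}$, which under the Magnus identification is the constant polynomial $1$, is $\langle a\rangle$ acting by multiplication by $\lambda$ on the submodule $\Z\br{\lambda^{\pm1}}\cdot 1$, hence isomorphic to $\Gl$. Since universal sentences are inherited by substructures, $\Gl\le\Ml$ forces $\Thu(\Ml)\subseteq\Thu(\Gl)$, and equality follows.

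For the \emph{only if} direction, assume $G$ is non-abelian, $2$-generated, with $\Thu(G)=\Thu(\Gl)$, and exploit the two inclusions in opposite ways. From $\Thu(\Gl)\subseteq\Thu(G)$ we get $G\hookrightarrow\Gl^{\mathcal{U}}$; projecting to coordinates, every finite set of non-trivial elements of $G$ survives a homomorphism into $\Gl$ whose image is a finitely generated, hence abelian-by-cyclic, subgroup. Thus $G$ is discriminated by the class $\Co$, so a marking of $G$ lies in $\overline{\Co}$, and Theorem~\ref{ThB}$(i)$ gives that $G\cong M(\aid)$ for a prime ideal $\aid$ of the form $(P(x,y))$ or $(\Phi_m(x),y-1)$, or else $G\in\Co$. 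The opposite inclusion $\Thu(G)\subseteq\Thu(\Gl)$, i.e. $\Gl\hookrightarrow G^{\mathcal{U}}$, is then used to pin down the arithmetic: it recovers the minimal polynomial $P_{\lambda}$ from the existential theory of $\Gl$ and forces $\aid=(P_{\lambda}(x))$ or $(\Phi_m(x),y-1)$, so $G\cong\Ml$, while the only member of $\Co$ compatible with the full theory is $\Gl$ itself. In particular the competing limits listed in Theorem~\ref{ThA} — the free metabelian group $M$ and the groups $M(P_{\lambda^k}(x))$ with $k>1$ — are eliminated, each being universally equivalent (by the \emph{if} direction) to some $G(\lambda^k)$ whose universal theory differs from that of $\Gl$.

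The main obstacle is precisely the last step: showing that the reverse inclusion is rigid enough to single out $\lambda$ among its powers and among unrelated algebraic numbers. Concretely one must produce, for each wrong candidate, an existential sentence satisfied in $\Gl$ but not in it — which amounts to characterizing universal equivalence of the groups $G(\mu)$ in terms of the arithmetic of $\mu$, namely conjugacy of minimal polynomials together with the reciprocal phenomenon governed by Theorem~\ref{ThB}$(iii)$ (here $\lambda^{-1}$ being a conjugate of $\lambda$). Translating this number-theoretic data into separating equations over the rings $\Z\br{\lambda^{\pm1}}$, and checking that no spurious coincidence of universal theories occurs, is the delicate part and is where the algebraic number theory of the paper is indispensable.

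Finally, the Krull-dimension statement (Corollary~\ref{CorKrull}) follows immediately, as indicated by the word ``consequently''. The Myasnikov–Romanovskiy Krull dimension is an invariant of the universal theory: universally equivalent coordinate groups define the same poset of irreducible algebraic sets and hence have equal dimension. Having shown $\Thu(\Ml)=\Thu(\Gl)$, we conclude $\dim\Ml=\dim\Gl$. It then remains to compute $\dim\Gl=1$, which one reads off directly from the structure of $\Gl=\Z\br{\lambda^{\pm1}}\rtimes\Z$ as an abelian-by-cyclic group over the one-dimensional domain $\Z\br{\lambda^{\pm1}}$ (an order in the number field $\Q(\lambda)$): the chain of primes $0\subsetneq\pid$ of this ring translates into a maximal chain of irreducible algebraic sets over $\Gl$ of length one. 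Hence $\dim\Ml=1$.
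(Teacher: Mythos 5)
Your \emph{if} direction is sound and is essentially the paper's: $\Gl$ embeds into $\Ml$ as the subgroup $\langle a,\br{a,b}\rangle$ (this is Lemma \ref{LemGlandMl}), and the inclusion $\Thu(\Gl)\subseteq\Thu(\Ml)$ follows from $\Ml\in\overline{\br{\Gl}}$ together with \cite{ChGu05}. The \emph{only if} direction, however, has a genuine gap, and you name it yourself: the elimination of the wrong candidates, which you call ``the main obstacle'' and ``the delicate part'', is precisely the mathematical content of the paper's proof, and you do not supply it. Moreover, your route makes this step strictly harder than it needs to be. By passing from ``$(G,S)$ is a limit of marked $2$-generated subgroups of $\Gl$'' to the weaker statement ``$(G,S)\in\overline{\Co}$'' and invoking Theorem \ref{ThB}$(i)$, you discard the crucial information that the approximating groups are $G(\lambda^{k_n})$ --- powers of one fixed $\lambda$. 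The paper keeps this information: since non-abelian $2$-generated subgroups of $\Gl$ are exactly the $G(\lambda^{k})$, the set $X=\{\lambda^{k_n}\}$ has degree over $\Q$ bounded by $\deg\lambda$, hence is tame by Corollary \ref{CorBounded}, so the limit $G$ can only be $G(\lambda^k)$, $M(\lambda^k)$, $M$ or $\Z\wr\Z$; then Lemma \ref{LemThGl} --- proved via the explicit existential sentence $\exists x\,\exists y\,\exists z\,(\br{\br{x,y},z}\neq 1 \wedge \br{x,y}^{P_{\lambda}(z)}=1)$ and the arithmetic Lemma \ref{LemConj} --- together with Lemma \ref{LemGlandMl} eliminates everything except $\Gl$ and $\Ml$. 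On your larger candidate list sit all the groups $M(P(x,y))$ with $P$ an irreducible genuinely two-variable polynomial, and no analogue of Lemma \ref{LemThGl} is available to dispose of them; that is exactly the missing argument.

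The Krull-dimension part is also flawed. You assert that the Krull dimension is an invariant of the universal theory and conclude $\dim\Ml=\dim\Gl=1$. Under the paper's definition (the longest chain $G=Q_0\twoheadrightarrow Q_1\twoheadrightarrow\dots\twoheadrightarrow Q_k$ of proper quotients, each with the same universal theory as $G$), this invariance principle is false, and the present theorem is itself a counterexample: by Theorem \ref{ThUniv} any proper quotient of $\Gl$ with the same universal theory would be isomorphic to $\Gl$ (excluded since $\Gl$ is Hopfian) or to $\Ml$ (excluded because composing $\Gl\twoheadrightarrow\Ml\stackrel{\pi}{\twoheadrightarrow}\Gl$ would again contradict Hopficity), so $\Gl$ has Krull dimension $0$, not $1$, while $\Ml$ has Krull dimension $1$. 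Your computation of $\dim\Gl$ from the chain of primes of $\Z\br{\lambda^{\pm1}}$ does not translate into group quotients preserving the universal theory. The paper's actual proof of Corollary \ref{CorKrull} exhibits the chain $\Ml\stackrel{\pi}{\twoheadrightarrow}\Gl$, where $\pi$ has non-trivial kernel by Lemma \ref{LemDisc}.$iii$, giving the lower bound $1$, and then uses Theorem \ref{ThUniv} together with Hopficity of both $\Ml$ and $\Gl$ to show that no longer chain exists; that argument cannot be replaced by an appeal to universal equivalence.
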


Theorem \ref{ThD} can be viewed as generalization of \cite[Corollary 5.3]{Chap97} stating that $M$ and $\Z \wr \Z$ are, up to abstract isomorphism, the only $2$-generated groups with the same universal theory as a non-abelian free metabelian group.

The paper is organized as follows. Section \ref{SecNotAndBack} collects notations, definitions and elementary facts that we will use without further notice. Section \ref{SecLimits} is devoted to the proofs of Theorems \ref{ThA},\ref{ThB} and \ref{ThC}. First, we relate convergence of groups in $\Cu$ to convergence of prime ideals with respect to the patch topology (Subsection \ref{SubSecFrom}). Second, we establish a convergence criterion based on absolute values (Subsection \ref{SubSecCrit}). This criterion is crucial in the proofs of Theorem \ref{ThA} (Subsection \ref{SubSecSingle}) and Theorem \ref{ThC} (Subsection \ref{SubSecTame}). We prove Theorem \ref{ThB}, using Theorem \ref{ThA} and results of Schinzel on cyclotomic factors of lacunary polynomials (Subsection \ref{SubSecClosure}). Section \ref{SecUniv} is devoted to the proof of Theorem \ref{ThD}, which relies on Theorem \ref{ThC} and results of Champetier and Guirardel tying logic and convergence in the space of marked groups.

\section{Notations and background} \label{SecNotAndBack}

\subsection{Orbits of generating tuples and the space of marked groups} \label{SubSecOrbits}

Let $G$ be a group generated by $n$ elements and let $F_n=F(x_1,\dots,x_n)$ be the free group with basis $(x_1,\dots,x_n)$. Let $\V(G,n)$ be the set of \emph{generating $n$-vectors} of $G$, i.e. the set of $n$-tuples generating $G$.
We consider two natural actions on $\V(G,n)$. The first one is the diagonal left action of $\Aut(G)$ on  $\V(G,n)$ defined by $$\phi \cdot (g_1,\dots,g_n)=(\phi(g_1),\dots,\phi(g_n))$$ The second one is the right action of $\Aut(F_n)$, the group of automorphisms of $F_n$, through Nielsen transformations. For $\psi \in \Aut(F_n)$, let $w_i(x_1,\dots,x_n)=\psi^{-1}(x_i)$, with $i=1,\,2,\dots,n$. For $S=(g_1,\dots,g_n) \in \V(G,n)$ the action of $\psi$ on $S$ is defined by $$S \cdot \psi=(w_1(g_1,\dots,g_n),\dots,w_n(g_1,\dots,g_n))$$

 There is a convenient way to describe these two actions. Observe first that 
$\operatorname{V}(G,n)$ identifies with the set $\operatorname{Epi}(F_n,G)$ 
of surjective homomorphisms from $F_n$ onto $G$ through $S \mapsto p_S$ where $p_S=p_{(g_1,\dots,g_n)}$ is defined by $p_S(x_i)=g_i$.
The correponding actions read $$\phi \cdot p=\phi \circ p$$ for $\phi \in \Aut(G)$ and $$p \cdot \psi=p \circ \psi$$ for $\psi \in \Aut(F_n)$, which makes clear that the right action is well-defined and commutes with the left one. 
The set $\br{G}_n$ of $\Aut(G)$-orbits of generating $n$-tuples corresponds bijectively to the set of \emph{$G$-defining normal subgroups of $F_n$}, i.e., the normal subgroups $N \lhd F_n$ such that $F_n/N$ is isomorphic to $G$. It also corresponds to the set of \emph{$n$-generated marked groups} isomorphic to $G$. A $n$-generated marked group $(G,S)$ is a group $G$ marked by a generating $n$-tuple $S \in G^n$, defined up to isomorphisms respecting the markings. The set $\mathcal{M}_n$ of $n$-generated marked groups has a natural topology \cite{Cham00,ChGu05}. Here is the commonly used definition:

\begin{definition}
The Chabauty topology on $\Mn$ is the topology induced by the product topology of $2^{F_n}$ on $\{N \lhd F_n\}$ through the above identification. 
\end{definition}
The space $\Mn$ is then a totally discontinuous compact metrizable topological space.
It was first introduced by Gromov and Grigorchuk in the context of growth \cite{Grom81,Gri84}. (The general idea of topologizing sets of groups goes back to Mahlers and Chabauty \cite{Mah46,Chab50}; the interested reader should consult \cite{Har08} for a thorough account.) It was further used to prove both existence and abundance of groups with exotic properties \cite{Ste96, Cham00} and turned to be a remarkably suited framework for the study of Sela's \emph{limit groups} \cite{ChGu05}.
Groups whose isomorphism classes accummulate on free groups were studied by Ol'shanskii and Sapir \cite{OS09} in connection with percolation theory \cite{BNP09}. Limits of Thompson's group $F$ were investigated in \cite{Zar07} and free limits were exhibited in \cite{AST09,Bri09}. Limits of Baumslag-Solitar groups endowed with their standard generating pairs were studied in \cite{Sta06a,GS08}.

The isolated points in the space of marked groups were investigated in \cite{CGP07} and an isolated group was used by de Cornulier to answer a question of Gromov  concerning the existence of sofic groups which do not arise as limits of amenable groups \cite{Cor09b}. Neithertheless, very little is known on the topological type of $\Mn$ and so far the subspace of metabelian groups focus most attention \cite{Cor09a,CG10}. The box-counting dimension of the space of marked groups on $n$ generators is infinite if $n \ge 2$ \cite{Guy07} and its Haudorff dimension is non-zero \cite{GS2}.

 The countable compact space $\Rn$ of marked rings on $n$ invertible generators is defined similarly, considering the so-called \emph{patch topology} 
on the set of ideals of $L_n=\Z \br{x_1^{\pm1},\dots,x_n^{\pm1}}$, i.e. the topology induced by the product topology of $2^{L_n}$. This topology was investigated
 in \cite{Hoc69,HK91,CN03} and is related to the study of the subspace of metabelian groups in $\Mn$ \cite{Cor09,Cor09a}. 

One may keep in mind the
following characterization of convergent sequences \cite[Pr. 1]{Sta06a}.

\begin{lemma}\label{lmecv}
Let $(G_i,S_i)$ (resp. $(R_i,\Sigma_i)$) be a sequence of marked groups in $\Mn$ (resp. of marked rings in $\Rn$). The sequence $(G_i,S_i)$ (resp. $(R_i,\Sigma_i)$) converges if and only if for any $w \in F_n$ (resp. $W \in L_n$), we have either $w(S_i) =1$ in $G_i$ (resp. $W(\Lambda_i)=0$ in $R_i$) for
$i$ large enough, or $w(S_i)\neq 1$ in $G_i$ (resp. $W(\Lambda_i) \neq 0$ in $R_i$) for $i$ large enough.
\end{lemma}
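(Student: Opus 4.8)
The plan is to unwind the definition of the Chabauty topology as the product topology on $2^{F_n}$ and to observe that the condition in the statement is precisely coordinatewise convergence. First I would record the dictionary: a marked group $(G_i,S_i)$ corresponds to the normal subgroup $N_i=\ker p_{S_i}\lhd F_n$, viewed as the point $\mathbf 1_{N_i}\in 2^{F_n}$, and for $w\in F_n$ one has $w(S_i)=1$ in $G_i$ if and only if $w\in N_i$, i.e. if and only if the $w$-coordinate $\mathbf 1_{N_i}(w)$ equals $1$. Under this dictionary the two alternatives in the statement say exactly that the $w$-coordinate of $\mathbf 1_{N_i}$ is eventually constant, for every $w$.

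Next I would prove the equivalence at the level of the ambient space: a sequence $(N_i)$ converges in $2^{F_n}$ if and only if each coordinate $\mathbf 1_{N_i}(w)$ is eventually constant. The forward implication is immediate, since every coordinate projection $\pi_w\colon 2^{F_n}\to\{0,1\}$ is continuous onto a discrete space, so $\pi_w(\mathbf 1_{N_i})$ must eventually equal $\pi_w$ of the limit. For the converse, if every coordinate stabilizes I would let $N=\{w\in F_n : \mathbf 1_{N_i}(w)=1 \text{ for } i \text{ large}\}$ and check $\mathbf 1_{N_i}\to \mathbf 1_N$: any basic open neighbourhood of $\mathbf 1_N$ constrains only finitely many coordinates, all of which agree with those of $\mathbf 1_{N_i}$ once $i$ is large, so $\mathbf 1_{N_i}$ eventually lies in that neighbourhood.

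The one point requiring genuine care — and the step I expect to be the crux — is that this coordinatewise limit $N$ again lies in $\Mn$, that is, that $N$ is a normal subgroup of $F_n$; otherwise the sequence would not converge in $\Mn$ itself. I would argue that $\Mn$ is closed in $2^{F_n}$, because being a subgroup, being closed under inversion, and being normal are each intersections, over all pairs resp. all elements, of conditions involving only finitely many coordinates, hence closed conditions. Equivalently and more concretely, I would verify directly that $N$ is a normal subgroup, using that the coordinates at $uv$, $u^{-1}$ and $wuw^{-1}$ are eventually constant: if $u,v\in N$ then $u,v\in N_i$ for $i$ large, whence $uv\in N_i$ and $u^{-1}\in N_i$ for $i$ large, so $uv,u^{-1}\in N$, and likewise $wuw^{-1}\in N$ for every $w\in F_n$. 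Combining the three steps settles the group case.

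Finally, the ring case is formally identical: a marked ring $(R_i,\Sigma_i)$ corresponds to an ideal of $L_n$ regarded as a point of $2^{L_n}$, the vanishing condition $W(\Sigma_i)=0$ translates into membership in that ideal, and $\Rn$ is closed in $2^{L_n}$ because being an ideal — closure under addition and under multiplication by each fixed element of $L_n$ — is again cut out by conditions on finitely many coordinates. The same two-step argument, coordinatewise convergence in $2^{L_n}$ together with stability of the limit inside $\Rn$, then yields the ring statement and completes the proof.
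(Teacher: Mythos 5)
Your proof is correct and complete. The paper does not actually prove this lemma --- it is quoted directly from \cite[Pr.~1]{Sta06a} --- and your argument (convergence in the product topology of $2^{F_n}$, resp.\ $2^{L_n}$, is coordinatewise eventual stabilization, together with closedness of the set of normal subgroups, resp.\ ideals, in that product) is exactly the standard proof behind the citation; in particular you correctly isolated the one genuine point of care, namely that the coordinatewise limit again lies in $\Mn$, resp.\ $\Rn$, so that convergence in the ambient product space implies convergence in the subspace itself.
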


It is important to not that the abstract isomorphism class $\br{G}_n$ of $G$ in $\Mn$ needs not to be closed, as shows for instance the following convergence $\lim_{i \to \infty} (\Z, (1,i,\dots,i^{n-1}))=(\Z^n, (e_j)_{1 \le j \le n}) \in \Mn$ \cite{ChGu05} where $(e_j)_{1 \le j \le n}$ stands for the canonical basis of $\Z^n$.
Let us make a brief digression on the problem of understanding closed isomorphism classes, looking first at groups with finitely many isomorphic copies in $\mathcal{M}_n$ (i.e. $\br{G}_n$ is finite), or equivalently, with finitely many $\Aut(G)$-orbits of generating $n$-tuples. If $G$ is finite, then it has clearly finitely many $\Aut(G)$-orbits of generating $n$-tuples for every $n \ge d(G)$, the minimal number of generators of $G$. Let $G$ be the $n$-generated (relative) free group of a given variety of groups. Assume moreover that $G$ is Hopfian, i.e. $G$ is not isomorphic to any of its proper quotient. It is then immediate that $G$ has only one $\Aut(G)$-orbit of generating $n$-tuples. In particular, we have $\br{\Z^2}_2=\{\Z^2\}$ and $\br{M}_2=\{M\}$.
\begin{question}
Let $G$ be finitely generated group such that $\br{G}_n$ is finite. Does $G$ contains a relative free group as a subgroup of finite index?
\end{question}
For every $(G,S) \in \Mn$ (resp. $N \lhd F_n$) and every $\psi \in \Aut(F_n)$, define $(G,S) \cdot \psi=(G,S \cdot \psi)$ (resp. $\psi \cdot N=\psi(N)$).
The right action of $\Aut(F_n)$ on $\Mn$ is continuous \cite{Cham00} with kernel $\Inn(F_n)$ \cite{Lub80}. There is an analoguous right action of $\GL(n,\Z)$ on $\Rn$ which is continuous and faithful. In order to describe this action, we introduce the following notations. For $\alpha=(\alpha_1,\dots,\alpha_n)\in \Z^n$ and $A \in \GL_n(\Z)$, set $x^{\alpha}=x_1^{\alpha_1} \cdots x_n^{\alpha_n},\,A \cdot x^{\alpha}=x^{A \alpha }$ and extend the action linearly to $L_n$. Given $A \in \GL_n(\Z)$, set $W_i(x_1,\dots,x_n)=A \cdot x_i$ for $i=1,\dots,n$. Let $R$ be a ring generated by $n$ of its invertible elements and let 
$\Lambda=(\lambda_1,\dots,\lambda_n) \in (R^{\times})^{n}$ a generating $n$-tuple. For 
$(R,\Lambda) \in \Rn$, define $$(R,\Lambda) \cdot A =(R, \Lambda \cdot A)$$ with $\Lambda \cdot A=(W_1(\lambda_1,\dots,\lambda_n),\dots,W_n(\lambda_1,\dots,\lambda_n))$. Let $\aid$ be an ideal of $L_n$, $\psi \in \Aut(F_n)$ and let $A(\psi) \in \GL_n(\Z)$ be the automorphism of $\Z^n$ induced by $\psi$. We set $$\begin{array}{c}
\psi \cdot \aid=A(\psi) \cdot \aid,\\
(R,\Lambda) \cdot \psi=(R,\Lambda) \cdot A(\psi).\end{array}$$

 Note that left multiplication by $\psi$ defines a ring automorphism 
of $L_n$, and hence preserves irreducibility of polynomials, as well as primeness and heights of ideals in $L_n$. Observe also that right multiplication by $\psi$ preserves the group isomomorphism relation on $\Mn$ and the ring isomorphism relation on $\Rn$.

Let us consider $(G,S) \in \mathcal{M}_m$ and $(H,T) \in \Mn$. If $G$ and $H$ are abstractly isomorphic, then there is neighborhood $U$ of $(G,S)$ in $\mathcal{M}_m$, a neighborhood $V$ of $(H,T)$ in $\Mn$ and an homeomorphism $f:U \longrightarrow V$ preserving the isomorphism relation such that $f(G,S)=(H,T)$ \cite[Lemma 1]{CGP07}. Hence, for any set $X$ of abstract isomorphism classes, the property of being a limit of groups in $X$ does not depend on the marking. This is the reason why Theorem \ref{ThB} is formulated without referring to any marking.

\subsection{Generating pairs} \label{SubSecGenPairs}
Let $A$ be an abelian group, $C$ an infinite cyclic group generated by $a$ and $G$ the central term of an exact sequence $$1 \longrightarrow A \longrightarrow G \stackrel{\sigma}{\longrightarrow} C  \longrightarrow 1.$$
The action of $C$ on $A$ by conjugacy extends linearily to $\Z \br{C}$, turning $A$ into a $\Z \br{C}$-module. Let $R$ be the quotient of $\Z\br{C}$ by the annihilator of $A$. If $G$ is $2$-generated, it is easy to check that $A$ is a cyclic $\Z\br{C}$-module. In this case, let $b$ be a generator of $A$ and let $\lambda$ be the image of $a$ by the natural map from $C$ into $R$. We write then 
$G=R \rtimes \Z$, identifying $b$ with the unity of $R$ and $a$ with the unity of $\Z$. We assume that $\lambda \neq 1$, i.e. $G$ is not abelian. For $k,k' \in \Z$, $g,h \in G$ with $g=(r,k)$ and $g'=(r',k')$, define
$$
 \delta(k)=\frac{\lambda^k-1}{\lambda-1},\quad
 \Delta(g,h)=r' \delta(k)-r \delta(k'),\quad
 \sigma(g,h)=(k,k').
$$ 
Let $k,k'$ be coprime integers. As $\sigma(S) \cdot \psi=\sigma(S \cdot \psi)$ for every $S \in G^2,\,\psi \in \Aut(F_2)$ and $\sigma(a,b)=(1,0)$, there exists a generating pair $S$ of $G$ such that $\sigma(S)=(k,k')$.  

Determining limits of marked groups in $\Cu$ will be facilitated by:
\begin{proposition}\cite[Proposition 1]{Guy1} \label{PropDelta}
 Let $G=R \rtimes \Z$ be a $2$-generated abelian-by-(infinite cyclic) group. Let $S \in G^2$ and let $S',S''$ be two generating pairs of $G$.
\begin{itemize}
 \item[$(i)$] The pair $S$ generates $G$,
if and only if $\sigma(S)$ is a pair of coprime integers and $\Delta(S)$ is a unit of $R$.
\item[$(ii)$] If $\sigma(S')=\sigma(S'')$ then there is an automorphism of $G$ such that $\phi \cdot S'=S''$.
\item[$(iii)$] Let $\lambda \in \C \setminus \{0,1\}$. If there is an automorphism of $G=\Gl$ such that $\phi \cdot S'=S''$ then $\sigma(S')= \epsilon \sigma(S'')$ with $\epsilon \in \{\pm 1\}$.
\end{itemize}
\end{proposition}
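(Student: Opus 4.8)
The plan is to handle all three parts by using a Nielsen transformation to reduce to the normalized case $\sigma(S)=(1,0)$, where the group structure becomes transparent, and then to track the relevant invariants. First I would record the cocycle identity $\delta(k+k')=\delta(k)+\lambda^k\delta(k')$ and use it to check how $\Delta$ behaves under the elementary Nielsen transformations generating $\Aut(F_2)$: a swap multiplies $\Delta(S)$ by $-1$, the transvection $(g,h)\mapsto(gh,h)$ leaves it fixed, and an inversion multiplies it by $-\lambda^{-k}$. Hence $\Delta(S\cdot\psi)=u_\psi\Delta(S)$ with $u_\psi$ a unit of $R$, so the condition ``$\Delta(S)\in R^\times$'' is Nielsen-invariant, as are coprimality of $\sigma(S)$ and the property that $S$ generates $G$. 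For $(i)$, if $S$ generates then $\sigma$ maps $\langle S\rangle$ onto $\Z$, forcing $\gcd(k,k')=1$; conversely coprimality lets me normalize to $\sigma(S)=(1,0)$, i.e. $g=(r,1),h=(r',0)$, where $\Delta(S)=r'$ and $\langle S\rangle\cap A$ is the additive subgroup generated by the conjugates $g^nhg^{-n}=(\lambda^nr',0)$, namely $r'R$. Thus $\langle S\rangle=G$ iff $r'R=R$ iff $r'=\Delta(S)$ is a unit, which by the invariances settles $(i)$ for arbitrary $\sigma(S)$.

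For $(ii)$ I would apply a common Nielsen transformation to $S'$ and $S''$ to bring their shared value $\sigma(S')=\sigma(S'')$ to $(1,0)$; since the $\Aut(F_2)$-action commutes with the $\Aut(G)$-action, it suffices to produce $\phi$ in the normalized situation $S'=((r_1,1),(r_1',0))$, $S''=((r_2,1),(r_2',0))$ with $r_1',r_2'\in R^\times$ by $(i)$. Two families of automorphisms of $G=R\rtimes\Z$ then suffice: the scaling $\mu_u\colon(r,k)\mapsto(ur,k)$ for $u\in R^\times$, and the shift $\phi_c\colon(r,0)\mapsto(r,0),\ a\mapsto(c,1)$ for $c\in R$. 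A direct check shows $\mu_u$ is an automorphism, and that $\phi_c$ respects the defining relation $a(r,0)a^{-1}=(\lambda r,0)$ (because conjugation by $(c,1)$ sends $(r,0)$ to $(\lambda r,0)$) and is invertible with inverse $\phi_{-c}$. Taking $u=r_2'(r_1')^{-1}$ matches the second entries, after which $\phi_c$ with $c=r_2-ur_1$ fixes $h''$ and moves the first entry to $(r_2,1)$; thus $\phi=\phi_c\circ\mu_u$ realizes $\phi\cdot S'=S''$.

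For $(iii)$ I would reduce the statement to showing that the normal subgroup $A=\ker\sigma$ is characteristic in $G=\Gl$: then every $\phi\in\Aut(G)$ induces an automorphism of $G/A\cong\Z$, i.e. multiplication by some $\epsilon\in\{\pm1\}$, whence $\sigma\circ\phi=\epsilon\sigma$ and $\sigma(S'')=\sigma(\phi\cdot S')=\epsilon\,\sigma(S')$. The characteristicity of $A$ is the crux and forces a case split. When $\lambda$ is not a root of unity I would identify $A$ with the Fitting subgroup: $A$ is abelian normal, while any nilpotent normal subgroup not contained in $A$ contains some $(r,k)$ with $k\neq0$, hence the commutators $[(r,k),(s,0)]=((\lambda^k-1)s,0)$, i.e. all of $(\lambda^k-1)R$, on which $(r,k)$ acts by multiplication by $\lambda^k\neq1$; the resulting lower central terms $(\lambda^k-1)^jR$ never vanish, a contradiction. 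When $\lambda$ is a root of unity this fails (the centre already escapes $A$), so instead I would compute $G^{ab}\cong(R/(\lambda-1)R)\times\Z$ and observe that $R/(\lambda-1)R$ is a finitely generated torsion group, hence finite (as $\lambda\neq1$ makes $\lambda-1$ invertible after tensoring with $\Q$), so $\operatorname{Hom}(G,\Z)\cong\Z$ is generated by $\sigma$; every automorphism therefore sends $\sigma$ to $\pm\sigma$ and preserves $A=\ker\sigma$. The main obstacle is precisely this dichotomy in $(iii)$: neither the Fitting-subgroup description nor the rank-one $\operatorname{Hom}$ computation is valid across all $\lambda\in\C\setminus\{0,1\}$, so the argument must branch on whether $\lambda$ is a root of unity.
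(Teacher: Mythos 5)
Your proof is correct, but there is nothing in the paper to compare it against: the paper does not prove Proposition \ref{PropDelta} at all, it imports it wholesale from the reference \cite[Proposition 1]{Guy1} (a paper listed as ``in preparation''). So your argument stands or falls on its own merits, and it stands. The three computations of how $\Delta$ transforms under the elementary Nielsen moves (sign flip under swap, invariance under $(g,h)\mapsto(gh,h)$ via the cocycle identity $\delta(k+k')=\delta(k)+\lambda^k\delta(k')$, multiplication by the unit $-\lambda^{-k}$ under inversion) are all correct, and they justify the normalization to $\sigma(S)=(1,0)$, where $\Delta(S)=r'$ and $\langle S\rangle\cap A=r'R$ gives $(i)$. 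The two families $\mu_u$ and $\phi_c$ in $(ii)$ are genuine automorphisms (your verification that conjugation by $(c,1)$ acts on $A$ as multiplication by $\lambda$ is exactly what is needed for $\phi_c$ to be well defined on the semidirect product), and they visibly act transitively on generating pairs with $\sigma$-value $(1,0)$. In $(iii)$ the dichotomy you identify is real and both branches are sound: when $\lambda$ is not a root of unity (including $\lambda$ transcendental), $(\lambda^k-1)^jR\neq 0$ in the integral domain $\Z\br{\lambda^{\pm1}}\subset\C$, so the lower central series of a normal nilpotent subgroup meeting $G\setminus A$ never terminates and $A$ is the Fitting subgroup, hence characteristic; when $\lambda\neq 1$ is a root of unity, $\Z\br{\lambda^{\pm1}}/(\lambda-1)$ is a finitely generated torsion abelian group, hence finite, so $\operatorname{Hom}(G,\Z)\cong\Z$ is generated by $\sigma$ and every automorphism carries $\sigma$ to $\pm\sigma$. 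Two cosmetic points only: the unit relating $\Delta(S\cdot\psi)$ to $\Delta(S)$ depends on $S$ as well as $\psi$ (harmless, since only ``differs by a unit'' is used), and the normalization step tacitly uses that the images of Nielsen transformations exhaust $\GL_2(\Z)$, which acts transitively on coprime pairs; both are standard and do not affect the proof.
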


\begin{remark} \label{RemRec}
We note that $\Gl$ is isomorphic to $G(\lambda^{- 1})$ as a marked group if and only if $\lambda$ is either a reciprocal algebraic number or a transcendant number over the rationals. For such a $\lambda$, the $\epsilon$ of assertion $(iii)$ takes its two possible values. Else $\epsilon$ must be one.
\end{remark}

Understanding the isomorphism relation on the closure of $\Cu$ in $\M$ will be eased by:

\begin{proposition}  \cite[Theorem 2.4]{Bru74} and \cite[Theorem 4.10]{Dun64} (see also \cite[Proposition 2]{Guy1}) \label{PropTsys}
 Let $G$ be a $2$-generated group. Assume that $G$ is either abelian-by-(infinite cyclic) or metabelian with abelianization isomorphic to $\Z^2$.
Then $\Aut(F_2)$ acts transitively on $\br{G}_2$.
\end{proposition}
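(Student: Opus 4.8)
The plan is to reduce the transitivity statement to a question about the \emph{abelianised} generating pair, exploiting that the abelianisation map $\Aut(F_2)\to\GL(2,\Z)$ is onto (Nielsen); since the right action of $\Aut(F_2)$ on $\M$ has kernel $\Inn(F_2)$, this action even factors through $\Aut(F_2)/\Inn(F_2)\cong\GL(2,\Z)$. In each of the two cases I would single out a surjection of $G$ onto a free abelian group through which generating pairs can be normalised: the projection $\sigma\colon G\to\Z$ in the abelian-by-(infinite cyclic) case, and the abelianisation $G\to G^{ab}\cong\Z^2$ in the metabelian case. The common mechanism is that $\GL(2,\Z)$ acts transitively on the relevant images of generating pairs (primitive vectors, respectively bases), so that after applying a suitable $\psi\in\Aut(F_2)$ I may assume two given generating pairs have the same image downstairs; it then remains to produce an automorphism of $G$ matching them.

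In the abelian-by-(infinite cyclic) case this last step would be immediate from the cited Proposition~\ref{PropDelta}. Given generating pairs $S,S''$, Proposition~\ref{PropDelta}$(i)$ ensures that $\sigma(S)$ and $\sigma(S'')$ are primitive; since $\GL(2,\Z)$ is transitive on primitive vectors and $\sigma(S\cdot\psi)=\sigma(S)\cdot A(\psi)$, I would choose $\psi$ with $\sigma(S\cdot\psi)=\sigma(S'')$. Proposition~\ref{PropDelta}$(ii)$ then supplies $\phi\in\Aut(G)$ with $\phi\cdot(S\cdot\psi)=S''$, so $S$ and $S''$ lie in one $\Aut(F_2)$-orbit of $\br{G}_2$.

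In the metabelian case with $G^{ab}\cong\Z^2$ I would first identify $G$ with some $\Ma$. As $G$ is $2$-generated metabelian it is a quotient $M/K$ of the free metabelian group, and the hypothesis $G^{ab}\cong\Z^2$ forces the projection $M\to G$ to induce an isomorphism on abelianisations, whence $K\subseteq M'$. Identifying $M'$ with $L$ by the Magnus embedding, $K$ becomes an $L$-submodule, i.e.\ an ideal $\aid$, so $G\cong\Ma$ and $G'\cong L/\aid$ is a cyclic $L$-module. It then suffices to show that every generating pair is $\Aut(F_2)\times\Aut(G)$-equivalent to the distinguished pair $(\ov a,\ov b)$ of images of the free generators. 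Given a pair $S$, I would choose $\psi$ so that $p_S\circ\psi$ induces the identity on $\Z^2$ (possible since $\Aut(F_2)\to\GL(2,\Z)$ is onto); this epimorphism factors as $q\colon M\to\Ma$ inducing the identity on abelianisations, so $\ker q\subseteq M'$ and the restriction $q'\colon M'=L\to L/\aid$ is an $L$-linear surjection. Writing $g=q([a,b])=[q(a),q(b)]$, the element $g$ generates $G'$ as an $L$-module, because the derived subgroup of a $2$-generated metabelian group is the cyclic module generated by the commutator of the generators. Hence multiplication by any lift $u\in L$ of $g$ is a surjective endomorphism of the finitely generated $L$-module $L/\aid$, and therefore bijective by the determinant trick; thus $u$ is a non-zero-divisor modulo $\aid$, $\ker q'=(\aid:u)=\aid$, and so $\ker q=\aid=\ker\pi$ where $\pi\colon M\to\Ma$ is the canonical projection. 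The induced isomorphism $\phi=q\circ\pi^{-1}\in\Aut(G)$ would then carry $(\ov a,\ov b)$ onto $S\cdot\psi$, completing the reduction.

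The main obstacle will be the core module-theoretic input of the metabelian case: the identification of $G'$ with the cyclic $L$-module $L/\aid$ and the verification that $q([a,b])$ generates it. This rests on the Magnus embedding together with the commutator calculus of metabelian groups (the Fox-derivative description of the restriction of $q$ to $M'$), and it is precisely the content of the cited results of Bruno and Dunwoody. Once this is in place, the passage from ``surjective'' to ``bijective'' is the standard fact about finitely generated modules over a commutative ring, and everything else is the bookkeeping reduction to the abelianised pair.
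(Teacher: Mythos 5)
You cannot really be compared against an internal argument here, because the paper offers none: Proposition \ref{PropTsys} is imported wholesale from Brunner, Dunwoody and the companion paper \cite{Guy1}. Your proposal is a correct, essentially self-contained substitute for those citations. The abelian-by-(infinite cyclic) half is exactly the reduction the paper's own framework invites: the equivariance $\sigma(S\cdot\psi)=\sigma(S)\cdot\psi$, surjectivity of $\Aut(F_2)\to\GL(2,\Z)$ and transitivity of $\GL(2,\Z)$ on coprime pairs reduce everything to Proposition \ref{PropDelta}$(i)$--$(ii)$. The metabelian half genuinely replaces the Brunner--Dunwoody theorems: the Hopfian property of $\Z^2$ gives $\ker q\subseteq M'$, the Magnus embedding turns $\ker q$ into an ideal, your normalization that $p_S\circ\psi$ induce the identity on abelianizations is precisely what makes $q|_{M'}$ an $L$-module map, and surjectivity then forces the class $g$ of $q(\br{a,b})$ to be a unit of $L/\aid$, so $\ker q=\aid=\ker\pi$. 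Two simplifications you missed in your own favor: since $L/\aid$ is a cyclic module, surjectivity of multiplication by $g$ directly produces an inverse of $g$, so the determinant trick is unnecessary; and the fact that $\br{x,y}$ normally generates the derived subgroup of any group generated by $x,y$ is a two-line computation, not Fox calculus. Hence the ``main obstacle'' you defer to the literature in your closing paragraph is already closed by your own argument; nothing beyond Proposition \ref{PropDelta}, Nielsen's theorem and the Magnus embedding is needed.

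The one real (if small) gap is a boundary case. The paper defines $\Delta$ and states Proposition \ref{PropDelta} under the standing convention $\lambda\neq 1$, i.e. for non-abelian $G$, while the proposition you are proving also covers the abelian groups $\Z$ and $\Z\times\Z/n\Z$, which are $2$-generated and abelian-by-(infinite cyclic); your Case 1 quotes Proposition \ref{PropDelta} for them without comment (only $\Z^2$ is safe, being caught by your Case 2). The patch is classical: for abelian $G$, two epimorphisms $\Z^2\to G$ have kernels in the same $\GL(2,\Z)$-orbit by Smith normal form, which gives transitivity directly. You should treat this case separately rather than let the non-abelian convention leak through Proposition \ref{PropDelta}.
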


\subsection{Rings, polynomials and absolute values} \label{SubSecRings}
We only consider commutative rings with unity. For any such ring $R$, we denote by $R^{\times}$ its group of units. For $p$ prime or $p=0$, we denote by $\Z_p$ the ring $\Z/p\Z$ of intergers modulo $p$ (thus $\Z_0=\Z$) and by $\Z_p \br{x_1^{\pm 1},\dots,x_n^{\pm 1}}$ the ring of Laurent polynomials over $n$ variables with coefficient in $\Z_p$. In section \ref{SecLimits}. We will use the fact that $L_n=\Z \br{x_1^{\pm 1},\dots,x_n^{\pm 1}}$ is an Hilbert ring of Krull dimension $n+1$, i.e. each of its prime ideals is an intersection of maximal ideals \cite[Theorem 4.19]{Eis95} and the maximal length of an ascending chain of prime ideals is $n+1$. A polynomial $P(x_1,\dots,x_n)$ is said to be \emph{reciprocal} if $P(x_1^{-1},\dots,x_n^{-1})=\pm x_1^{\alpha_1}\dots x_n^{\alpha_n} P(x_1,\dots,x_n)$ for some $\alpha_1,\dots,\alpha_n \in \Z$.
Let $R \subset R'$ be rings. An element $\lambda \in R'$ is said to be \emph{algebraic over $R$} if there is a polynomial $P$ with coefficients in $R$ such that $P(\lambda)=0$ holds in $R'$.  Else $\lambda$ is \emph{transcendant over $R$}. Assume $R=\Z \subset \C=R'$ and let $\lambda \in \C$ be algebraic over $\Z$. The \emph{minimal polynomial of $\lambda$ over $\Z$} is the unique irreducible polynomial $P_{\lambda}(x) \in \Z\br{x}$ vanishing at $\lambda$ whose leading coefficient is positive. The algebraic number $\lambda$ is an \emph{algebraic integer} (resp. \emph{reciprocal algebraic number}) if $P_{\lambda}(x)$ is monic (resp. reciprocal). If both $\lambda$ and $\lambda^{-1}$ are algebraic integers, then $\lambda$ is an \emph{algebraic unit}. 

Let $K=\Q(\lambda)$ be a number field. For every prime ideal $\pid$ of the ring $\mathcal{O}(K)$ of algebraic integers of $K$, define the \emph{non-archimedean normalized absolute value} $|\cdot|_{\pid}$ by $|x|_{\pid}=(\text{N}\pid)^{-\text{ord}_{\pid}(x)}$ if $x\neq 0$  and $|0|_{\pid}=0$, where $\text{N}\pid$ denotes the number of elements $\mathcal{O}(K)/\pid$ and $\text{ord}_{\pid}(x)$ denotes the order of $x$ at $\pid$. Recall that $|x|_{\pid}=1$ holds for all prime ideal $\pid$ if and only if $x$ is an algebraic unit, i.e. $x \in \mathcal{O}(K)^{\times}$ (see \cite[Chapters I and II]{Lan94}).

For every prime $p$, we fix a transcendental extension $K_p$ of an algebraic closure of the field with $p$ elements. We set $K_0=\C$. 

\subsection{Cantor-Bendixson rank} \label{SubSecCB}
We deal mainly with countable compact spaces, namely the space of metablian groups on two generators and the spaces $\Rn\, (n=1,2)$. Such spaces can be entirely described by means of the \emph{Cantor-Bendixson rank} \cite{MS20}. Let us give its definition. If $X$ is a
topological space, define its derived subspace $X^{(1)}$ as the subset of accumulation
points in $X$. Iterating over ordinals
$$X^{(\alpha+1)} = (X^{(\alpha)})^{(1)},\, X^{(\alpha)} = \bigcap_{\beta<\alpha} X^{(\beta)} \text{ for limit } \alpha,$$
we have a non-increasing family $X^{(\alpha)}$ of closed subsets. The the Cantor-Bendixson rank $\CB(X)$ of $X$ is the least $\alpha$ such that
$X^{(\alpha)} = X^{(\alpha+1)}$.
Moreover if $x \in X$, we write
$$\CB_X(x) = \sup\{\alpha : x \in X^{(\alpha)}\}$$
if this supremum exists, in which case it is a maximum.
For instance, the Cantor-Bendixson rank of the space of metabelian marked groups on $n$ generators is $\omega^n \cdot(n-1)+1$ \cite[Theorem 1.4]{Cor09a}, where $\omega=(\N,\le)$ is the first infinite ordinal. The space $\operatorname{Spec}(L_n) \subset \Rn$ of prime ideals has Cantor-Bendixson rank $n+2$ \cite[Corollary 3.23]{CN03}. Moreover, for every prime ideal $\pid \subset L_n$ we have 
$$
\CB_{\operatorname{Spec}(L_n)}(\pid)=h(\pid)
$$
where $h(\pid)$ denotes the \emph{height of $\pid$}, i.e. the maximal length of an ascending chain of prime ideals ending at $\pid$.

\subsection{Ultrafilters} \label{SubSecUltra}
An ultrafilter $\ufrak$ on $\N$ is a collection of subsets of $\N$ such that:
\begin{itemize}
 \item  for all $I,J \in \ufrak$, we have $I \cap J \in \ufrak$,
\item for all $I \in \ufrak, J \subset \N$, if $I \subset J$ then $J \in \ufrak$,
\item for all $I \subset \N$, either $I \in \ufrak$ or $\N \setminus I \in \ufrak$, but not both.
\end{itemize}
 An ultrafilter is said to be \emph{free} if it is not \emph{principal}, i.e. if it is not of the form $\{A \subset \N: n\ \in A\}$ for some $n \in \N$.
We use free ultrafilters on $\N$ to pick arbitrary limit points of a countable subset $\{x_n : n \in \N\}$ of a metric compact set $X$ (for us $X$ stands either for $\br{0,\infty}$, $\Mn$ or $\Rn$.). 
Fix indeed a free ultrafilter $\ufrak$ on $\N$. Then, for every sequence $(x_n) \in X^{\N}$, there is a unique $x=\lim_\ufrak x_n \in X$ which satisfies:
$$\forall \epsilon>0,\exists I \in \ufrak,\, \forall n \in I,\,(d(x_n,x)<\epsilon).$$
Conversely, if $x$ is the limit of some subsequence of $(x_n)$, then we can find a free ultrafilter $\ufrak$ on $\N$ such that $\lim_{\ufrak}x_n=x$. This is a consequence of the following fact. For every infinite subset $I \subset \N$, there is a free ultrafilter on $\N$ containing $I$ \cite[Corollary 3.5 and proof of Lemma 3.8]{BeSl69}.

\subsection{Universal theory} \label{SubSecUniv}

A universal sentence is a formula of type $$\forall x_1 \, \forall x_2 \, \dots\, \forall x_n \, \varphi(x_1,\dots,x_n)$$ where $\varphi$ is a quantifier-free formula built up from the usual logic symbols and group operations using every variable $x_1,\dots,x_n$ \cite{CK90}. Note for instance that a group is metabelian if and only if it satisfies the universal sentence $$\forall x \forall y \forall z \forall t (\br{\br{x,y},\br{z,t}}=1)$$ where $\br{x,y}=xyx^{-1}y^{-1}$.
 The universal theory $\Thu(G)$ of a group $G$ is the set of universal sentences satisfied by $G$. The existential theory $\The(G)$ is defined similarly. 
Since an existential sentence is the negation of a universal sentence, the universal theories of two groups coincide if and only if their existential theories coincide. For any universal sentence $\varphi$, the set of marked groups of $\Mn$ in which $\varphi$ is true is a closed subset \cite[Proposition 5.2]{ChGu05}. In particular, the subset of all metabelian groups of $\Mn$ is compact. It is also countable as it identifies with the set normal subgroups of the free metabelian group on $n$ generators and the latter group satisfies the maximal condition on normal subgroups \cite{Hal54}. Thus the closure of $\Cu$ in $\M$ is a countable compact set consisting of metabelian groups only.
\subsection{Group presentations} \label{SubSecGP}
Let $G$ be a metabelian group generated by $S=(g_1,\dots,g_n)$ and let $R \subset F_n$. We say that $\PresM{x_1,\dots,x_n}{r=1: r \in R}$ is a \emph{presentation of $G$ in the variety of metabelian groups} if the kernel of the homomorphism $\pi_S: F_n \longrightarrow G$ induced by $x_i \mapsto g_i$ is the normal closure of $R \cup F_n''$ in $F_n$. By \cite{Hal54}, any finitely generated metabelian group $G$ can presented in this way using a finite set $R$. In order to present groups in $\Cu$ and their limits, we use the following notations.

Let $F$ be the free group of basis $(a,b)$ and set $c=\br{a,b}=aba^{-1}b^{-1}$.
For $P(x)=\sum_i \alpha_ix^i \in \Zx, Q(x,y)=\sum_{i,j}\beta_{ij}x^iy^j \in \Zxy$, define $$b^{P(a)}=\prod_{i}a^ib^{\alpha_i}a^{-i},\quad c^{Q(a,b)}= \prod_{i,j}(a^ib^j)c^{\beta_{ij}}(a^ib^j)^{-1}$$ where the product is taken with respect to the natural order of $\Z$ and the corresponding lexicographical order of $\Z^2$.

Let $\lambda \in \C \setminus \{0,1\}$ be an algebraic number over $\Z$ and let $P_{\lambda} \in \Z \br{x}$ be its minimal polynomial over $\Z$. With the notations of Theorem \ref{ThA}, we have 
$$\begin{array}{l} M=\PresM{a,b}{\emptyset},\quad \Z_p \wr \Z=\PresM{a,b}{c^{b-1}=b^p=1}, \\ 
M(\lambda)=\left\{
\begin{array}{l} \PresM{a,b}{c^{y-1}=c^{\Phi_m(a)}=1} \text{ if } \lambda \text{ is a root of unity of order } m \ge 2, \\
 M(\lambda)=\PresM{a,b}{c^{P_{\lambda}(a)}=1} \text{ else,} \end{array}\right. 
\\G(\lambda)=\PresM{a,b}{c^{b-1}=b^{P_{\lambda}(a)}=1}.\end{array}$$
More generally, define for every ideal $\aid \subset \Zxy$ the marked groups
$$\Ma=\PresM{a,b}{c^{P(a,b)}=1,\, P \in \aid}$$

 For every ideal $\bid \subset \Zx$, define also
$$ G(\bid)=\PresM{a,b}{c^{b-1}=b^{P(a)}=1,\, P \in \bid}$$
The groups $M(\aid)$ and $G(\bid)$ are marked by the pair $(a,b)$ and this marking will be omitted.
With these notations, we have $$M(y-1)=G(0)=\Z \wr \Z ,\,G(p)=\Z_p \wr \Z \text{ and } M(\Zxy)=G(x-1)=\Z^2.$$

In general, the groups $M(\aid)$ and $G(\bid)$ are not finitely presented in the usual sense. For instance, $G(\lambda)$ is finitely presented if and only if $\lambda$ or $\lambda^{-1}$ is an algebraic integer \cite[Theorem C]{BS78}. Using the Bieri-Strebel invariant defined in \cite{BiSt80}, it is easy to show that $M(\lambda)$ is finitely presented if and only if $\lambda$ is a root of unity.

\begin{remark} \label{RemBS}
Note that $G(n)=BS(1,n)$. Moreover, the group $G(m/n)$ can be obtained as a limit in $\M$ of marked groups isomorphic to $$BS(m,n)=\Pres{a,b}{ab^ma^{-1}=b^n}\, (m,n \in \Z\setminus\{0\})$$ \cite[Subsection 1.8]{BS76}. 
\end{remark}

\section{2-generated limits} \label{SecLimits}

In this section, we determine the set of all possible limits in $\M$ of two-generated abelian-by-(infinite cyclic) groups $G=R \rtimes \Z$.

\subsection{Convergence of groups and ideals} \label{SubSecFrom}

We identify $\Ru$ (resp. $\Rd$) with the set of ideals of $\Zx$ (resp. $\Zxy$) endowed with the patch topology.
We first state some elementary properties of the maps $M$ and $G$ defined in Section \ref{SubSecGP}.
\begin{lemma} \label{LemDisc}
\begin{itemize}
\item[$(i)$] The maps $\aid \mapsto \Ma$ and $\bid \mapsto \Gb$ are injective.
\item[$(ii)$] The map $M:\Rd \longrightarrow \mathcal{M}_2$ is continuous. Moreover $$M(\aid) \cdot \psi=M(\psi^{-1} \cdot \aid)$$ for every ideal $\aid \subset \Zxy$ and every $\psi \in \Aut(F_2)$.
\item[$(iii)$] $\br{\Ma}$ and $\br{\Gb}$ are discrete subsets of $\M$. If moreover $\bid$ is prime and $\bid \notin \{0,(x-1)\}$, then these subsets are disjoint.
\end{itemize}
\end{lemma}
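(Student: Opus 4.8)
The three assertions rest on a single identification: via the Magnus Embedding Theorem, $M'=F'/F''$ is the free cyclic $L$-module $L\cdot c$ with $c=[a,b]\mapsto 1$. I would fix once and for all the surjection $\mu\colon F'\to L$ obtained by composing $F'\twoheadrightarrow M'$ with this isomorphism, and write $N(\aid)\lhd F$ for the $\Ma$-defining normal subgroup. Since $\aid\subset M'$, the quotient $\Ma=M/\aid$ has abelianization $M/M'=\Z^2$; hence $N(\aid)\subset F'$, and for $w\in F$ one has $w\in N(\aid)$ if and only if $w\in F'$ and $\mu(w)\in\aid$. For $(i)$ the plan is to recover the ideal intrinsically: in $\Ma$ the derived subgroup is the cyclic $L$-module $M'/\aid\cong L/\aid$ generated by the image of $c$, whose annihilator is exactly $\aid$, so $\aid=\{P\in L:c^{P(a,b)}=1 \text{ in } \Ma\}$ is determined by the marked group. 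Likewise, in $\Gb=R\rtimes\Z$ with $R=\Zx/\bid$, the normal closure of $b$ is the cyclic $\Zx$-module $R$ generated by $b$ (conjugation by $a$ being multiplication by $x$), so $\bid=\{P\in\Zx:b^{P(a)}=1 \text{ in } \Gb\}$ is recovered from $\Gb$. Injectivity of both maps follows.

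For the continuity in $(ii)$ I would argue coordinatewise: a map into $2^{F}$ is continuous as soon as every coordinate is, and the $w$-th coordinate $\aid\mapsto[w\in N(\aid)]$ is constantly $0$ when $w\notin F'$ and equals the single patch-coordinate $\aid\mapsto[\mu(w)\in\aid]$ of $2^{L}$ when $w\in F'$; both are continuous. For the equivariance, note that $\psi\in\Aut(F)$ induces $\bar\psi\in\Aut(M)$ preserving $M'$ and acting on $M/M'=\Z^2$ as $A(\psi)$. Because the $L$-module structure on $M'$ is conjugation, $\bar\psi$ is $A(\psi)$-semilinear: $\bar\psi(\ell\cdot m)=A(\psi)(\ell)\cdot\bar\psi(m)$ for $\ell\in L,\,m\in M'$. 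Writing $\bar\psi(c)=u\cdot c$ and using $M'=\bar\psi(M')=\bar\psi(Lc)=A(\psi)(L)\,(uc)=(Lu)c$ forces $Lu=L$, i.e. $u\in L^{\times}$; thus the induced automorphism $\Psi$ of $L$ is $\Psi(\ell)=u\,A(\psi)(\ell)$, and since a unit fixes every ideal, $\Psi(\aid)=A(\psi)(\aid)=\psi\cdot\aid$. Combining $\mu\circ\psi|_{F'}=\Psi\circ\mu$ with the right action $\Ma\cdot\psi\leftrightarrow\psi^{-1}(N(\aid))$ gives, for $w\in F'$, that $w\in\psi^{-1}(N(\aid))$ iff $A(\psi)(\mu(w))\in\aid$ iff $\mu(w)\in\psi^{-1}\cdot\aid$, so $\psi^{-1}(N(\aid))=N(\psi^{-1}\cdot\aid)$, as claimed. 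I expect this semilinearity-up-to-a-unit to be the only delicate point, since it is precisely what lets the $\GL_2(\Z)$-action on ideals (defined only modulo units) match the topological action on $\M$.

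For the discreteness in $(iii)$ I would combine homogeneity with scatteredness. By Proposition \ref{PropTsys}, $\Aut(F)$ acts transitively on $\br{\Ma}$ (metabelian with abelianization $\Z^2$) and on $\br{\Gb}$ (abelian-by-(infinite cyclic)), through homeomorphisms of $\M$ preserving the isomorphism relation. Both sets lie in the subspace of metabelian marked groups, which is compact, metrizable and countable, hence scattered; every subspace of a scattered space is scattered, and a nonempty scattered space has an isolated point. Transitivity of a group of homeomorphisms then propagates isolation to every point, so $\br{\Ma}$ and $\br{\Gb}$ are discrete.

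For the disjointness, the plan is to separate the two families by the free rank of the abelianization. One always has $\Ma^{ab}=\Z^2$ (of rank $2$), by the computation above. On the other hand, in $\Gb=R\rtimes\Z$ the image of the normal abelian subgroup $R$ in $\Gb^{ab}$ is $R/(x-1)R$, so $\Gb^{ab}=\Z\oplus \Zx/(\bid+(x-1))\cong\Z\oplus\Z/J$ with $J=\{P(1):P\in\bid\}\subset\Z$. Here $J=0$ exactly when $\bid\subseteq(x-1)$; since $(x-1)$ has height one and $\Zx$ is a domain of Krull dimension $2$, for $\bid$ prime this means $\bid\in\{0,(x-1)\}$. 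Excluding these two ideals forces $J\neq0$, so $\Z/J$ is finite and $\Gb^{ab}$ has free rank $1\neq2$; therefore $\Ma\not\cong\Gb$ and $\br{\Ma}\cap\br{\Gb}=\emptyset$, which is exactly where the hypotheses $\bid\notin\{0,(x-1)\}$ are needed.
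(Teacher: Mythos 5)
Your proof is correct, but in part $(iii)$ --- the only part for which the paper gives a genuine argument, since $(i)$ and $(ii)$ are dismissed there as following from the definitions --- you take a genuinely different route on both claims. For discreteness, the paper argues pointwise and constructively: by Hall's maximal condition on normal subgroups \cite{Hal54}, any finitely generated metabelian group is finitely presented in the variety of metabelian groups and Hopfian; finite presentability yields a neighborhood $V$ of the canonical marked point in which every member of $V \cap \br{G}$ is a marked proper-or-improper quotient of $G$, and Hopficity forces these to coincide with $G$, after which \cite[Lemma 1]{CGP07} propagates isolation across $\br{G}$. You instead get the existence of one isolated point for free from scatteredness of the countable compact metabelian subspace and propagate it by transitivity of the $\Aut(F_2)$-action (Proposition \ref{PropTsys}); this is shorter and softer, though note that countability of the metabelian subspace also rests on Hall's theorem, so both routes use the same underlying finiteness, and the paper's version has the advantage of working for arbitrary finitely generated metabelian groups without invoking transitivity. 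For disjointness, the paper first characterizes exactly when the \emph{marked} groups coincide --- $\Ma = \Gb$ iff $\aid=(\bid',y-1)$ and $\bid=(x-1)\bid'$ for some ideal $\bid' \subset \Zx$ --- and then uses transitivity on both classes to reduce abstract isomorphism to marked equality up to $\Aut(F_2)$, with primeness of $\bid$ ruling out $\bid=(x-1)\bid'$ outside $\{0,(x-1)\}$. Your argument via the free rank of the abelianization ($\Ma^{\mathrm{ab}}=\Z^2$ versus $\Gb^{\mathrm{ab}}\cong\Z\oplus\Z/J$ with $J\neq 0$ precisely when the prime $\bid\not\subseteq(x-1)$) is more elementary, needs no transitivity for this step, and isolates cleanly why $0$ and $(x-1)$ are the exceptional ideals; what it gives up is the finer information in the paper's marked-group characterization, which describes the intersection $\br{\Ma}\cap\br{\Gb}$ for arbitrary, not necessarily prime, $\bid$. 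Your detailed treatment of $(ii)$ (semilinearity of the induced map on $M'$ up to a unit of $\Zxy$, which is what makes the ideal-level action well defined) is a correct filling-in of what the paper leaves implicit.
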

\begin{proof}
Both $(i)$ and $(ii)$ follow from the definitions.

$(iii)$: Let $G \in \{\Ma,\Gb\}$. In order to prove the first part, it suffices to show that $G$ is isolated in $\br{G}$ by \cite[Lemma 1]{CGP07}; it actually holds for any finitely generated metabelian group $G$. As a finitely generated metabelian groups satisfies the maximal condition on normal subgroups \cite{Hal54}, 
we have: $(1)$ $G$ is finitely presented in the variety of metabelian groups, $(2)$ $G$ enjoys the Hopf property. By $(1)$, there is a neighborhood $V$ of $G$ such that the map $(a,b) \mapsto S$ induces an epimorphism from $G$ onto $H$ for every $(H,S) \in V \cap \br{G}$. It follows from $(2)$ that $(V\setminus \{G\}) \cap \br{G}=\emptyset$.

Let us show the remaining claim. Notice first that $M(\aid)=G(\bid)$ holds if and only if there is an ideal $\bid'$ of $\Zx$ such that $\aid=(\bid',y-1)$ and $\bid=(x-1)\bid'$. Indeed, the equality $M(\aid) = G(\bid)$ implies that $y-1 \in \aid$. Thus there exists $\bid' \subset \Zx$ such that $\aid=(\bid',y-1)$. Then we easily check that $M(y-1,\bid')=G((x-1)\bid')$ and deduce from $(i)$ that $\bid=(x-1)\bid'$. As $\Aut(F_2)$ acts transitively on both $\br{M(\aid)}$ and $\br{G(\bid)}$ by Proposition \ref{PropTsys}, the set $\br{M(\aid)} \cap \br{G(\bid)}$ is non-empty if and only if there is $\psi \in \Aut(F_2)$ such that $M(\psi^{-1} \cdot \aid)=G(\bid)$. The result follows from the above remark.
\end{proof}

Let $G=R \rtimes \Z$ be a two-generated abelian-by-(infinite cyclic) group. Keeping the notations of Subection \ref{SubSecGP}, $R$ is generated as a ring by $\{\lambda^{\pm1}\}$, so we write $R=\Z_q \br{\lambda^{\pm1}}$ where $q$ is the characteristic of $R$. Let $\bid_{R}$ be the kernel of the ring homomorphism from $\Zx$ onto $R$ induced by $x \mapsto \lambda$. We have then $$G=G(\bid_R).$$ 
We fix two sequences $(G_n)$ and $(S_n)$ where $G_n=R_n \rtimes \Z$ and $S_n$ is a generating pair of $G_n$. Set $$R_n=\Z\br{\lambda_n^{\pm 1}},\, (a_n,b_n)=S_n,\,(k_n,l_n)=\sigma(S_n)$$ Let $\mathfrak{U}$ be an ultra-filter over $\N$ and let $\prod_{\mathfrak{U}}R_n$ be the ultra-product of the rings $R_n$ with respect to $\mathfrak{U}$, i.e. the quotient ring of the cartesian product $\prod R_n$ by the following equivalence relation: the sequences $(r_n)$ and $(r_n')$ are equivalent if $\{n: r_n=r'_n\}\in \ufrak$. We denote by $\prod_{\text{Fr\'echet}}R_n$ the quotient ring of $\prod R_n$ by the following equivalence relation: the sequences $(r_n)$ and $(r_n')$ are equivalent if $\{n: r_n\neq r'_n\}$ is finite.
We denote by $\aid_{\mathfrak{U}}$ (resp. $\aid$) the kernel of the ring homomorphism from $\Zxy$ to $\prod_{\mathfrak{U}}R_n$ (resp. $\prod_{\text{Fr\'echet}}R_n$) induced by $$x \mapsto (\lambda_n^{k_n}),\, y \mapsto (\lambda_n^{l_n})$$
We denote by $\aid_n$ the kernel of the ring homomorphism from $\Zxy$ to $R_n$ induced by $x \mapsto \lambda_n^{k_n},\, y \mapsto\lambda_n^{l_n}$.
Clearly, we have \begin{equation} \label{EqLimAn}\aid_{\ufrak}=\lim_{\ufrak}\aid_n\end{equation} with respect to the patch topology of $\Rd$. If the sequence $(\aid_n)$ converges, then it converges to $\aid$ and the identity map of $\prod R_n$ induces a ring isomorphism from $\prod_{\text{Fr\'echet}}R_n$ to $\prod_{\mathfrak{U}}R_n$, i.e. $\au=\aid$.

\begin{lemma} \label{LemConvAu}
\begin{itemize}
\item[$(i)$]
Assume that $|k_n|+|l_n|$ tends to infinity. Then $$\lim_{\mathfrak{U}}(G_n,S_n)=M(\aid_{\mathfrak{U}}).$$

\item[$(ii)$] 
Assume that $(k_n,l_n)$ is stationary. The sequence $(G_n,S_n)$ converges if and only if the sequence $(\bid_{R_n})$ converges in $\Ru$. In this case, its limit $G$ is abstractly isomorphic to $G(\bid)$ with $\bid=\lim \bid_{R_n}$.
\item[$(iii)$] If $(k_n,l_n)$ is stationary and $(\lambda_n) \in (K_p \setminus \{0\})^{\N}$ has no constant subsequence, then $\lim (G_n,S_n)=G(p)$.
\end{itemize}
\end{lemma}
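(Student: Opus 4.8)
The three statements share a computational core, so the plan is to set up once a dictionary between triviality of a word and membership in the ideals $\aid_n$, and then read off each part from it. Since every $G_n$ is metabelian, the value $w(S_n)$ of a word $w\in F_2$ depends only on the image $\bar w$ of $w$ in the free metabelian group $M$; write $(i,j)$ for the abelianization of $\bar w$. First I would compute the commutator $c=[a_n,b_n]$: it lies in the abelian normal part $R_n$ and equals $(\lambda_n-1)\Delta(S_n)$, which is nonzero because $\lambda_n\neq 1$ and $\Delta(S_n)\in R_n^{\times}$ by Proposition \ref{PropDelta}$(i)$. As conjugation by $a_n$ (resp.\ $b_n$) acts on $R_n$ by multiplication by $\lambda_n^{k_n}$ (resp.\ $\lambda_n^{l_n}$), this yields the dictionary: if $(i,j)=(0,0)$ then $\bar w=c^{W(a,b)}$ for a unique $W\in L=\Zxy$ and $w(S_n)=1$ in $G_n$ if and only if $W(\lambda_n^{k_n},\lambda_n^{l_n})=0$, i.e.\ if and only if $W\in\aid_n$; if $(i,j)\neq(0,0)$ then the image of $w(S_n)$ under $G_n\to\Z$ equals $ik_n+jl_n$.

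For part $(i)$, recall that $\au=\lim_{\ufrak}\aid_n$ in the patch topology by \eqref{EqLimAn}, so by definition $W\in\au$ iff $\{n:W\in\aid_n\}\in\ufrak$. The dictionary then shows that a word $w$ with $(i,j)=(0,0)$ is trivial in $\lim_{\ufrak}(G_n,S_n)$ exactly when $W\in\au$, which is precisely the defining condition for $w$ in $\Mau$. For $w$ with $(i,j)\neq(0,0)$ I must check it is $\ufrak$-almost never trivial: since $(k_n,l_n)$ is a coprime pair by Proposition \ref{PropDelta}$(i)$, the equation $ik_n+jl_n=0$ has only the two primitive solutions $\pm(j,-i)/\gcd(i,j)$, which are bounded, so as $|k_n|+|l_n|\to\infty$ they are attained finitely often and $w(S_n)\neq 1$ for all large $n$. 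Hence the defining normal subgroup of $\lim_{\ufrak}(G_n,S_n)$ coincides with that of $\Mau$.

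For part $(ii)$, the essential simplification is that $(k_n,l_n)=(k,l)$ is constant, so a \emph{single} Nielsen automorphism $\psi\in\Aut(F_2)$ with $\sigma(S_n\cdot\psi)=(1,0)$ serves for every $n$. After replacing $S_n$ by $S_n\cdot\psi$ (a homeomorphism of $\M$), both $(G_n,S_n\cdot\psi)$ and the standard marking of $G_n=G(\bid_{R_n})$ have $\sigma=(1,0)$, so Proposition \ref{PropDelta}$(ii)$ identifies them as the same point of $\M$; thus convergence of $(G_n,S_n)$ is equivalent to convergence of $(G(\bid_{R_n}))$. To tie the latter to convergence of $(\bid_{R_n})$ I apply Lemma \ref{lmecv} with the dictionary in the standard marking, where $\lambda=\bar x$ and $b=1\in R_n$: a word with nonzero $a$-exponent-sum is never trivial, while a word $w$ with zero $a$-exponent-sum and $b$-exponent-sum $j$ is trivial iff $Q_w(x):=j+(x-1)W(x,1)\in\bid_{R_n}$. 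As $w$ varies, $Q_w$ runs through all of $\Zx$ (given $Q$, take $j=Q(1)$ and $W(x,1)=(Q-j)/(x-1)$, which lies in $\Zx$ since $Q-j$ vanishes at $x=1$), so $(G(\bid_{R_n}))$ converges iff for every $Q\in\Zx$ the membership $Q\in\bid_{R_n}$ is eventually constant, i.e.\ iff $(\bid_{R_n})$ converges in $\Ru$; undoing $\psi$, the limit is then abstractly isomorphic to $\Gb$ with $\bid=\lim\bid_{R_n}$.

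For part $(iii)$, it suffices by part $(ii)$ to prove $\bid_{R_n}\to(p)$ in $\Ru$, since then $\lim(G_n,S_n)$ is abstractly isomorphic to $G((p))=G(p)$. As $R_n$ has characteristic $p$ we have $(p)\subseteq\bid_{R_n}$, so every $Q\in(p)$ lies in $\bid_{R_n}$ for all $n$. If $Q\notin(p)$, its reduction $\bar Q\in\Z_p[x^{\pm 1}]$ is a nonzero Laurent polynomial, hence has finitely many roots in $K_p$, and $Q\in\bid_{R_n}$ means $\bar Q(\lambda_n)=0$, i.e.\ $\lambda_n$ is one of these roots; since $(\lambda_n)$ has no constant subsequence each value occurs finitely often, so $Q\in\bid_{R_n}$ for only finitely many $n$, giving $\bid_{R_n}\to(p)$. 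I expect the main obstacle to be the bookkeeping of part $(ii)$: verifying that the fixed Nielsen move genuinely reduces $(G_n,S_n)$ to the standard marking of $G(\bid_{R_n})$, and that $w\mapsto Q_w$ surjects onto $\Zx$, which is exactly what forces convergence of groups and of ideals to match.
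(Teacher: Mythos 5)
Your proof is correct and follows essentially the same route as the paper's: the Magnus-embedding normal form $w=c^{W(x,y)}b^va^u$ evaluated at $S_n$, coprimality of $(k_n,l_n)$ together with $|k_n|+|l_n|\to\infty$ to kill words with nontrivial abelianization in $(i)$, a single Nielsen transformation combined with Proposition \ref{PropDelta}$(ii)$ and Lemma \ref{lmecv} in $(ii)$, and the finitely-many-roots argument in $(iii)$. The only cosmetic difference is that in $(ii)$ the paper reads the dictionary off the normal form $w=b^{W(x)}a^u$ in $M(y-1)=\Z \wr \Z$, which renders your surjectivity check for $w\mapsto Q_w$ automatic.
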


Assertion $(ii)$ merely means that the map $\bid \mapsto G(\bid)$ is an homeomorphism from $\mathcal{R}_1$ onto its image in $\mathcal{M}_2$. 
We shorten notations denoting by $b^r$ the group element $(r,0) \in R \rtimes \Z$.
\begin{proof}
$(i)$: Let $w \in F(a,b)$. There are $u,v \in \Z$ and $W(x,y) \in \Zxy$ such that $w=c^{W(x,y)}b^va^u$ holds in $M$. For every $n$ the following holds in
$G_n$: $$w=b^{r_n}a^{uk_n+vl_n}\text{ with } r_n=\Delta(S_n)W(\lambda_n^{k_n},\lambda_n^{l_n}) \text{ and } \Delta \text{ as in Subsection \ref{SubSecGP}}.$$

 Assume that $w=1$ in $\Mau$. It follows that $u=0=v$ and $W=0$. Therefore $r_n=0$ holds in $R_n$ for every $n$ and hence $w=1$ in $G_n$ for every $n$. 
Thus $w=1$ holds in the limit group $G$.

 Assume that $w=1$ in the limit $G$. There is a subset $I \in \ufrak$ such that $w=1$ holds in $G_n$ for every $n \in I$. It follows that 
 $uk_n+vl_n=0$ for infinitely many $n$. As $k_n$ and $l_n$ are coprime and $|k_n|+|l_n|$ tends to infinity, we deduce that $u=0=v$. 
Since $\Delta(S_n)$ is a unit of $R_n$ by Proposition \ref{PropDelta}.$i$, we deduce that $W(\lambda_n^{k_n},\lambda_n^{l_n})=0$ holds in $R_n$ for every $n \in I$, 
which proves that $W(x,y) \in \au$ and hence $w=1$ holds in $\Mau$.

$(ii)$: Replacing $S_n$ by $S_n \cdot \psi$ for some Nielsen tranformation $\psi$, we can assume that $S_n=(a,b)$ for all $n$. 
Let $w \in F(a,b)$. There are $u\in \Z$ and $W(x) \in \Zx$ such that $w=b^{W(x)}a^u$ holds in $M(y-1)$. For every $n$ we have
$w=b^{W(\lambda_n)}a^{u}$ in $G_n$. We obtain our condition of convergence reasonning as above.

$(iii)$: If $W \in \Zx$ belongs to infinitely many $\bid_{R_n}$ then $W$ has infinitely many zeros in $K_p$, which proves that $\lim \bid_{R_n}=(p)=\bid$. Therefore $G=G(p)$ by $(ii)$.
\end{proof}

\subsection{A convergence criterion based on absolute values} \label{SubSecCrit}

We now give several conditions on $(\lambda_n) \in K_p \setminus \{0\}$ and $(k_n,l_n)$ under which the convergence $$\lim (G_n,S_n)=M(p)$$ holds in $\M$.
 If $|k_n|+|l_n|$ tends to infinity, this means that $$\aid_{\ufrak}=(p)$$ holds for every free ultrafilter $\ufrak$ 
over $\N$, by Lemma \ref{LemConvAu}. In other words, the ring elements $x_{\ufrak}=(\lambda_n^{k_n})$ and $y_{\ufrak}=(\lambda_n^{l_n})$ are algebraically free
 elements of $R_{\ufrak}$ over $\Z_p$. Thus we are to establish a criterion of algebraic freeness (Lemma \ref{LemFi}).
We introduce new definitions in order to state this freeness criterion. 
Our convergence criterion (Proposition \ref{PropCrit}) and its proof are postponed at the end of this subsection.

Let $R$ be a ring and let $A$ be a subring of $R$. A family $\F \subset R$ is said to be \emph{free over $A$} if $\F$ is a basis of a free $A$-submodule of $R$. 

\begin{remark}
If $\F$ is free over $A$ and $\F$ is a free abelian subsemigroup (resp. subgroup) of the multiplicative semigroup $R$ (resp. $R^{\times}$), then its basis elements are algebraically free over $A$.
\end{remark}
Assume now that $R=\prod R_n$ is a countable cartesian product of arbitrary rings $R_n$. We consider the following stronger version of freeness:
\begin{definition}
 Let $\mathcal{F}\subset R$ and let $M \subset R$ be the $A$-submodule generated by $\mathcal{F}$. 
The family $\mathcal{F}$ (resp. the $A$-module $M$) is said to be \emph{subsequence-free over $A$} 
if for every infinite subset $X \subset \N$ the projection of $\mathcal{F}$ to $\prod_{n \in X} R_n$ is free over $A$. 
\end{definition}

\begin{remark}
Let $R_n=A=\Z$. Let $(f_n) \in \Z^{\N}$ be a non-constant sequence. Then $(f_n)$ belongs to some subsequence-free $\Z$-submodule of $\Z^{\N}$ if and only if $|f_n|$ tends to infinity.
\end{remark}

We assume moreover that for every $n$, the ring $R_n$ has an abolute value denoted by $|\cdot|_n$. In particular, each $R_n$ is an integral domain.

\begin{lemma} \label{LemAbs}
Let $\mathcal{F}$ be a subgroup of $R^{\times}$. Let $M \subset R$ be the $A$-submodule generated by $\mathcal{F}\setminus \{1\}$. 
We assume that the two following hold:
\begin{itemize}
\item
for every $a \in A \setminus \{0\}$, $|a(n)|_n$ is bounded away from $\{0,\infty\}$,
\item for every $f$ in $M$ and every free ultrafilter $\ufrak$ on $\N$, we have $$\lim_{\ufrak}|f(n)|_n\in\{0,\infty\}.$$
\end{itemize}
Then $\mathcal{F}$ is subsequence-free on $A$.
\end{lemma}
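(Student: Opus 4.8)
The plan is to prove the statement directly, by fixing an arbitrary infinite $X \subset \N$ and showing that the projected family is $A$-linearly independent, which for a basis is exactly what freeness over $A$ means. Write $\pi_X \colon R \to \prod_{n \in X} R_n$ for the canonical projection. A first, purely preliminary observation is that the first hypothesis makes $\pi_X$ injective on $A$: if $a \in A \setminus \{0\}$, then $|a(n)|_n$ lies in some interval $[c,C]$ with $0 < c \le C < \infty$, so in particular $a(n) \neq 0$ for every $n$ and hence $\pi_X(a) \neq 0$. Consequently, freeness of the projection of $\mathcal{F}$ amounts to the assertion that the family $(\pi_X(f))_{f \in \mathcal{F}}$ admits no nontrivial finite relation $\sum_i a_i \pi_X(f_i) = 0$ with $a_i \in A$ and the $f_i \in \mathcal{F}$ pairwise distinct.

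The core of the argument is a normalization trick that turns a hypothetical dependence into a membership statement about $M$. Suppose, for contradiction, that such a relation exists in $\prod_{n \in X} R_n$; after discarding the terms whose coefficient vanishes in $A$, we may assume every $a_i \in A \setminus \{0\}$. Since $\mathcal{F} \subset R^{\times}$, the component $f_1(n)$ is a unit in each $R_n$, so $\pi_X(f_1)$ is invertible in $\prod_{n \in X} R_n$; multiplying the relation by its inverse and using that $\mathcal{F}$ is a subgroup gives
$$a_1 + \sum_{i=2}^{k} a_i \pi_X(g_i) = 0, \qquad g_i = f_i f_1^{-1} \in \mathcal{F} \setminus \{1\}.$$
Setting $f = -\sum_{i=2}^{k} a_i g_i \in M$ (with $f = 0$ when $k=1$), this reads $\pi_X(a_1) = \pi_X(f)$; that is, the sequences $a_1$ and $f$ agree on $X$.

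Now the two hypotheses collide. I fix a free ultrafilter $\ufrak$ on $\N$ containing $X$, which is possible because $X$ is infinite. Since $a_1$ and $f$ coincide on $X \in \ufrak$ and the absolute values satisfy $|{-}x|_n = |x|_n$, the real sequences $(|a_1(n)|_n)$ and $(|f(n)|_n)$ have the same $\ufrak$-limit in $[0,\infty]$. On one hand, the first hypothesis applied to $a_1 \neq 0$ forces this common limit into $[c,C] \subset (0,\infty)$; on the other hand, the second hypothesis applied to $f \in M$ forces it into $\{0,\infty\}$. These are incompatible, so no nontrivial relation exists and $\pi_X(\mathcal{F})$ is free over $A$. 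As $X$ was an arbitrary infinite subset, $\mathcal{F}$ is subsequence-free over $A$.

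I expect the delicate points to be bookkeeping rather than substance: justifying the reduction to all-nonzero coefficients and to a single ``identity'' term via multiplication by the unit $\pi_X(f_1)$, and confirming that freeness of the projection really is equivalent to the linear independence established here — a point that rests on the injectivity of $\pi_X$ on $A$ noted at the outset, so that vanishing of the projected coefficients forces vanishing of the coefficients in $A$. The conceptual heart, namely that no element of $M$ can keep its absolute value bounded away from both $0$ and $\infty$ along an ultrafilter while a nonzero element of $A$ always does, is supplied verbatim by the hypotheses, so there is no genuine analytic obstacle to overcome.
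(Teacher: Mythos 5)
Your proof is correct and follows essentially the same route as the paper's: both divide the hypothetical relation by one of the units $f_j$ of $\mathcal{F}$ so that a nonzero element of $A$ is exhibited as (the restriction of) an element of $M$, and then play the two hypotheses against each other via an ultrafilter limit of absolute values. The only cosmetic difference is that the paper performs this for every index $j$ and concludes $a_j=0$ directly, whereas you normalize by $f_1$ after discarding zero coefficients and argue by contradiction.
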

\begin{proof}
Assume that there are $a_1,\dots,a_m$ in $A$, $f_1,\dots,f_m$ 
pairwise distinct in $\mathcal{F}$, and an infinite subset $I \subset \N$ 
such that $\sum_{i=1}^m a_i(n) f_i(n)=0$ holds for every $n \in I$.
For every $j=1,\dots,m$ and for all $n \in I$, we have 
$$\sum_{i\neq j} a_i(n) (f_if_j^{-1})(n)=-a_j(n).$$ 
Let $\ufrak$ be an ultra-filter on $\N$ containing $I$.
Since $\lim_{\ufrak} |\sum_{j}^m a_i(n) f_i(n)|_n \in \{0,\infty\}$, we deduce that $a_j=0$ for every $j$. Hence $\mathcal{F}$ is subsequence-free on $A$.
\end{proof}

Now we are able to state our main algebraic freeness criterion.
\begin{lemma} \label{LemFi}
Let $(\lambda_n) \in R^{\times}$. Assume that the two following hold:
\begin{itemize}
\item for every $a \in A \setminus \{0\}$, $|a(n)|_n$ is bounded away from $\{0,\infty\}$,
\item $|\lambda_n|_n>1$ is bounded away from $1$.
\end{itemize}
Let $\{f_i\}_i \subset \Z^{\N}$ be the basis of a  subsequence-free module over $\Z$ without non-zero constant sequences. Then $\{(\lambda_n^{f_i(n)})\}_i$ freely generates an abelian free subgroup $\F \subset R^{\times}$ which is  subsequence-free over $A$.
\end{lemma}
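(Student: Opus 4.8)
The plan is to realise $\F$ as the multiplicative subgroup of $R^{\times}$ generated by the elements $g_i=(\lambda_n^{f_i(n)})_n$ and to deduce its subsequence-freeness from Lemma~\ref{LemAbs}; the only genuine work is verifying the second hypothesis of that lemma. First I would record the elementary consequences of $|\lambda_n|_n\ge 1+\epsilon>1$. Since each $|\cdot|_n$ is multiplicative, $|\lambda_n^{\,j}|_n=|\lambda_n|_n^{\,j}=1$ forces $j=0$; hence for $h=\sum_i m_if_i$ the monomial $\prod_i g_i^{m_i}=(\lambda_n^{h(n)})_n$ is trivial in $R^{\times}$ if and only if $h=0$, i.e.\ (the $\Z$-module generated by $\{f_i\}$ being free) if and only if all $m_i=0$. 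The same computation shows $h\mapsto(\lambda_n^{h(n)})_n$ is injective. This already proves that the $g_i$ freely generate a free abelian group $\F$, and that each element of $\F\setminus\{1\}$ has the form $(\lambda_n^{h(n)})_n$ for a unique nonzero element $h$ of the module.

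The decisive input is that every nonzero $h$ in the module generated by $\{f_i\}$ satisfies $\lim_{n\to\infty}|h(n)|=\infty$: such an $h$ is non-constant, because the module has no nonzero constant sequence, and it lies in a subsequence-free module, so this is exactly the characterisation recorded in the remark preceding this lemma. Call this property $(\ast)$. With $(\ast)$ in hand I would verify the hypotheses of Lemma~\ref{LemAbs}. The first one is identical to the first hypothesis of the present lemma. For the second, let $M$ be the $A$-submodule generated by $\F\setminus\{1\}$, fix $f=\sum_{k=1}^m a_k e_k\in M$ with $a_k\in A\setminus\{0\}$ and the $e_k=(\lambda_n^{h_k(n)})_n$ pairwise distinct (so the $h_k$ are pairwise distinct nonzero module elements, by the injectivity above), and fix a free ultrafilter $\ufrak$.

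At each index $n$ the integers $h_1(n),\dots,h_m(n)$ are totally ordered, and since the finitely many differences $h_k-h_{k'}$ are eventually nonzero by $(\ast)$, a single ordering occurs on a set of $\ufrak$; I take $k_0$ maximal for it, so that $h_{k_0}(n)>h_k(n)$ for all $k\ne k_0$ along $\ufrak$, and by $(\ast)$ each gap $h_{k_0}(n)-h_k(n)$ tends to $+\infty$ along $\ufrak$. Writing $f(n)=\lambda_n^{h_{k_0}(n)}\bigl(a_{k_0}(n)+\sum_{k\ne k_0}a_k(n)\lambda_n^{h_k(n)-h_{k_0}(n)}\bigr)$, the bound $|\lambda_n|_n\ge 1+\epsilon$ gives $|a_k(n)\lambda_n^{h_k(n)-h_{k_0}(n)}|_n\le C(1+\epsilon)^{h_k(n)-h_{k_0}(n)}\to 0$ along $\ufrak$, where $C$ bounds the $|a_k(n)|_n$ from above by the first hypothesis. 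Since $|a_{k_0}(n)|_n$ is bounded below by some $\epsilon_0>0$, the reverse triangle inequality shows the parenthesised factor stays between $\epsilon_0/2$ and $C+\epsilon_0/2$ along $\ufrak$. Hence $|f(n)|_n=|\lambda_n|_n^{h_{k_0}(n)}\cdot O(1)$, and because $(\ast)$ forces $h_{k_0}(n)\to\pm\infty$ along $\ufrak$, we obtain $\lim_{\ufrak}|f(n)|_n\in\{0,\infty\}$. Lemma~\ref{LemAbs} then yields that $\F$ is subsequence-free over $A$, which together with the first paragraph completes the proof.

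The main obstacle is precisely the dominant-term estimate of the last paragraph: the dichotomy $\lim_{\ufrak}|f(n)|_n\in\{0,\infty\}$ hinges on $(\ast)$, since without divergence of the exponent gaps the terms whose exponents differ from $h_{k_0}$ by a bounded amount would not decay, and the parenthesised factor could tend to a finite nonzero limit (or to $0$), destroying the dichotomy. Once $(\ast)$ is invoked, the only remaining care is the bookkeeping of the total ordering of the exponents along $\ufrak$ and the two boundedness hypotheses feeding the reverse triangle inequality, both of which are routine.
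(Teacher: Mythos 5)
Your proof is correct and matches the paper's strategy: free generation of $\F$ plus a reduction to Lemma \ref{LemAbs}, whose dichotomy hypothesis is checked by factoring out a dominant power of $\lambda_n$ and combining the two boundedness assumptions with the divergence property $(\ast)$ of nonzero elements of $\bigoplus_i\Z f_i$ (a property the paper also relies on, implicitly, when it declares ``we can suppose $\lim_{n\to\infty} g_1(n)=\infty$''). The only difference is organizational: the paper establishes $\lim_{\ufrak}|F(n)|_n\in\{0,\infty\}$ by induction on the number of terms, allowing the remainder after factoring to tend to $0$ or $\infty$, whereas you select the $\ufrak$-maximal exponent once and force every non-dominant term to decay --- a non-inductive variant that has the minor virtue of making the limit-$0$ case explicit where the paper's normalization leaves it implicit.
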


\begin{sproof}{Proof of Lemma \ref{LemFi}}
It follows directly from our assumption on $\{f_i\}_i$ that the abelian group $\mathcal{F}$ is freely generated by the sequences $(\lambda_n^{f_i(n)})$. So it suffices to prove that $\mathcal{F}$ satisfies the hypothesis of 
Lemma \ref{LemAbs}. Let $\ufrak$ be a free ultra-filter on $\N$.
Let $a_1,\dots,a_m \in A,\, g_1,\dots,g_m \in \bigoplus_i \Z f_i$ and set $F(n)=\sum_{i=1}^m a_i(n){\lambda_n}^{g_i(n)}$. We will prove that $\lim_{\ufrak}|F(n)|_n \in \{0,\infty\}$ by induction on $m$. If $m=1$, the result is obvious. Assume that $m \ge 2$.
We can suppose that $a_1 \neq 0$ and $\lim_{n \to \infty}g_1(n)=\infty$.
 Write $F(n)={\lambda_n}^{g_1(n)}(a_1(n)+F_1(n))$.
 By induction hypothesis, we have $\lim_{\ufrak}|F_1(n)|_n \in \{0,\infty\}$. If follows that $\lim_{\ufrak}|F(n)|_n=\infty$, which completes the proof.
\end{sproof}

We still need further definitions and a last elementary freeness result.
Let $f,a \in R$. We say that \emph{$f$ dominates $a$} if $\{n \in \N: f(n)=0\}$ is finite and $$\lim_{n \to \infty} \frac{|a(n)|_n}{|f(n)|_n}=0.$$
We say that $\mathcal{F} \subset R$ dominates $B \subset R$ if every element of $\mathcal{F}$ dominates every element of $B$.

\begin{lemma} \label{LemDom}
Let $I$ be a totally ordered set and let $\F=\{f_i: i \in I\} \subset R^{\times}$. Assume that 
\begin{itemize}
\item $A\setminus \{0\}$ dominates $A \mathcal{F}^{-1}$,
\item $f_if_j^{-1} \in \mathcal{F}$ whenever $i>j$.
\end{itemize} Then $\F$ is free over $A$. 
\end{lemma}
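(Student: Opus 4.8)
The plan is to prove freeness by contradiction, exploiting the total order on $I$ to single out a ``dominant'' term in any hypothetical dependence. Suppose $\F$ is not free over $A$, so there is a nontrivial relation $\sum_{i=1}^{m} a_i f_{j_i} = 0$ with $a_i \in A$, the elements $f_{j_i} \in \F$ pairwise distinct, indexed so that $j_1 < \cdots < j_m$ in $I$, and with the top coefficient $a_m \neq 0$ (any nontrivial relation can be truncated to this form by taking $j_m$ to be the largest index carrying a nonzero coefficient). Since $\F \subset R^{\times}$, I would multiply through by the unit $f_{j_m}^{-1}$ and move the $i=m$ term across, obtaining
\[ a_m = -\sum_{i=1}^{m-1} a_i\, f_{j_i} f_{j_m}^{-1}. \]

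The second hypothesis now identifies where the remaining summands live: for $i < m$ we have $j_m > j_i$, so $f_{j_m} f_{j_i}^{-1} \in \F$, whence $f_{j_i} f_{j_m}^{-1} = (f_{j_m} f_{j_i}^{-1})^{-1} \in \F^{-1}$ and each $a_i f_{j_i} f_{j_m}^{-1}$ belongs to $A\F^{-1}$. The first hypothesis then controls their size relative to $a_m$: because $a_m \in A \setminus \{0\}$ dominates every element of $A\F^{-1}$, the set $\{n : a_m(n) = 0\}$ is finite and $\lim_{n\to\infty} |a_i f_{j_i} f_{j_m}^{-1}(n)|_n / |a_m(n)|_n = 0$ for each $i<m$. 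Applying the triangle inequality for $|\cdot|_n$ to the displayed identity and using $|-x|_n = |x|_n$, for all but finitely many $n$ one gets
\[ 1 = \frac{|a_m(n)|_n}{|a_m(n)|_n} \le \sum_{i=1}^{m-1} \frac{|a_i f_{j_i} f_{j_m}^{-1}(n)|_n}{|a_m(n)|_n}, \]
whose right-hand side tends to $0$. This contradiction shows that no nontrivial relation exists, so $\F$ is free over $A$.

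I expect the argument to be essentially bookkeeping once the correct term is isolated; the one genuinely decisive move is the decision to divide by the maximal element $f_{j_m}$, since this is precisely what, via the total order together with the closure hypothesis $f_i f_j^{-1} \in \F$ for $i > j$, forces every other term into $A\F^{-1}$, exactly where the domination hypothesis can be applied. The only points requiring a little care are discarding the finitely many indices $n$ with $a_m(n)=0$ (licensed by the definition of domination) and observing that an absolute value satisfies $|-x|_n = |x|_n$, so that the triangle inequality yields the lower bound $1$ rather than something weaker.
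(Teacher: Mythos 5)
Your proof is correct and follows essentially the same route as the paper's: isolate the term with the maximal index, divide by that unit so that the closure hypothesis $f_if_j^{-1}\in\mathcal{F}$ for $i>j$ places every remaining summand in $A\mathcal{F}^{-1}$, then let the domination hypothesis (including the finiteness of the zero set of the dominating coefficient) force the absolute values to the contradiction $1\le o(1)$. The only cosmetic difference is that you truncate the relation at the largest index carrying a nonzero coefficient, whereas the paper assumes the top coefficient nonzero and inducts on $m$; the two devices are interchangeable.
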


\begin{remark}
 If  $\F$ is of the form $\{f^i: i \in \N \setminus \{0\}\}$ for some $f \in R^{\times}$
 and satisfies the hypothesis of Lemma \ref{LemFi}, then $f$ is transcendent over $A$.
\end{remark}

\begin{proof}
Let $a_1,\dots,a_m \in A$ and $f_{i_1},\dots,f_{i_m} \in \mathcal{F}$ such that $i_1>\dots>i_m$ and $\sum_{j=1}^m a_j f_{i_j}=0$. We show be induction on $m$ that $a_j=0$ for $j=1,\dots,m$. The case $m=1$ is immediate. Assume that $m \ge 2$. If $a_1 \neq 0$, the following identity would hold in the fraction field of $R_n$ $$1=F(n) \text{ with } F(n)=-\sum_{j=2}^m a_1(n)^{-1}(a_j(n)f_{i_j}(n)f_{i_1}(n)^{-1})$$ for all $n$ large enough. As $|F(n)|_n$ tends to zero by hypothesis, this would lead to $1=0$, a contradiction. Hence $a_1=0$ and the result follows by induction hypothesis. 
\end{proof}

To state our convergence criterion, we consider three \emph{natural conditions} on the sequence $(k_n,l_n)$:

\begin{itemize}
\item[$(1)$] $(k_n,l_n)$ is stationary;
\item[$(2)$] $ \vert k_n\vert +\vert l_n \vert$ tends to infinity
and there is $\nu=(\alpha,\beta)\in \Z^2 \setminus \{(0,0)\}$ and
$\gamma\in \Z$ such that: $$\alpha k_n+\beta l_n=\gamma \text{ for
all } n \text{ large enough}.$$
\item[$(3)$]$\vert k_n\vert +\vert l_n \vert$ tends to infinity and for all $(\alpha,\beta) \in \Z^2 \setminus \{(0,0)\}$ and all
$\gamma\in \Z$ we have: $$\alpha k_n+\beta l_n \neq \gamma \text{ for
all } n \text{ large enough}.$$
\end{itemize}
Condition $(3)$ means that $\{(k_n),\,(l_n)\}$ is the basis of a subsequence-free module over $\Z$ without non-zero constant sequences. Note that if $(3)$ does not hold, then there is a subsequence of $(k_n,l_n)$ which satisfies $(2)$. Note also that if $(1)$ is satisfied by no subsequence of $(k_n,l_n)$ then $ \vert k_n\vert +\vert l_n \vert$ tends to infinity.

We are in position to prove the main result of this subsection.
\begin{proposition} \label{PropCrit}
 Assume that each $R_n$ is a subring of an integral domain $D$ with characteristic subring $\Z_p$. Let $(\lambda_n) \in R^{\times}$ be
without constant subsequence. Assume moreover that $|\lambda_n|_n$ is bounded away from $1$ and
 $(k_n,l_n)$ satisfies either $(2)$ or $(3)$. Then $\lim (G_n,S_n)=M(p)$.
\end{proposition}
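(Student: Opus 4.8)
The plan is to reduce the convergence statement to an ideal-theoretic one and then to an algebraic-freeness statement handled by the criteria of this subsection. Both conditions $(2)$ and $(3)$ force $|k_n|+|l_n|\to\infty$, so Lemma~\ref{LemConvAu}$(i)$ gives $\lim_{\ufrak}(G_n,S_n)=\Mau$ for every free ultrafilter $\ufrak$. Since every subsequential limit of $(G_n,S_n)$ is of this form and conversely (Subsection~\ref{SubSecUltra}), and since $\aid\mapsto\Ma$ is injective (Lemma~\ref{LemDisc}$(i)$), it suffices to prove that $\au=(p)$ for every free $\ufrak$; equivalently, that the images $x_{\ufrak}=(\lambda_n^{k_n})$ and $y_{\ufrak}=(\lambda_n^{l_n})$ are algebraically free over $\Z_p$ in $\prod_{\ufrak}R_n$. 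Throughout I take $A=\Z_p$, whose nonzero elements have $|\cdot|_n$ bounded away from $\{0,\infty\}$ (in positive characteristic they are units of absolute value one, and in characteristic zero this is built into the normalization of $|\cdot|_n$), so that Lemmas~\ref{LemFi} and~\ref{LemDom} are available.

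Under condition $(3)$ there is nothing to reduce: by definition $\{(k_n),(l_n)\}$ is the basis of a subsequence-free $\Z$-module without nonzero constant sequences. Applying Lemma~\ref{LemFi} with $A=\Z_p$ and $\{f_1,f_2\}=\{(k_n),(l_n)\}$, the sequences $(\lambda_n^{k_n})$ and $(\lambda_n^{l_n})$ generate a free abelian subgroup of $R^{\times}$ that is subsequence-free over $\Z_p$; by the remark following the definition of subsequence-freeness its generators are algebraically free over $\Z_p$, whence $\au=(p)$.

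Condition $(2)$ is the substantial case, and I would treat it in three steps. First, since $(\lambda_n)$ has no constant subsequence and $|\lambda_n|_n$ is bounded away from $1$, the element $\lambda_{\ufrak}=(\lambda_n)$ is transcendental over $\Z_p$: a fixed nonzero Laurent polynomial over $\Z_p$ has finitely many roots, each attained by finitely many $n$, so the set of $n$ where it vanishes at $\lambda_n$ is finite and not in $\ufrak$. Second, using the relation $\alpha k_n+\beta l_n=\gamma$ supplied by $(2)$, I complete a primitive multiple of $(\alpha,\beta)$ to a matrix of $\GL(2,\Z)$ and apply the corresponding Nielsen transformation to the markings; this is continuous, preserves the abstract isomorphism type of the limit, and fixes the target since $(p)$ is $\GL(2,\Z)$-invariant, so it is harmless and reduces us to $k_n=c$ constant (with $c\neq0$ by coprimality) and $|l_n|\to\infty$. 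Third, $x_{\ufrak}=\lambda_{\ufrak}^{c}$ is then transcendental over $\Z_p$, so $A'=\Z_p[\lambda_{\ufrak}^{\pm c}]$ is a Laurent polynomial ring, and it remains to see that $y_{\ufrak}=\lambda_{\ufrak}^{l'}$ is transcendental over $A'$. Restricting to a set of $\ufrak$ on which $l_n$ has constant sign and (after possibly replacing $\lambda_n$ by $\lambda_n^{-1}$) $|\lambda_n|_n>1$ and $l_n\to+\infty$, I apply Lemma~\ref{LemDom} with $A=A'$ and the totally ordered family $\mathcal{F}=\{\lambda_{\ufrak}^{l'b}:b\geq1\}$: consecutive quotients lie in $\mathcal{F}$, and $A'\setminus\{0\}$ dominates $A'\mathcal{F}^{-1}$ because $|\lambda_n|_n^{-l_nb}\to0$ faster than any fixed power of $|\lambda_n^{c}|_n$ grows. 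Hence $\mathcal{F}$ is free over $A'$; grouping any relation $\sum_{a,b}\mu_{ab}\lambda_{\ufrak}^{ca+l'b}=0$ by the power of $y_{\ufrak}$ forces each coefficient $\sum_a\mu_{ab}(\lambda_{\ufrak}^{c})^{a}$ to vanish, and transcendence of $\lambda_{\ufrak}^{c}$ over $\Z_p$ gives $\mu_{ab}=0$. Thus $\au=(p)$ in this case as well.

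The main obstacle is the last step of condition $(2)$. The two generators sit at incomparable scales — $x_{\ufrak}=\lambda^{c}$ is a fixed power while $y_{\ufrak}=\lambda^{l'}$ is an unbounded one — so the one-stage criterion of Lemma~\ref{LemFi}, which presupposes an exponent module without constant sequences, cannot be applied directly; instead one stratifies by the power of $y_{\ufrak}$ and argues over the enlarged base ring $\Z_p[\lambda^{\pm c}]$. The delicate point is verifying the domination hypothesis of Lemma~\ref{LemDom} there: it is precisely the assumption that $|\lambda_n|_n$ is bounded away from $1$ which guarantees that distinct powers of $y_{\ufrak}$ separate in absolute value faster than the coefficient polynomials in $\lambda^{c}$ can grow, and this separation is what ultimately produces the contradiction forcing freeness.
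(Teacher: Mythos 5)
Your proof is, step for step, the paper's own: the same reduction via Lemma~\ref{LemConvAu} to showing $\au=(p)$ for every free ultrafilter $\ufrak$, the same application of Lemma~\ref{LemFi} to the exponent basis $\{(k_n),(l_n)\}$ under condition $(3)$, and, under condition $(2)$, the same Nielsen normalization to a constant first exponent $\gamma$ (your $c$) followed by Lemma~\ref{LemDom} over the enlarged coefficient ring $A'=\Z_p\br{(\lambda_n^{\pm\gamma})}$ with $\F=\{(\lambda_n^{il_n}):i\ge 1\}$, finishing with transcendence of $(\lambda_n^{\gamma})$ over $\Z_p$. So the route is not different; the only question is whether the detail you add actually closes the one step the paper leaves unjustified.

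It does not, and this matters because it is exactly the step you call delicate. Domination of $A'\F^{-1}$ by $A'\setminus\{0\}$ requires, besides your upper bound on the numerators $|a'(n)f(n)^{-1}|_n$, a \emph{lower} bound on $|a(n)|_n$ for every nonzero $a\in A'$; your phrase ``$|\lambda_n|_n^{-l_nb}\to0$ faster than any fixed power of $|\lambda_n^{c}|_n$ grows'' silently assumes that nonzero elements of $A'$ cannot be extremely small, and in characteristic zero this is false. Take $|\cdot|_n$ to be the complex modulus, let $\lambda_n$ be the real root near $2$ of $(x-2)x^{l_n}-1$ with $l_n$ strictly increasing, and let $(k_n,l_n)=(1,l_n)$: all hypotheses of the proposition hold with $p=0$, and condition $(2)$ holds with $(\alpha,\beta,\gamma)=(1,0,1)$. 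Then $u=(\lambda_n-2)$ is a nonzero element of $A'$ with $|u(n)|_n=|\lambda_n|_n^{-l_n}$, the domination ratio for $a'=1$ and $f=(\lambda_n^{l_n})$ is identically $1$, and $(\lambda_n^{l_n})$ is not transcendental over $A'$ at all, since $u\cdot(\lambda_n^{l_n})=1$. Indeed this is a counterexample to the statement itself: the word $c^{W(a,b)}$ with $W(x,y)=(x-2)y-1$ is trivial in $(G_n,S_n)$ for every $n$ but nontrivial in $M$, so by Lemma~\ref{lmecv} no subsequence converges to $M=M(0)$ (every ultrafilter limit is $M(\au)$ with $(x-2)y-1\in\au$). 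The argument, and the statement, are fine when the missing lower bound is available --- in positive characteristic, or for ultrametric $|\cdot|_n$ equal to $1$ on nonzero coefficients, where the strong triangle inequality gives $|Q(\lambda_n^{\gamma})|_n\ge|\lambda_n|_n^{-|\gamma|\deg Q}$ --- but not for the archimedean values under which this proposition is invoked in the proof of Theorem~\ref{ThTame}. To be fair, you inherited this gap rather than introduced it: the paper asserts the domination with no justification whatsoever, so your write-up and the published proof stand or fall together at exactly this point.
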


\begin{proof}
By Lemma \ref{LemConvAu}, it suffices to show that $\aid_{\ufrak}=(p)$ holds for every free ultrafilter $\ufrak$ on $\N$.
For such a $\ufrak$, consider $W \in \au$. By definition, $W(\lambda_n^{k_n},\lambda_n^{l_n})=0$ for every $n$ in $I$ for some $I \in \ufrak$. Set $R_n=\Z_p \br{{\lambda_n}^{\pm 1}},\,R=\prod_{n \in I} R_n$.

Assume that $(2)$ holds. Replacing $W$ by $W \cdot \psi$ for some Nielsen transformation $\psi$, we can assume that 
$k_n=\gamma \in \Z \setminus \{0\},\,l_n>0$ for all $n \in I$ and $\lim_{\ufrak} |\lambda_n|_n>1$. Thus $W(\lambda_n^{\gamma},\lambda_n^{l_n})=0\text{ holds in }D$ for all $n  \in I$. Since $A=\Z_p \br{(\lambda_n^{\gamma})_{n \in I},(\lambda_n^{-\gamma})_{n \in I}}$ dominates $A \mathcal{F}^{-1}$ with $\F=\{(\lambda_n^{il_n})_{n \in I}: i \in \N \setminus \{0\}\}$, it follows from Lemma \ref{LemDom} that $(\lambda_n^{l_n})_{n \in I}$ is transcendent over $A$. By hypothesis, $(\lambda_n^{\gamma})_{n \in I}$ is transcendant over $\Z_p$. Consequently, $W \in (p)$.

Assume that $(3)$ holds. Let $\F \subset R^{\times}$ be the subgroup generated by 
$$\{(\lambda_n^{k_n})_{n \in I},(\lambda_n^{l_n})_{n \in I}\}.$$ Then $\F$ is free over $\Z_p$ by Lemma \ref{LemFi}. Hence $W \in (p)$.
\end{proof}

For a constant sequence $(\lambda_n) \in (K \setminus \{0\})^{\N}$ with $K$ an arbitrary field, Proposition \ref{PropCrit} applies in the following case:

\begin{lemma} \label{LemNonRoot} \cite[Th.8, p77]{Wei95}
Let $K$ be any field and assume that $\lambda \in K$ is not a root of unity. Then there exists an absolute value $|\cdot|$ defined on the subfield of $K$ generated by $\lambda$ such that $|\lambda| \neq 1$.
\end{lemma}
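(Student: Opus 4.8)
The plan is to pass to the prime subfield $F$ of $K$ (so $F = \Q$ or $F = \mathbb{F}_p$) and to study the subfield $k = F(\lambda)$, distinguishing whether $\lambda$ is transcendental or algebraic over $F$. An absolute value with $|\lambda| \neq 1$ is automatically non-trivial, so it suffices to exhibit in each case a single place of $k$ at which $\lambda$ has non-unit value. The degenerate value $\lambda = 0$ is immediate, since $|0| = 0 \neq 1$ for every absolute value; so I assume $\lambda \neq 0$ from now on.

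First I would treat the transcendental case. If $\lambda$ is transcendental over $F$ then $k \cong F(t)$ is a rational function field, and I would take the $t$-adic valuation $v$ normalised by $v(t) = 1$; the associated absolute value $|P| = c^{-v(P)}$ with any fixed $c > 1$ then gives $|\lambda| = c^{-1} \neq 1$. This also disposes of positive characteristic entirely: if $\mathrm{char}(K) = p > 0$ and $\lambda$ were algebraic over $\mathbb{F}_p$, then $\lambda$ would lie in a finite field $\mathbb{F}_{p^d}$ and hence satisfy $\lambda^{p^d - 1} = 1$, contradicting that $\lambda$ is not a root of unity. Thus the only remaining case is $\lambda$ algebraic over $\Q$, where $k = \Q(\lambda)$ is a number field.

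For the number field case I would split according to whether $\lambda$ is an algebraic unit. If $\lambda$ is not an algebraic unit, then by the fact recalled in Subsection \ref{SubSecRings} (after \cite[Chapters I and II]{Lan94}) some prime ideal $\pid$ of $\mathcal{O}(k)$ satisfies $|\lambda|_{\pid} \neq 1$, and a non-archimedean absolute value does the job. If instead $\lambda$ is an algebraic unit but not a root of unity, I would produce an archimedean place via Kronecker's theorem: should every conjugate of $\lambda$ have modulus $1$, then for each $n$ the power $\lambda^n$ would be an algebraic integer all of whose conjugates lie on the unit circle, so the coefficients of its minimal polynomial would be bounded integers; the minimal polynomials of the $\lambda^n$ would therefore form a finite set, and the pigeonhole principle would force $\lambda^m = \lambda^n$ for some $m \neq n$, i.e. $\lambda^{m-n} = 1$, a contradiction. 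Hence some embedding $\iota : k \hookrightarrow \C$ satisfies $|\iota(\lambda)| \neq 1$, and $|x| = |\iota(x)|$ is the sought absolute value.

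The routine verifications --- that the $t$-adic and $\pid$-adic recipes define absolute values, and that the complex modulus pulled back along an embedding is one --- I would take for granted. The substance lies in the algebraic-unit subcase, and the main obstacle is Kronecker's theorem; its proof rests precisely on the boundedness-and-finiteness argument for the minimal polynomials of the powers $\lambda^n$. The non-unit subcase follows at once from the recalled valuation-theoretic fact, while the transcendental and positive-characteristic cases are elementary.
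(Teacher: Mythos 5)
Your proof is correct, but it takes a genuinely different route from the paper, which offers no proof at all: the lemma is simply quoted from Weil \cite[Th.~8, p.~77]{Wei95}, where it is established for A-fields (number fields and function fields over finite fields) by an adelic argument --- the elements of $K^\times$ having absolute value $1$ at every place form a subgroup that is both discrete and compact in the ideles, hence finite, hence made of roots of unity. Your argument replaces this by an elementary case analysis: the $t$-adic valuation disposes of every $\lambda$ transcendental over the prime field (and hence, as you observe, of all of positive characteristic, since a nonzero element algebraic over $\mathbb{F}_p$ lies in a finite field and is a root of unity); the non-unit algebraic case over $\Q$ is exactly the valuation-theoretic fact the paper recalls from \cite{Lan94} in Subsection \ref{SubSecRings}; and the remaining case of an algebraic unit is Kronecker's theorem, which you prove by the standard pigeonhole argument on the minimal polynomials of the powers $\lambda^n$ (one small point worth making explicit there: the set of these polynomials is finite because, in addition to the coefficient bound, the degrees are uniformly bounded by $[\Q(\lambda):\Q]$, since $\lambda^n \in \Q(\lambda)$). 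What your route buys is self-containedness and even slightly greater scope: when $\lambda$ is transcendental over $\Q$, the field $\Q(\lambda) \cong \Q(t)$ is not an A-field, so the bare citation of Weil leaves that (easy) case implicitly to the reader, whereas your $t$-adic valuation handles it directly. What the citation buys is brevity and a single uniform statement treating all places of an A-field at once.
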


\subsection{The case of a single element} \label{SubSecSingle}

\paragraph{Roots of unity}

\begin{proposition} \label{PropRoot} Let $\lambda \in K_p\setminus \{1\}$ be a primitive $m$-th root of unity. Let $(S_n)$ be a sequence
of generating pairs of $\G$ and let $(k_n,l_n)=\sigma(S_n)$. We denote by $\ok_n$ and $\ol_n$ the images of $k_n$ and $l_n$ in $\Z/m\Z$. 
\begin{itemize}
\item[$(i)$] The sequence $(\G,S_n)$ converges
if and only if one of the following conditions holds:

\begin{itemize}
\item[$(1)$] $(|k_n|,|l_n|)$ is stationary;
\item[$(2)$] $ \vert k_n\vert +\vert l_n \vert$ tends to infinity and there is $(\ok,\ol) \in (\Z/m\Z)^2$ such that $(\ok_n,\ol_n)=(\ok,\ol)$
 for all $n$ large enough.
\end{itemize}
\item[$(ii)$]
 If $(\G,S_n)$ satisfies $(2)$ then it converges to $M(\aid) \cdot \psi_{\ok,\ol}$ with $$\aid=(\Phi_m(x),y-1,p)$$ 
and $\psi_{\ok,\ol}$ is a Nielsen transformation statisfying $(\ok,\ol) \cdot \psi_{\ok,\ol}=(\overline{1},\overline{0})$.
\end{itemize} 
\end{proposition}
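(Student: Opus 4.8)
The plan is to test convergence with the criterion of Lemma \ref{lmecv}: the sequence $(\G,S_n)$ converges in $\M$ if and only if, for every $w\in F(a,b)$, the truth value of ``$w(S_n)=1$ in $\G$'' is eventually constant. Since $\G$ is metabelian, $w(S_n)$ depends only on the image of $w$ in $M$, so I may write $w=c^{W(x,y)}b^va^u$ with $(u,v)\in\Z^2$ and $W\in\Zxy$. Reproducing the computation in the proof of Lemma \ref{LemConvAu} (with $\lambda_n=\lambda$ constant) gives $w(S_n)=b^{r_n}a^{uk_n+vl_n}$ in $\G$, where $r_n=\Delta(S_n)\,W(\lambda^{k_n},\lambda^{l_n})$; as $\Delta(S_n)$ is a unit by Proposition \ref{PropDelta}, we obtain
\[
w(S_n)=1 \iff uk_n+vl_n=0 \ \text{ and }\ W(\lambda^{k_n},\lambda^{l_n})=0.
\]
The essential observation is that, $\lambda$ being a primitive $m$-th root of unity, $\lambda^{k_n}=\lambda^{\ok_n}$ and $\lambda^{l_n}=\lambda^{\ol_n}$ depend only on the residues $\ok_n,\ol_n\in\Z/m\Z$.

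Next I would decouple the two conjuncts. Specialising to $W=0$ shows that convergence forces condition $(A)$: for every $(u,v)\in\Z^2$ the integer equation $uk_n+vl_n=0$ has eventually constant truth value. Specialising to $u=v=0$ forces condition $(B)$: for every $W\in\Zxy$ the equation $W(\lambda^{\ok_n},\lambda^{\ol_n})=0$ has eventually constant truth value. Conversely, if both hold, then past a common index each conjunct is constant, hence so is their conjunction, and the sequence converges. Now $(B)$ is precisely the convergence in $\Rd$ of the kernels $\aid_n$ of $x\mapsto\lambda^{\ok_n},\,y\mapsto\lambda^{\ol_n}$ (the ring form of Lemma \ref{lmecv}); since $(\ok_n,\ol_n)$ ranges over the finite set $(\Z/m\Z)^2$, the sequence $(\aid_n)$ takes only finitely many values, so $(B)$ amounts to $(\aid_n)$ being eventually constant.

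The heart of the matter is the arithmetic condition $(A)$. Using $\gcd(k_n,l_n)=1$, the solutions $(u,v)$ of $uk_n+vl_n=0$ form the rank-one primitive sublattice $(k_n,l_n)^{\perp}=\Z(l_n,-k_n)$. If $|k_n|+|l_n|\to\infty$, then each fixed nonzero $(u,v)$ is orthogonal to at most two primitive vectors, each attained finitely often, so $uk_n+vl_n=0$ holds for only finitely many $n$ and $(A)$ is automatic; combined with $(B)$ this is exactly condition $(2)$. If instead $|k_n|+|l_n|\not\to\infty$, the sequence is in the bounded regime and takes finitely many values in the discrete set $\br{\G}$; by choosing $(u,v)$ orthogonal to a primitive vector attained infinitely often, and playing off a bounded against an unbounded subsequence, I would show that $(A)$ together with $(B)$ forces the marked group $(\G,S_n)$ to be eventually constant, whence $(|k_n|,|l_n|)$ is stationary, which is condition $(1)$. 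I expect this lattice/primitive-vector case distinction, and the careful gluing of $(A)$ and $(B)$, to be the principal obstacle.

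For assertion $(ii)$, assume $(2)$. Then $|k_n|+|l_n|\to\infty$ and $(\aid_n)$ is eventually equal to $\aid=\ker(x\mapsto\lambda^{\ok},\,y\mapsto\lambda^{\ol})$, so $\au=\aid$ for every free ultrafilter $\ufrak$ and Lemma \ref{LemConvAu} yields $\lim(\G,S_n)=M(\aid)$. To reach the normal form I would pick a Nielsen transformation $\psi_{\ok,\ol}$ with $(\ok,\ol)\cdot\psi_{\ok,\ol}=(\overline 1,\overline 0)$, which exists because $\gcd(k_n,l_n)=1$ makes $(\ok,\ol)$ unimodular modulo $m$ and $\GL(2,\Z)$ surjects onto $\GL(2,\Z/m\Z)$. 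Transporting along the continuous right $\Aut(F_2)$-action and invoking $M(\aid)\cdot\psi=M(\psi^{-1}\cdot\aid)$ from Lemma \ref{LemDisc}, the computation reduces to the kernel of $x\mapsto\lambda,\,y\mapsto 1$, namely $(\Phi_m(x),y-1,p)$, and one concludes $\lim(\G,S_n)=M(\Phi_m(x),y-1,p)\cdot\psi_{\ok,\ol}$.
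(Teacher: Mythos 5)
Your strategy --- testing convergence word by word via Lemma \ref{lmecv} and decoupling a $\Z^2$-condition (A) from a ring condition (B) --- is a genuinely different route from the paper's, which computes all subsequential limits at once through the ultrafilter Lemma \ref{LemConvAu} and gets the necessity direction from the discreteness of $\br{\G}$ (Lemma \ref{LemDisc}.$iii$) together with the rigidity of generating pairs (Proposition \ref{PropDelta}.$iii$ and Remark \ref{RemRec}). Your sufficiency argument for $(2)$ and your part $(ii)$ do essentially match the paper's. But the necessity half of your decoupling rests on a false identity. The equivalence $w(S_n)=1 \iff (uk_n+vl_n=0 \text{ and } W(\lambda^{k_n},\lambda^{l_n})=0)$ fails whenever $(u,v)\neq(0,0)$: the images $a_n,b_n$ of $a,b$ have nontrivial $R$-components, which contribute extra terms to the $R$-component of $w(S_n)$. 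Concretely, for $w=b$ (so $W=0$, $(u,v)=(0,1)$) one has $w(S_n)=b_n\neq1$ for every $n$ (else $G$ would be cyclic), whereas your right-hand side holds whenever $l_n=0$. What is true --- and is all that the paper's Lemma \ref{LemConvAu} actually uses --- is the implication $w(S_n)=1\Rightarrow uk_n+vl_n=0$, plus the clean equivalence in the case $u=v=0$. Consequently ``specialising to $W=0$'' does not show that convergence forces (A); indeed, word conditions with $(u,v)\neq(0,0)$ only control $uk_n+vl_n$ modulo $m$ (for instance $\br{b^va^u,c}(S_n)=1$ if and only if $uk_n+vl_n\equiv 0 \pmod m$). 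To recover (A) from convergence one needs exactly the marked-group input of the paper: if $uk_n+vl_n=0$ along a subsequence, then $(k_n,l_n)$ equals $\pm$ a fixed primitive vector there, so the limit lies in the discrete set $\br{\G}$, and Proposition \ref{PropDelta}.$iii$ with Remark \ref{RemRec} then pins down $\sigma(S_n)$ up to sign for all large $n$. This ingredient is missing from your proposal; relatedly, you never address why $(1)$ itself implies (A)$\wedge$(B), i.e. the sufficiency of $(1)$.

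There is a second gap in the unbounded regime: you identify (B), i.e. eventual constancy of the kernels $\aid_n$, with eventual constancy of the residues $(\ok_n,\ol_n)$, i.e. condition $(2)$. These are not the same, because Galois-conjugate residues yield the same kernel: for $p=0$ one has $\ker(x\mapsto\lambda^{-1},y\mapsto 1)=\ker(x\mapsto\lambda,y\mapsto 1)=(\Phi_m(x),y-1)$ since $\Phi_m(\lambda^{-1})=0$, and in general $(\ok,\ol)$ and $(t\ok,t\ol)$ with $t\in(\Z/m\Z)^{\times}$ (a power of $p$ when $p>0$) give the same ideal, hence the same limit $M(\au)$. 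So ``(B) and $|k_n|+|l_n|\to\infty$'' is strictly weaker than $(2)$, and your chain of equivalences does not close: bridging it would require that distinct residues produce distinct limit ideals, which the example above shows to be false. (Your route makes visible that this step is delicate; it is precisely the point where the paper instead appeals to Lemma \ref{LemDisc}.$iii$.) Finally, a minor slip: $\GL(2,\Z)\to\GL(2,\Z/m\Z)$ is not surjective (its image consists of classes of determinant $\pm1$); the Nielsen transformation $\psi_{\ok,\ol}$ exists simply because the coprime integer pair $(k_n,l_n)$ can be completed to a matrix of $\GL(2,\Z)$, whose reduction then carries $(\ok,\ol)$ to $(\overline{1},\overline{0})$.
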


\begin{proof}
To begin with, we prove that the sequence $(G(\lambda),S_n)$
converges to the limit prescribed by $(ii)$ if $(2)$ holds. Replacing $S_n$ by $S_n \cdot \psi_{\ok,\ol}$, we can assume that $(\ok,\ol) =(\overline{1},\overline{0})$. It suffices to show that $\au=(\Phi_m(x),y-1)$ for any free ultrafilter $\ufrak$ on $\N$ by Lemma \ref{LemConvAu}.
Let $W \in \Zxy$. The polynomial $W$ belongs to $\au$ if and only if  $W(\lambda,1)=0$ holds in $K_p$, i.e. $W \in (\Phi_m(x),y-1,p)$. Thus condition $(2)$ is a sufficient condition of convergence, and clearly so is $(1)$.

Consider now a converging sequence $(\G,S_n)$. If $(|k_n|,|l_n|)$ has a constant subsequence, then $(\G,S_n)$ must converge to $(\G,S)$ for some generating pair $S$ of $\G$ by Proposition \ref{PropDelta}.$iii$ and Remark \ref{RemRec}. Else $(S_n)$ has a subsequence which satisfies $(2)$. It follows from Lemma \ref{LemDisc}.$iii$ that $(S_n)$ must satisfy either $(1)$ or $(2)$.
\end{proof}

\paragraph{Infinite order case}

\begin{theorem} \label{ThConv} Let $\lambda \in \C \setminus \{0\}$ be an
an algebraic number over the rationals which is not a root of unity. Let $(S_n)$ be a sequence
of generating pairs of $\G$ and let $(k_n,l_n)=\sigma(S_n)$.
\begin{itemize}
\item[$(i)$] The sequence $(\G,S_n)$ converges
if and only if one of the following conditions holds:

\begin{itemize}
\item[$(1)$] There is a sequence $(\epsilon_n) \in \{\pm 1\}^{\N}$ such that $(\epsilon_n k_n, \epsilon_n l_n)$ is stationary, $(\epsilon_n)$ being moreover stationary when $\lambda$ is not reciprocal;
\item[$(2)$] $ \vert k_n\vert +\vert l_n \vert$ tends to infinity
and there is $(\alpha,\beta)\in \Z^2 \setminus \{(0,0)\}$ and
$\gamma\in \Z$ such that: $$\alpha k_n+\beta l_n=\gamma \text{ for
all } n \text{ large enough}.$$
\item[$(3)$]$\vert k_n\vert +\vert l_n \vert$ tends to infinity and for all $(\alpha,\beta) \in \Z^2 \setminus \{(0,0)\}$ and all
$\gamma\in \Z$ we have: $$\alpha k_n+\beta l_n \neq \gamma \text{ for
all } n \text{ large enough}.$$
\end{itemize}
\item[$(ii)$]
\begin{itemize} \item[$\bullet$] If $(2)$ holds, with the normalization $\gamma>0$ and $\gcd(\alpha,\beta)=1$, then $(\Gl,S_n)$ converges
to $M(P_{\lambda^{\gamma}}(x^{\alpha}y^{\beta}))$ where $P_{\lambda^{\gamma}} \in \Z \br{x}$ is the minimal polynomial of $\lambda^{\gamma}$ over $\Z$.
\item[$\bullet$] If $(3)$ holds then $(G(\lambda),S_n)$ converges to $M$.
\end{itemize}
\item[$(iii)$] Let $\lambda \in K_p$ be transcendant over $\Z_p$. Then $(\G,S_n)$
 converges to $M(p)$ if and only if $|k_n|+|l_n|$ tends to infinity.
\end{itemize}
\end{theorem}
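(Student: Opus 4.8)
The plan is to reduce everything to the computation of the limit ideals $\au$ attached to the generating pairs through Lemma \ref{LemConvAu}. In the three regimes $(2)$, $(3)$ and the transcendental case $(iii)$ one has $|k_n|+|l_n|\to\infty$, so Lemma \ref{LemConvAu}$(i)$ gives $\lim_{\ufrak}(\Gl,S_n)=M(\au)$ for every free ultrafilter $\ufrak$; since a sequence of $\M$ converges iff all its ultrafilter limits agree, the whole point is to show that $\au$ is independent of $\ufrak$ and to identify it. One inclusion is immediate: under $x\mapsto(\lambda^{k_n})$, $y\mapsto(\lambda^{l_n})$ the monomial $x^{\alpha}y^{\beta}$ goes to $(\lambda^{\alpha k_n+\beta l_n})$, which in case $(2)$ is the constant $\lambda^{\gamma}$, so $P_{\lambda^{\gamma}}(x^{\alpha}y^{\beta})\in\au$. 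The substance is the reverse inclusion, an algebraic independence statement.

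For this I would reuse the freeness machinery of Subsection \ref{SubSecCrit}, noting first that Proposition \ref{PropCrit} cannot be quoted verbatim: here $R_n=\Z\br{\lambda^{\pm1}}$ is constant and $(\lambda_n)=(\lambda)$ has a constant subsequence, which the proposition forbids. So I redo its proof while tracking the algebraic relation satisfied by $\lambda$. Since $\lambda$ is not a root of unity, Lemma \ref{LemNonRoot} yields an absolute value on $\Q(\lambda)$ with $|\lambda|\neq1$, and after possibly replacing $\lambda$ by $\lambda^{-1}$ I may assume $|\lambda|>1$; for a constant ring every nonzero $a$ has $|a|$ a fixed nonzero number, so the hypotheses on $A$ in Lemmas \ref{LemDom} and \ref{LemFi} hold trivially. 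In case $(3)$, where $\{(k_n),(l_n)\}$ is a subsequence-free basis without constant sequences, Lemma \ref{LemFi} (with $A=\Z$) shows that $\{(\lambda^{k_n}),(\lambda^{l_n})\}$ generates a subsequence-free subgroup, i.e. is algebraically free, whence $\au=(0)$ and the limit is $M=M((0))$. In case $(2)$, using $\gcd(\alpha,\beta)=1$ I apply the $\GL(2,\Z)$ change of variables carrying $x^{\alpha}y^{\beta}$ to a new first variable, after which $x\mapsto\lambda^{\gamma}$ (constant, with $\gamma\neq0$ by coprimality of $(k_n,l_n)$) and $y\mapsto\lambda^{m_n}$ with $|m_n|\to\infty$. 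Lemma \ref{LemDom}, applied to $\F=\{(\lambda^{im_n}):i\geq1\}$ over $A=\Z\br{\lambda^{\pm\gamma}}$, shows $(\lambda^{m_n})$ transcendental over $A$; reducing an arbitrary $W$ modulo $P_{\lambda^{\gamma}}$ and invoking this transcendence forces its coefficients to vanish at $\lambda^{\gamma}$, so $\au=(P_{\lambda^{\gamma}}(x^{\alpha}y^{\beta}))$, and undoing the change of variables (Lemma \ref{LemDisc}$(ii)$) gives the limit in $(ii)$. Case $(iii)$ is the same computation with $\lambda^{\gamma}$ now transcendental over $\Z_p$, so both branches collapse to $\au=(p)$ and the limit is $M(p)$; for the converse of $(iii)$ one notes that if $|k_n|+|l_n|$ stays bounded the limit lies in $\br{\Gl}$, which excludes $M(p)$ because $\Gl\cong\Z_p\wr\Z=G(p)$ has abelianization $\Z_p\oplus\Z$ whereas $M(p)=M((p))$ has abelianization $\Z^2$.

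For the necessity half of $(i)$ I split on the behaviour of $|k_n|+|l_n|$. If it does not tend to infinity it has a constant subsequence $\sigma(S_n)=(k,l)$; by Proposition \ref{PropDelta}$(ii)$ these markings coincide, so the subsequential limit lies in $\br{\Gl}$. As $\Gl$ is finitely presented in the metabelian variety and Hopfian, it is isolated in $\br{\Gl}$ (Lemma \ref{LemDisc}$(iii)$), so convergence to this point forces $(\Gl,S_n)$ to be eventually equal to it; Proposition \ref{PropDelta}$(iii)$ and Remark \ref{RemRec} then give $(k_n,l_n)=\epsilon_n(k,l)$ eventually, with $(\epsilon_n)$ constant unless $\lambda$ is reciprocal, i.e. condition $(1)$. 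If instead $|k_n|+|l_n|\to\infty$, convergence and Lemma \ref{LemConvAu}$(i)$ force $\au=\aid$ to be independent of $\ufrak$, hence $\aid_n\to\aid$ in $\Rd$. Assuming $(3)$ fails, some nontrivial relation $\alpha k_n+\beta l_n=\gamma$ (with $\gcd(\alpha,\beta)=1$) holds on an infinite set, and the corresponding subsequence converges to $M(P_{\lambda^{\gamma}}(x^{\alpha}y^{\beta}))$ by the computation above, so $\aid=(P_{\lambda^{\gamma}}(x^{\alpha}y^{\beta}))$. Since this generator then lies in $\au$ for every $\ufrak$, it lies in $\aid_n$ for all large $n$; equivalently $\lambda^{\alpha k_n+\beta l_n}$ is a Galois conjugate of $\lambda^{\gamma}$ for all large $n$.

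The main obstacle is exactly this last step: upgrading \emph{eventually a conjugate of $\lambda^{\gamma}$} to the single clean relation demanded by $(2)$. The key input is a number-theoretic lemma: for $\lambda$ not a root of unity, $\lambda^{a}$ and $\lambda^{b}$ are Galois conjugate only if $b=\pm a$. I would prove it by choosing a field automorphism $\tau$ with $\tau(\lambda^{a})=\lambda^{b}$ and tracking the finitely many places of a splitting field: the relation pairs places $w$ and $\tau^{-1}w$ via $a\log|\lambda|_{\tau^{-1}w}=b\log|\lambda|_{w}$, and iterating the permutation $\tau$ (of some finite order $N$) yields $\bigl((b/a)^{N}-1\bigr)\log|\lambda|_{w}=0$ for all $w$; as some $\log|\lambda|_{w}\neq0$ (Lemma \ref{LemNonRoot}), this forces $(b/a)^{N}=1$, hence $b=\pm a$. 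This pins $\alpha k_n+\beta l_n$ into $\{\gamma,-\gamma\}$ for large $n$, the value $-\gamma$ occurring only when $\lambda$ is reciprocal, precisely the asymmetry recorded in Remark \ref{RemRec} and Proposition \ref{PropDelta}$(iii)$; reconciling this residual sign with the single $\gamma$ of condition $(2)$ is the delicate bookkeeping, after which $(2)$ follows and $(ii)$ is read off from the ideal already identified in each regime.
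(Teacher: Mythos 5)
Your sufficiency half is organized exactly as the paper's: everything is funneled through Lemma \ref{LemConvAu} and the identification of $\au$. Two remarks on the details. For case $(2)$ the paper is lighter than you: after the same Nielsen reduction to $k_n=\gamma$, it just notes that $W\in\au$ iff $W(\lambda^{\gamma},\lambda^{l_n})=0$ for $\ufrak$-many $n$, and since $\lambda$ is not a root of unity the $\lambda^{l_n}$ are infinitely many \emph{distinct} points, so the one-variable Laurent polynomial $W(\lambda^{\gamma},y)$ vanishes identically and $W\in(P_{\lambda^{\gamma}}(x))$; no domination or absolute values are needed. Conversely, for case $(3)$ you are more scrupulous than the paper, which cites Proposition \ref{PropCrit} although its stated hypothesis excludes constant $(\lambda_n)$; your direct rerun of Lemma \ref{LemFi} via Lemma \ref{LemNonRoot} is the honest version of what the paper intends. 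For the necessity half you genuinely diverge: the paper argues by contradiction, extracting from the complement of $I=\{n:\alpha k_n+\beta l_n=\gamma\}$ a further subsequence satisfying $(2)$ with parameters $(\alpha',\beta',\gamma')$ and asserting that equality of the limits forces $(\alpha',\beta',\gamma')=(\alpha,\beta,\gamma)$; you instead read off from patch convergence that $\lambda^{\alpha k_n+\beta l_n}$ is eventually a conjugate of $\lambda^{\gamma}$, and your lemma that $\lambda^{a}$ conjugate to $\lambda^{b}$ forces $b=\pm a$ is correct (your places argument works; Weil heights give it in one line, since $h(\lambda^{a})=|a|h(\lambda)$ and $h(\lambda)>0$ by Kronecker).

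The genuine gap is exactly the step you deferred as ``delicate bookkeeping'': it cannot be carried out, because the implication you want is false when $\lambda^{\gamma}$ is reciprocal. Take $\lambda$ a reciprocal algebraic number which is not a root of unity (a root of $x^{2}-3x+1$, say) and $\sigma(S_n)=((-1)^{n},n)$. Since $P_{\lambda^{-1}}=P_{\lambda}$, the even and odd subsequences of ideals both converge in $\Rd$ to $(P_{\lambda}(x))$, so $(\Gl,S_n)$ converges to $M(P_{\lambda}(x))$; yet $(1)$, $(2)$ and $(3)$ all fail, the sign of $k_n$ oscillating forever. So ``$\alpha k_n+\beta l_n\in\{\gamma,-\gamma\}$ eventually'' is the best obtainable conclusion, and it does not yield $(2)$ as stated. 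In the non-reciprocal case your lemma does close the argument, since $\lambda^{-\gamma}$ is then not a conjugate of $\lambda^{\gamma}$ and the value $-\gamma$ is excluded. To be fair, the paper's own proof founders at the very same spot: its claim that $(\alpha',\beta',\gamma')=(\alpha,\beta,\gamma)$ ``follows easily from Lemma \ref{LemDisc}'' is wrong for reciprocal $\lambda^{\gamma}$, because $(-\alpha,-\beta,\gamma)$ generates the same ideal, and the displayed counterexample defeats the theorem's ``only if'' direction as written; the fix is to allow $\alpha k_n+\beta l_n=\pm\gamma$ in $(2)$ with a sign proviso mirroring the one in $(1)$, after which both arguments go through. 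Two smaller points: you never actually prove sufficiency of $(1)$ (it is the same Proposition \ref{PropDelta}$(ii)$ plus Remark \ref{RemRec} argument you use elsewhere), and your converse for $(iii)$ distinguishes $M(p)$ from $G(p)$ by abelianizations, which fails at $p=0$ where both $M$ and $\Z\wr\Z$ have abelianization $\Z^{2}$; use instead that $\Z\wr\Z$ is a proper quotient of the Hopfian group $M$.
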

Let  $M(\alpha,\beta,\gamma)=M(P_{\lambda^{\gamma}}(x^{\alpha}y^{\beta}))$. Note that $M(\alpha,\beta,\gamma)=M(P_{\lambda^{\gamma}}(x)) \cdot \psi_{\alpha,\beta}$ for some Nielsen transformation $\psi_{\alpha,\beta}$ depending only on $(\alpha,\beta)$.

\begin{sproof}{Proof of Theorem \ref{ThConv}}
First we prove that the sequence $(\Gl,S_n)$
converges to the limit prescribed by $(ii)$ if one of the
conditions $(1)$, $(2)$ or $(3)$ holds.

If $(1)$ holds, it is a consequence of the remark following Proposition \ref{PropCrit}.

\it Condition $(2)$\rm: There are $(\alpha,\beta)\neq (0,0)\,,\gamma>0$ such that $\gcd(\alpha,\beta)=1$ and $\alpha
k_n+\beta l_n=\gamma$ for $n$ large enough. Set
$$G_n=(\G,S_n),\, \aid=(P_{\lambda^{\gamma}}(x^{\alpha}y^{\beta}))$$ 
Possibly replacing $G_n$ by $G_n \cdot \psi$ and $\Ma$ by $\Ma \cdot \psi$ 
for some Nielsen transformation $\psi$, we can assume that $\aid=(P_{\lambda^{\gamma}}(x))$ and $k_n=\gamma$ for $n$ large enough.
Let $\ufrak$ be a free ultra-filter on $\N$. It suffices to show that $\au=(P_{\lambda^{\gamma}}(x))$ by Lemma \ref{LemConvAu}.
Let $W \in \Zxy$. The polynomial $W$ belongs to $\au$ if and only if there is $I \in \ufrak$ such that $W(\lambda^{\gamma},\lambda^{l_n})=0$ holds in $\C$ for every $n \in I$. As $\lambda$ is not a root of unity, this is equivalent to $W \in (P_{\lambda^{\gamma}}(x))$.

\it{Condition $(3)$}\rm: By Lemma \ref{LemNonRoot}, there is some absolute value defined on $\Z\br{\lambda^{\pm 1}} \subset \C$ such that $|\lambda| \neq 1$. It follows from Lemma \ref{LemConvAu} and Proposition \ref{PropCrit} that $(G_n)$ converges to $M$.

 Now, we prove that one of the conditions $(1)$, $(2)$
and $(3)$ is necessary, provided that $G_n=(G(\lambda),S_n)$
converges to $(G,S)$. Let us consider two cases.

\it{ Case 1}\rm: $(G,S)=(\G,S)$. As $(\G,S)$ is
isolated in $\br{G}$ by Lemma \ref{LemDisc}, the sequence $(G_n)$ is stationary and hence
$(1)$ holds by Proposition \ref{PropDelta}.$iii$ and \ref{RemRec}.

\it{ Case 2}\rm: $(G,S) \neq (\G,S)$. For any subsequence of $(G_n)$, the condition $(1)$ does not
hold, so that $ \vert k_n\vert +\vert l_n \vert$ tends to infinity.
 We reason by contradiction, assuming
that neither $(2)$ nor $(3)$ holds. Since $(3)$ does not hold, there
is a subsequence of $(G_n)$ for which $(2)$ holds. Hence we have
$G=M(\alpha,\beta,\gamma)$ for some $(\alpha,\beta,\gamma)
\in \Z^3$, with $\gcd(\alpha,\beta)=1$ and $\gamma>0$. The set $I=\{n
\in \N \, \vert \, \alpha k_n+\beta k_n=\gamma \}$ is then infinite.
As $(2)$ does not hold, there is an increasing sequence $(n_k)$ with $n_k \in \N \setminus I$. As $M \neq
M(\alpha,\beta,\gamma)$, condition $(3)$ does not hold for $(G_{n_k})$. Thus, we can extract from it another
subsequence $(G_{n_{k_l}})$ for which $(2)$ holds. This subsequence
converges to some $M(\alpha',\beta',\gamma')$ with $\gcd(\alpha',\beta')=1$ and $\gamma'>0$, which must coincide with $M(\alpha,\beta,\gamma)$. It follows easily from Lemma \ref{LemDisc} that $(\alpha',\beta',\gamma')=(\alpha,\beta,\gamma)$. Consequently, we have
$n_{k_l} \in I$ for all $l$ large enough. This contradicts the fact that $n_{k_l}\in \N \setminus I$ for all $l$.

Proof of $(iii)$: The very same arguments used in $(i)$ and $(ii)$ shows that $(1),\,(2)$ and $(3)$ 
are sufficient conditions of convergence and that the limit is $M(p)$ in every case. 
As any subsequence of $(\G,S_n)$ has a subsequence which satisfies  one these conditions, the result follows.
\end{sproof}

\begin{remark}\label{RemTop}
Proposition \ref{PropRoot} and Theorem \ref{ThLimMa} enable us to depict $E=\overline{\br{\Gl}}$ as a topological space on which $\Aut(F_2)$ acts continuously. We see indeed that $E_0=\br{\Gl}$ is the set of isolated points in $E$, i.e. the points of Cantor-Bendixson rank $0$. 

The set $E_0$ accumulates on $E_1=\br{M(\Phi_m(x),y-1)}$ if $\lambda$ is a root of unity distinct from $1$, else $E_0$ accumulate on $E_1=\bigcup_{k>0} \br{M(P_{\lambda^k})}$, $E_1$ being the set of points with Cantor-Bendixson rank $1$ in each case. In the latter case $E_1$ accumulates in turn on $E_2=\{M\}$, the unique point of Cantor-Bendixson rank $2$.
Thus, if $\lambda$ is a $m$-th root of unity $(m \ge 2)$ then $E$ is homeomorphic to the finite disjoint union $\bigsqcup_{(\ok,\ol) \in \V(\Z/m\Z,2)}\overline{\N}$ where $\overline{\N}=\N \cup \{\infty\}$ is the one-point compactification of the discrete space $\N$.
Else $E$ is homeomorphic to $\overline{\N}^2$ and $M$ identifies with $(\infty,\infty)$ while the isomorphism classes $\br{M(P_{\lambda^k})}$ identify with some infinite subsets partitionning $(\{\infty\} \times \N) \cup (\N \times \{\infty\}) \setminus \{(\infty,\infty)\}$.

The closure of $\br{\Gl : \lambda \in X}$ enjoys a similar description for algebraically tame subsets $X \subset \C \setminus \{0\}$ (Corollary \ref{CorBounded}).
\end{remark}

\subsection{The closure of $\Cp$} \label{SubSecClosure}
Recall that $\Cp$ is the class of two-generated groups of the form $R \rtimes \Z$ with $R$ an integral domain of characteristic $p$.
For $\lambda \in K_p \setminus \{0\}$, we set $$G(\lambda)=\Z_p\br{\lambda^{\pm 1}} \rtimes \Z$$ where $\Z$ acts by multiplication of $\lambda$ on $\Z_p\br{\lambda^{\pm 1}}$. It is marked by the \emph{canonical pair} $(a,b)$ where $a$ is the positive cyclic generator of the right factor $\Z$ and $b$ is the identity of the ring $\Z_p\br{\lambda^{\pm 1}}$.
We observe that $G(\lambda)$ is isomorphic to $G(\mu)$ as a marked group if $\lambda,\mu \in K_p \setminus \{0\}$ are either both transcendant or both algebraic and $\lambda$ is a conjugate of $\mu^{\pm1}$ (see Lemma \ref{LemThGl} for the converse). 

As in Subsection \ref{SubSecFrom}, we associate to every sequence of marked groups $(G(\lambda_n),S_n)$ with $\sigma(S_n)=(k_n,l_n)$ the \emph{corresponding sequence} $(\aid_n)$ of ideals, $\aid_n \subset \Zxy$ being the kernel of the ring homomorphism induced by the map $x \mapsto \lambda_n^{k_n},\,y \mapsto \lambda_n^{l_n}$. 
The following lemma shows that the case of stationary sequences of ideals is ruled out by Theorem \ref{ThLimMa}.

\begin{lemma} \label{LemSta}
Let $(G(\lambda_n),S_n)$ be such that $\lambda_n \in K_p \setminus \{0\}$ is algebraic over $\Z_p$ for every $n$. If the corresponding sequence $(\aid_n)$ is stationary, then there are infinitely many $\lambda_n$ which are pairwise conjugates.
\end{lemma}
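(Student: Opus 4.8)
The plan is to unwind the definitions and exploit the fact that a stationary sequence of ideals $(\aid_n)$ forces all the rings $R_n = \Z_p[\lambda_n^{\pm 1}]$, together with their chosen generators, to satisfy exactly the same polynomial relations. More precisely, recall that $\aid_n$ is the kernel of the map $\Zxy \to R_n$ sending $x \mapsto \lambda_n^{k_n}$ and $y \mapsto \lambda_n^{l_n}$. Since the sequence is stationary, there is a fixed ideal $\aid \subset \Zxy$ with $\aid_n = \aid$ for all $n$ large enough. The first step is to read off from $\aid$ an honest univariate constraint on each $\lambda_n$: because $\gcd(k_n,l_n)=1$, the subring of $R_n$ generated by $\lambda_n^{k_n}$ and $\lambda_n^{l_n}$ is all of $R_n$ (one can recover $\lambda_n$ from these two powers via a B\'ezout relation), so the map $\Zxy \to R_n$ is surjective and $R_n \cong \Zxy/\aid$ as a marked ring, independently of $n$.

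Next I would extract the minimal polynomial. Since each $\lambda_n$ is algebraic over $\Z_p$, the ring $R_n = \Z_p[\lambda_n^{\pm 1}]$ is a finite-dimensional $\Z_p$-algebra, in fact a finite extension field of $\Z_p$ in the relevant characteristic-$p$ case, or more generally an integral domain that is module-finite over $\Z_p$. The quotient $\Zxy/\aid$ therefore has a fixed, finite structure that does not depend on $n$; in particular the characteristic is a fixed $p$ and the $\Z_p$-dimension is some fixed integer $d$. The element $\lambda_n$ lies in $R_n$ and satisfies a monic-up-to-units polynomial over $\Z_p$ whose degree is bounded by $d$ and whose coefficients are determined by the multiplication table of $\Zxy/\aid$ in the basis coming from $x,y$. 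The key point is that this minimal polynomial $P_{\lambda_n}$ is the \emph{same} for all $n$ (up to the usual normalization), because it is computed purely from the fixed isomorphism type of the marked ring $\Zxy/\aid$ and the fixed image of $\lambda_n$ therein.

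Once I know that all the $\lambda_n$ (for $n$ in a cofinite set) share one and the same minimal polynomial $P$ over $\Z_p$, each $\lambda_n$ is a root of $P$, hence a conjugate of each other, and there are only finitely many such roots, namely $\deg P \le d$ of them. By the pigeonhole principle, since infinitely many indices $n$ yield roots of the single polynomial $P$, infinitely many of the $\lambda_n$ must coincide or at least fall among this finite set of mutual conjugates; in either reading this produces an infinite subsequence of pairwise conjugate $\lambda_n$, which is exactly the assertion. I would phrase the conclusion so as to cover the transcendental-versus-algebraic bookkeeping already present in the surrounding text: here the hypothesis rules out transcendence, so the finite-root argument applies cleanly.

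The main obstacle I anticipate is the passage from ``the same ideal $\aid_n = \aid$'' to ``the same minimal polynomial for $\lambda_n$'', because the generators of $R_n$ that appear in the ideal are $\lambda_n^{k_n}$ and $\lambda_n^{l_n}$ rather than $\lambda_n$ itself, and the exponents $k_n,l_n$ are a priori allowed to vary with $n$. One must either first reduce to the case where $(k_n,l_n)$ is controlled, or argue that the B\'ezout reconstruction of $\lambda_n$ from $\lambda_n^{k_n},\lambda_n^{l_n}$ expresses $\lambda_n$ as a fixed word in the generators of $\Zxy/\aid$ whenever the pair $\sigma(S_n)=(k_n,l_n)$ is itself stationary; if the $\sigma(S_n)$ are not stationary one should pass to a subsequence or invoke Theorem \ref{ThConv} to see that a non-stationary $\sigma$ together with stationary ideals is incompatible unless the $\lambda_n$ collapse as claimed. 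Handling this exponent dependence carefully, rather than the finiteness count at the end, is where the real work lies.
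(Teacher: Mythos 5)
Your opening move is the same as the paper's: since $\gcd(k_n,l_n)=1$, the evaluation map $\Zxy \to R_n$ is onto, so stationarity of $(\aid_n)$ says exactly that the marked rings $(R_n,\Sigma_n)$, with $R_n=\Z_p\br{\lambda_n^{\pm1}}$ and $\Sigma_n=(\lambda_n^{k_n},\lambda_n^{l_n})$, are eventually all isomorphic to the fixed marked ring $(\Zxy/\aid,(\bar x,\bar y))$. But the step you label as ``the key point'' --- that the minimal polynomial of $\lambda_n$ is determined by ``the fixed isomorphism type of the marked ring and the fixed image of $\lambda_n$ therein'' --- is precisely where the lemma is nontrivial, and your own closing paragraph concedes you have not established it: the image of $\lambda_n$ in $\Zxy/\aid$ is $\bar x^{a_n}\bar y^{b_n}$ for B\'ezout coefficients with $a_nk_n+b_nl_n=1$, so it varies with $n$. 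Neither of your proposed escapes closes this gap. First, non-stationary $(k_n,l_n)$ is \emph{not} incompatible with stationary ideals: take all $\lambda_n$ equal to a fixed primitive $m$-th root of unity and $(k_n,l_n)$ coprime pairs tending to infinity inside fixed residue classes mod $m$; the ideals are then constant, so there is no contradiction to extract --- this case must be handled, not excluded. Second, Theorem \ref{ThConv} concerns a sequence of markings of a \emph{single} group $\Gl$, so it cannot be invoked for a sequence with varying $\lambda_n$. The paper fills exactly this hole with Lemma \ref{LemConj}.$ii$: a marked-ring isomorphism $\varphi:(R_\lambda,(\lambda^k,\lambda^l))\to(R_\mu,(\mu^m,\mu^n))$ satisfies, again by B\'ezout, $\varphi(\lambda)=\mu^u$ and $\varphi^{-1}(\mu)=\lambda^v$, whence $\lambda=\lambda^{uv}$; the rigidity statement of Lemma \ref{LemConj}.$i$ (if $\lambda$ is conjugate to $\lambda^k$ then $k=\pm1$ or $\lambda$ is a root of unity) then pins down $u,v$ and yields that $\lambda$ is conjugate to $\mu$ or $\mu^{-1}$. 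Applying this to all pairs of large indices and pigeonholing over the two classes ``conjugate to $\lambda_N$'' and ``conjugate to $\lambda_N^{-1}$'' for one fixed large $N$ produces the infinitely many pairwise conjugate $\lambda_n$. This B\'ezout-plus-rigidity argument is the actual content of the lemma, and it is absent from your proposal.

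There is also a secondary factual error: your claim that $R_n$ is module-finite over $\Z_p$ fails in characteristic zero --- $\Z\br{2^{\pm1}}=\Z\br{1/2}$ is not a finitely generated $\Z$-module; module-finiteness holds only when $\lambda_n$ is an algebraic unit. One can rescue a fixed degree by tensoring with $\Q$, since $\Q\otimes_{\Z}(\Zxy/\aid)\cong\Q(\lambda_n)$ shows all $\lambda_n$ have the same degree $d$ over $\Q$. But this cannot substitute for the missing step: there are infinitely many pairwise non-conjugate algebraic numbers of any fixed degree, so a degree bound alone yields no pigeonhole. You genuinely need the conjugacy statement of Lemma \ref{LemConj}.$ii$ (or an equivalent analysis of which units of the fixed ring $\Zxy/\aid$ can serve as $\lambda_n$), and that is the work your proposal defers rather than does.
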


The proof of Lemma \ref{LemSta} relies on:

\begin{lemma} \label{LemConj}
Let $\lambda,\mu \in K_p \setminus \{0\}$ be algebraic over $\Z_p$.
\begin{itemize}
 \item[$(i)$]
If $\lambda$ is a conjugate of $\lambda^k$ for some $k \in \Z$, then either $k=\pm 1$ or $\lambda$ is a primitive $m$-th root of unity with $\gcd(m,k)=1$.
\item[$(ii)$] Let $R_{\lambda}=\Z_p \br{\lambda^{\pm 1}},R_{\mu}=\Z_p \br{\mu^{\pm 1}},\, \Sigma_{\lambda}=(\lambda^k,\lambda^l),\Sigma_{\mu}=(\mu^m,\mu^n)$ with $\gcd(k,l)=\gcd(m,n)=1$. If $(R_{\lambda},\Sigma_{\lambda})$ is isomorphic to $(R_{\mu},\Sigma_{\mu})$ as a marked ring, then $\lambda$ is a conjugate of $\mu$ or $\mu^{- 1}$.
\end{itemize}
\end{lemma}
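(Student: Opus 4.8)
The plan is to establish $(i)$ directly and then reduce $(ii)$ to it.

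For $(i)$ I would read ``conjugate'' as ``sharing the same minimal polynomial over the prime field $\Z_p$''. Since the algebraic part of $K_p$ is perfect (it is $\C$ when $p=0$ and $\overline{\mathbb{F}_p}$ otherwise), $P_\lambda$ is separable and its splitting field $L$ is a finite Galois extension of $\Z_p$. The hypothesis $P_\lambda(\lambda^k)=0$ then provides $\sigma\in\operatorname{Gal}(L/\Z_p)$ with $\sigma(\lambda)=\lambda^k$. Because $\sigma$ is a field homomorphism fixing the prime field, iterating gives $\sigma^j(\lambda)=\lambda^{k^j}$, and $\sigma^N=\mathrm{id}$ for $N=[L:\Z_p]$ yields $\lambda^{k^N-1}=1$. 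If $|k|\ge 2$ then $k^N\neq 1$, so $\lambda$ is a root of unity; the cases $k\in\{0,\pm 1\}$ are immediate (with $k=0$ forcing $\lambda=1$). It then remains to observe that a Galois automorphism preserves the multiplicative order, so if $\lambda$ is a primitive $m$-th root of unity, then $\lambda^k$, whose order is $m/\gcd(m,k)$, can be conjugate to $\lambda$ only when $\gcd(m,k)=1$.

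For $(ii)$ I would use the coprimality hypotheses to recover the underlying generators. Choose $a,b$ with $ak+bl=1$ and $c,d$ with $cm+dn=1$. Writing $\lambda=(\lambda^k)^a(\lambda^l)^b$ and applying the marked-ring isomorphism $\phi$, which satisfies $\phi(\lambda^k)=\mu^m$, $\phi(\lambda^l)=\mu^n$ and fixes the prime subring, gives $\phi(\lambda)=\mu^s$ with $s=am+bn$; symmetrically $\phi^{-1}(\mu)=\lambda^t$ with $t=ck+dl$. Since a ring isomorphism sends minimal polynomials to minimal polynomials, $\mu^s=\phi(\lambda)$ is a conjugate of $\lambda$ and $\lambda^t=\phi^{-1}(\mu)$ is a conjugate of $\mu$. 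Composing the two recipes, $\lambda=\phi^{-1}(\mu^s)=(\lambda^t)^s=\lambda^{ts}$, hence $\lambda^{ts-1}=1$.

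I would then split according to whether $\lambda$ is a root of unity. If it is not, then $ts=1$ forces $s=t=\pm 1$, so $\phi(\lambda)=\mu^{\pm 1}$ is a conjugate of $\lambda$ and we are done. If $\lambda$ is a root of unity, then so is $\mu$ (a conjugate of the root of unity $\lambda^t$); comparing the orders $M=\operatorname{ord}(\lambda)$ and $M'=\operatorname{ord}(\mu)$ through the two conjugacies yields $M=M'/\gcd(M',s)$ and $M'=M/\gcd(M,t)$, whence $M=M'$. When $p=0$ all primitive $M$-th roots of unity are roots of the irreducible polynomial $\Phi_M$, so they are mutually conjugate and $\lambda$ is a conjugate of $\mu$, as required. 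I expect the main obstacle to be precisely this root-of-unity case in positive characteristic: over $\Z_p=\mathbb{F}_p$ the Galois group of a cyclotomic extension is generated by Frobenius and does \emph{not} act transitively on the primitive $M$-th roots of unity, so the equality $M=M'$ no longer forces $\lambda$ to be a conjugate of $\mu^{\pm 1}$ by order considerations alone. Handling it requires translating both defining relations $\phi(\lambda^k)=\mu^m$ and $\phi(\lambda^l)=\mu^n$ into Frobenius-orbit congruences modulo $M$ and tracking them simultaneously, rather than relying on the single relation $\phi(\lambda)=\mu^s$; this is the delicate point, and it is consistent with the paper's observation that the conclusions are sharpest when $p=0$.
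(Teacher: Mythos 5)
Your argument coincides with the paper's own proof up to and including the pivot identity: the paper also uses Bezout to produce $u,v$ with $\varphi(\lambda)=\mu^{u}$ and $\varphi^{-1}(\mu)=\lambda^{v}$, deduces $\lambda=\lambda^{uv}$, and then reduces to part $(i)$ (whose proof in the paper is the same as yours, phrased without Galois theory: if $\lambda$ is not a root of unity, finiteness of the conjugate set $\{\lambda^{k^{n}}\}$ forces $\{k^{n}\}$ to be finite, i.e. $k=\pm1$). In characteristic zero your conclusion via irreducibility of $\Phi_{M}$ over $\Q$ is complete, and it is exactly the unstated content behind the paper's terse ``the result follows then from $(i)$''.

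The case you flagged as delicate is in fact fatal: part $(ii)$ as stated is \emph{false} for $p>0$, so the gap you left cannot be closed, and the paper's own proof breaks at the very same spot. Take $p=2$ and let $\lambda=\zeta$ be a primitive $17$-th root of unity in $\overline{\mathbb{F}_{2}}\subset K_{2}$. The subgroup $\langle 2\rangle\subset(\Z/17\Z)^{\times}$ equals $\{1,2,4,8,9,13,15,16\}$; it contains $-1$ but not $3$, so for $\mu=\zeta^{3}$ the element $\lambda$ is conjugate to neither $\mu$ nor $\mu^{-1}$ (conjugates of $\lambda$ over $\mathbb{F}_2$ are exactly the $\lambda^{s}$ with $s\in\langle 2\rangle$). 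Nevertheless, taking $(k,l)=(3,23)$ and $(m,n)=(1,2)$ one has $\gcd(k,l)=\gcd(m,n)=1$ and $\Sigma_{\lambda}=(\zeta^{3},\zeta^{6})=(\mu,\mu^{2})=\Sigma_{\mu}$, while $R_{\lambda}=R_{\mu}=\mathbb{F}_{2}(\zeta)=\mathbb{F}_{2^{8}}$: the two marked rings are literally equal. In your notation $s=6$, $t=3$, $st=18\equiv 1\pmod{17}$, and nothing in the hypotheses forces $s$ or $t$ into $\pm\langle 2\rangle$ modulo $17$, which is what conjugacy would require; note that in characteristic $p$ every nonzero algebraic element is a root of unity, so there part $(ii)$ consists entirely of this bad case. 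The lemma is thus correct exactly where your proof establishes it, namely $p=0$, and that is the only place it is needed at full strength: its application in Lemma \ref{LemSta} survives for $p>0$ by a different argument, since a stationary sequence of ideals forces all the rings $R_{n}$ to be one and the same finite subfield of $K_{p}$, so that infinitely many $\lambda_{n}$ simply coincide; hence the downstream results (Lemma \ref{LemSta} and Theorem \ref{ThLimMa}) are unaffected.
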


\begin{proof}
$(i)$: If $\lambda$ is a root of unity, then $\lambda^k$ must be a root of unity with the same order $m$. Thus $\gcd(m,k)=1$. If $\lambda$ is not a root of unity, the set $\{k^n: n \in \N\}$ must be finite as the set $\{\lambda^{k^n} : n \in \N\}$ is a finite set of conjugates of $\lambda$. Hence $k=\pm 1$.

$(ii)$: By assumption, the map $\lambda^k \mapsto \mu^m,\,\lambda^l \mapsto \mu^n$ induces a ring isomorphism $\varphi$ from $R_{\lambda}$ to $R_{\mu}$ which naturally extends to fraction fields. By Bezout's lemma, we can find $u,v \in \Z$ such that $\varphi(\lambda)=\mu^{u}$ and $\varphi^{-1}(\mu)=\lambda^v$. Thus $\lambda=\lambda^{uv}$. The result follows then from $(i)$.
\end{proof}

\begin{sproof}{Proof of Lemma \ref{LemSta}}
The condition is equivalent to say that the marked rings $(R_n,\Sigma_n)$ are eventually pairwise isomorphic, where $R_n=\Z_p \br{\lambda_n^{\pm 1}}$ and $\Sigma_n=(\lambda_n^{k_n},\lambda_n^{l_n})$. Hence the result follows from Lemma \ref{LemConj}.$ii$. 
\end{sproof}

We are eventually able to prove:

\begin{theorem} \label{ThLimMa} Let $G$ be $2$-generated group. 
\begin{itemize}
 \item[$(i)$] 
If $G$ is an accumulation point in $\M$ of groups of the form $R \rtimes \Z$, then $G$ is either isomorphic to $G(\bid)$ for some ideal $\bid$ of $\Zx$ or it is isomorphic to $M(\aid)$ for some ideal $\aid$ of $\Zxy$.
\item[$(ii)$] If $G$ is an accumulation point of $\Cp$ in $\M$, then $G$ is either isomorphic to $G(p)=\Z_p \wr \Z$ or it is isomorphic to $M(\aid)$ for some prime ideal $\aid \subset \Zxy$ of the form $$(P(x,y),\, p) \text{ or }(\Phi_m(x),\, y-1,\, p)$$ where $\Phi_m(x)$ denotes the $m$-th cyclotomic polynomial over the rationals.
\item[$(iii)$] If the orbit of $\aid$ under the action of $\Aut(F_2)$ contains a prime ideal of the form 
$
(P(x),\, p) \text{ or } (\Phi_m(x),\, y-1,\, p)
$
then $M(\aid)$ is an accumulation point of $\Cp$ in $\M$.
\item[$(iv)$] Assume that $p=0$. Then all groups $M(\aid)$, with $\aid$ as in $(ii)$, are accumulation points of $\mathcal{C}_0$ in $\M$. Moreover, we have:
\begin{enumerate}
\item The group $G$ is a limit of a sequence $(G(\lambda_n))$, where each $\lambda_n$ is a root of unity, if and only if $G$ is isomorphic to one of the groups: $ M,\,\Z \wr \Z,\,M(\Phi_m(x)),\,M(\Phi_m(x),y-1) \quad (m \ge 1)$. In this case, we call $G$ a $\Phi$-limit.
\item The group $G$ is a limit of a sequence $(G(\lambda_n))$, where each $\lambda_n$ is a reciprocal algebraic integer if and only if $G$ is either a $\Phi$-limit or it is isomorphic to $M(P(x,y))$ for some irreducible reciprocal polynomial $P$.
\end{enumerate}
\end{itemize}
\end{theorem}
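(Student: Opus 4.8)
The plan is to push everything through the ideal-convergence dictionary of Lemma \ref{LemConvAu}, splitting according to the behaviour of $(k_n,l_n)=\sigma(S_n)$ along an approximating sequence $(G_n,S_n)$ with $G_n=R_n\rtimes\Z$. If $|k_n|+|l_n|$ is bounded, a subsequence makes $(k_n,l_n)$ stationary and Lemma \ref{LemConvAu}(ii) identifies the limit with $G(\bid)$, $\bid=\lim\bid_{R_n}$; otherwise $|k_n|+|l_n|\to\infty$ and Lemma \ref{LemConvAu}(i) gives $M(\au)$ for every free $\ufrak$ (these agree, the sequence being convergent). Regardless of characteristic this already yields (i). For (ii) I specialise to characteristic $p$. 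In the stationary case I would exclude a stationary ideal sequence: by Lemma \ref{LemSta} and Lemma \ref{LemConj}(ii) this would force infinitely many pairwise-conjugate $\lambda_n$, hence an eventually constant marked group, contradicting that $G$ is a genuine accumulation point; after discarding conjugate repetitions and transcendental terms, Lemma \ref{LemConvAu}(iii) returns $G(p)=\Z_p\wr\Z$. In the unbounded case $\au$ is prime and contains $p$, since $\prod_{\ufrak}R_n$ is an ultraproduct of integral domains of characteristic $p$ and therefore itself such a domain.

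To pin down the shape of $\au$ in (ii) I would examine its image $\bar\au$ in the two-dimensional UFD $\Z_p[x^{\pm1},y^{\pm1}]$. A height-$0$ or height-$1$ image gives $\au=(p)$ or $\au=(P,p)$ with $P$ a lift of an irreducible Laurent polynomial, i.e.\ the form $(P(x,y),p)$. The remaining case is $\bar\au$ maximal, so $\Z_p[x_{\ufrak}^{\pm1},y_{\ufrak}^{\pm1}]$ is a finite field: then $x_{\ufrak}=(\lambda_n^{k_n})$ and $y_{\ufrak}=(\lambda_n^{l_n})$ are algebraic over $\Z_p$, hence roots of unity of bounded order for $\ufrak$-almost all $n$ by {\L}o\'s; as $\gcd(k_n,l_n)=1$ this forces each such $\lambda_n$ to be a root of unity of bounded order, so a subsequence has $\lambda_n$ equal to a fixed primitive $m$-th root of unity. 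On that subsequence Proposition \ref{PropRoot} applies and returns $M(\Phi_m(x),y-1,p)\cdot\psi$, which is exactly the second admissible form up to its $\Aut(F_2)$-orbit, i.e.\ up to abstract isomorphism. This finishes (ii).

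Part (iii) is a construction, and since accumulation in $\Cp$ is $\Aut(F_2)$-invariant I may assume $\aid$ is the given representative. For $\aid=(\Phi_m(x),y-1,p)$ I take $\lambda$ a primitive $m$-th root of unity in $K_p$ and pairs $S_n$ with $(\overline{k_n},\overline{l_n})=(\overline1,\overline0)$ and $|k_n|+|l_n|\to\infty$, so Proposition \ref{PropRoot}(ii) gives convergence to $M(\aid)$ through pairwise non-isomorphic markings. For $\aid=(P(x),p)$, with $\overline P$ irreducible over $\Z_p$ and $\lambda$ a root (necessarily a root of unity of some order $m$), I would take $\lambda_n$ a primitive $mq_n$-th root of unity ($q_n$ prime, $q_n\to\infty$) chosen so that $\lambda_n^{k_n}=\lambda$ while $l_n$ is coprime to $k_n$ and $\lambda_n^{l_n}$ has order tending to infinity; then $x_{\ufrak}=\lambda$ is a root of $\overline P$ and $y_{\ufrak}$ is transcendental over $\Z_p(\lambda)$, so the kernel is exactly $(P(x),p)$ and the limit is $M(\aid)$. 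Passing to (iv) with $p=0$, the cyclotomic form and every $P$ whose orbit meets the one-variable polynomials are covered verbatim by (iii); the heart of the first claim is a $P$ with two-dimensional Newton polygon, which no Nielsen change of variables can flatten. Here I would choose coprime $(k_n,l_n)\to\infty$, solve $P(\lambda^{k_n},\lambda^{l_n})=0$ for an algebraic $\lambda_n$ so that $(\lambda_n^{k_n},\lambda_n^{l_n})$ lies on the curve $C\colon P=0$, giving a limit $M(\au)$ with $(P)\subseteq\au$. The hard part will be forcing equality, i.e.\ genericity: for each $W\notin(P)$ the set $C\cap\{W=0\}$ is finite, so the chosen points must eventually escape every such finite set, which I expect to control through Schinzel's results on cyclotomic factors of lacunary polynomials (equivalently, the scarcity of torsion and multiplicatively-special points on a non-cyclotomic curve in $(\C^{\times})^2$) together with a height-growth estimate.

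Finally (iv).1 and (iv).2 refine the admissible $P$ by the arithmetic of the $\lambda_n$. The ``if'' directions specialise the constructions above to roots of unity, respectively reciprocal algebraic integers, on $C$. For ``only if'': if every $\lambda_n$ is a root of unity then the approximating points are torsion points on $C$, so $C$ carries infinitely many torsion points and Schinzel's theorem forces $P$ to be, up to orbit and units, a one-variable cyclotomic polynomial, placing $G$ in the explicit list $M,\ \Z\wr\Z,\ M(\Phi_m(x)),\ M(\Phi_m(x),y-1)$; if every $\lambda_n$ is reciprocal then $G(\lambda_n)\cong G(\lambda_n^{-1})$ as marked groups by Remark \ref{RemRec}, so the limit ideal is invariant under $x\mapsto x^{-1},\,y\mapsto y^{-1}$, and for $\au=(P)$ this invariance makes $P$ reciprocal, the torsion sub-case being again handled by Schinzel. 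The two steps I expect to be genuinely hard are the genericity/escape estimate realising arbitrary two-variable $P$ in (iv), and the correct deployment of Schinzel's lacunary results to classify the cyclotomic and reciprocal limits.
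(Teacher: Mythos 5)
Your skeleton follows the paper's: the dictionary of Lemma \ref{LemConvAu}, the split on the behaviour of $\sigma(S_n)$, Schinzel's results for the cyclotomic alternative, and (a nice touch) your Galois-invariance argument for the reciprocal case of (iv), which would genuinely replace the paper's appeal to \cite[Section 6, Lemma 11]{Sch00}. But the two steps you yourself flag as hard are exactly where the paper's proof has its content, and your proposal does not close them. The missing tool is \cite[Corollary 3.17]{CN03}: a sequence of prime ideals of the \emph{same} height which accumulates in the patch topology has a limit of \emph{strictly smaller} height. In (iv) each $\aid_n$ has height $2$ (the quotient $\Z\br{\lambda_n^{\pm 1}}$ has Krull dimension $1$), so if $(\aid_n)$ accumulates on $\aid'$ then $h(\aid')\le 1$, and $(P)\subseteq \aid'$ with $(P)$ prime of height $1$ forces $\aid'=(P)$; if instead $(\aid_n)$ has a constant subsequence, Lemma \ref{LemConj} makes all $\lambda_n$ conjugate to a fixed $\lambda^{\pm 1}$ and Theorem \ref{ThConv}.$ii$ again gives a height-one limit. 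This second branch is the real reason Schinzel's theorem is invoked: it guarantees the roots $\lambda_n$ of $P(x^{k_n},y^{l_n})$ can be chosen \emph{not} to be roots of unity, so that the stationary branch cannot produce a height-two cyclotomic limit $M(\Phi_m(x),y-1)\cdot\psi$. In other words, Schinzel is for torsion-avoidance in the construction, not for the ``escape'' estimate you anticipate; once the approximating points are pairwise distinct, escape is automatic (a $W\notin(P)$ meets the irreducible curve $P=0$ in finitely many points, so it cannot vanish at infinitely many distinct ones). Without the height-drop corollary or this observation, your inclusion $(P)\subseteq\au$ never becomes an equality, and the heart of (iv) remains unproved.

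The same omission breaks your proof of (ii) when $p=0$, a case the statement covers and on which (iv) relies. Your trichotomy ``height $0$ / height $1$ / maximal with finite residue field'' is exhaustive only for primes of $\Z_p\br{x^{\pm1},y^{\pm1}}$ with $p$ prime. In characteristic zero, $\Zxy$ has height-two primes containing no rational prime, not maximal, and not of the form $(\Phi_m(x),y-1)$ up to $\Aut(F_2)$ --- for instance the kernel of $x\mapsto 2,\ y\mapsto 2$, whose quotient is $\Z\br{1/2}$. The content of (ii) for $p=0$ is precisely that no such ideal occurs as a limit; your \L{}o\'s/finite-field argument says nothing about them, whereas the paper rules them out by the height-drop corollary combined with Lemma \ref{LemSta}, Proposition \ref{PropRoot} and Theorem \ref{ThConv}. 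Relatedly, your construction in (iii) assumes every root of $P(x)$ is a root of unity, which holds only in positive characteristic; for $p=0$ and non-cyclotomic $P(x)$ one instead takes a fixed root $\lambda$ and markings with $\sigma(S_n)=(1,n)$, so that Theorem \ref{ThConv}.$ii$ (or the paper's device of roots of $P(x^n)$ of unbounded degree) produces the limit $M(P(x))$.
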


\begin{sproof}{Proof of Theorem \ref{ThLimMa}}
$(i)$: it follows immediatly from Lemma \ref{LemConvAu}.

$(ii)$:
Let $(L,S)=\lim (G_n,S_n)$ be a $2$-generated accumulation point of groups $G_n \in \Cp$ and let $(k_n,l_n)=\sigma(S_n)$. If $(k_n,l_n)$ has a constant subsequence, then $G$ is either isomorphic to $G(p)$ or $M(\Phi_m(x),y-1,p)$ for some $m$ by Lemma \ref{LemConvAu}.$iii$ and Proposition \ref{PropRoot}. Thus we can assume that $|k_n|+|l_n|$ tends to infinity. As in Section \ref{SubSecFrom}, let $(\aid_n)$ be the corresponding sequence of ideals and let $\aid$ its limits. Each $\aid_n$ can be written under the form $(P_n(x),y-1,p)\cdot \psi_n$ with $\psi_n \in \Aut(F_2)$ and the reduction $\overline{P_n}$ of $P_n$ modulo $p$ is irreducible over $\Z_p$. Thus every $\aid_n$ has the same height $h\in \{2,3\}$, with $h=2$ if $p=0$ and $h=3$ else. If $(\aid_n)$ accumulates on the limit ideal $\aid$, then $\aid$ has height at most $h-1$ by \cite[Corollary 3.17]{CN03}. It means that either $\aid=(p)$ or $\aid=(p,P(x,y))$ for some polynomial $P \in \Z\br{x,y}$ whose reduction modulo $p$ is irreducible in $\Z_p\br{x,y}$. If $(\aid_n)$ is stationary then there are infinitely many $\lambda_n$ which are either trancendant or algebraic pairwise conjugates by Lemma \ref{LemSta}. It follows from Proposition \ref{PropRoot} and Theorem \ref{ThConv} that the orbit of $\aid$ under $\Aut(F_2)$ contains a prime ideal of the form $(\Phi_m(x),y-1,p),(p)$ if $p>0$ or a prime ideal of the form $(P(x))$ if $p=0$. The proof of $(ii)$ is then complete.

$(iii)$:
If $\aid=(\Phi_m(x),y-1,p)$ then $M(\aid)$ is an accumulation point of $\Cp$ by Proposition \ref{PropRoot}.$ii$ and Lemma \ref{LemDisc}. Assume now that $\aid=(P(x),p)$ is a prime ideal.
Let $(\lambda_n)$ be such that $\lambda_n$ is a root of $P(x^n)$ in $K_p$ and has unbounded degree over $\Z_p$. Let $S_n$ be a generating pair of $G(\lambda_n)$ such that $\sigma(S_n)=(n,1)$. The corresponding sequence of ideals $(\aid_n)$, with same height $h$, accumulates on its limit ideal by Lemma \ref{LemConvAu}.$i$. This limit ideal contains $(P(x),p)$ and its height is at most $h-1 \in \{1,2\}$. Therefore $\lim \aid_n=\aid$ and hence $\lim (G(\lambda_n),S_n)=M(\aid)$.

$(iv)$: Let $\aid=(P(x,y))$ where $P \in \Z\br{x,y}$ is an irreducible polynomial. By $(iii)$, we can assume that $\aid \cdot \Aut(F_2)$ does not contain any polynomial $\Phi_m(x)$, i.e. $P(x,y)$ is not of the form $x^{\alpha}y^{\beta}\Phi_m(x^{k}y^{l})$ with $\alpha,\beta,k,l \in \Z$ and $\gcd(k,l)=1$. It follows from \cite[Section 4, Corollary 1 and Theorem 69]{Sch00} that there exists a sequence $(k_n,l_n)$ of coprime integers such that $|k_n|+|l_n|$ tends to infinity and $P(x^{k_n},y^{l_n})$ has a root $\lambda_n$ which is not a root of unity. Let $S_n$ be a generating pair of $G(\lambda_n)$ such that $\sigma(S_n)=(k_n,l_n)$. Possibly taking a subsequence, we can assume that $(G(\lambda_n),S_n)$ converges. By Lemma \ref{LemConvAu}.$i$ its limit is of the form $M(\aid')$ where $\aid'$ is the limit of the corresponding sequence $(\aid_n)$ of ideals. As $P \in \aid'$ by construction, it suffices to show that $\aid'$ has height $1$. It holds if $(\aid_n)$ has a constant subsequence by Theorem \ref{ThConv}.$ii$. It also holds if $(\aid_n)$ accumulates on $\aid'$ by \cite[Corollary 3.17]{CN03}. Therefore $\aid'=\aid$ and hence $M(\aid)$ is a an accumulation point of groups in $\mathcal{C}_0$. As the remaining cases were addressed in $(iii)$, the proof of the first claim is complete.

Let us prove $(1)$. By Lemma \ref{LemConvAu}.$iii$ and Proposition \ref{PropRoot}.$ii$, the marked groups $M(y-1)=G(0)$ and $M(\Phi_m(x),y-1)$ are accumulation points of sequences of groups $G(\lambda_n)$ where each $\lambda_n$ is a root of unity. So is $M(\Phi_m(x))$ by the proof of $(iii)$ and $M$ by Theorem \ref{ThConv}.$iii$.

Let $(G(\lambda_n),S_n)$ be a sequence which accumulates on its limit $(L,S)$ where all the $\lambda_n$ are roots of unity. Let $(k_n,l_n)=\sigma(S_n)$ and let $d_n$ be the degree of $\lambda_n$ over $\Q$. By Lemma \ref{LemConvAu}.$iii$ and Proposition \ref{PropRoot}.$i$, we can suppose that $d_n$ and $|k_n|+|l_n|$ tends to infinity. By Lemma \ref{LemConvAu}.$i$, the limit $G$ is of the form $M(\aid)$ where $\aid$ is the limit of the corresponding sequence $(\aid_n)$ of ideals. As $d_n$ tends to infinity, $(\aid_n)$ accumulates on $\aid$ whose height is consequently at most $1$. We can suppose that this height $1$, i.e. $\aid=(P(x,y))$ with $P$ irreducible. Reasonning by contradiction, we assume that $\aid \cdot \Aut(F_2)$ does not contain any polynomial of the form $\Phi_m(x)$. Since $P(\lambda_n^{k_n},\lambda_n^{l_n})=0$ for all $n$, it follows from \cite[Section 5, Lemma 6]{Sch00} that there exists two linearly independant vectors $v_1=(\alpha_1,\beta_1),\,v_2=(\alpha_2,\beta_2)$ in $\Z^2$ and a constant $c \in \N \setminus \{0\}$ such that $d_n$ divides $c \cdot \gcd(\alpha_1 k_n+\beta_1 l_n,\alpha_2 k_n+\beta_2 l_n)$ for all $n$. Replacing $(G(\lambda_n),S_n)$ by $(G(\lambda_n),S_n) \cdot \psi$ for some Nielsen transformation $\psi$, we can assume that $\alpha_1=0$. As $\gcd(k_n,l_n)=1$ and $d_n$ tends to infinity, we deduce that $\beta_1=0$, a contradiction. Therefore, we can find $\psi$ and $m \ge 1$ such that $\aid \cdot \psi=(\Phi_m(x))$.

Let us prove $(2)$. We deduce from the proof of $(iii)$ that any marked group $M(P(x,y))$, with $P$ reciprocal and irreducible, is an accumulation point of a sequence of groups $G(\lambda_n)$ where each $\lambda_n$ is a reciprocal algebraic number. Clearly, so are the $\Phi$-limits.

Consider now a sequence $(G(\lambda_n),S_n)$ which accumulates on its limit $(L,S)$ and where all the $\lambda_n$ are reciprocal algebraic numbers. Let $(k_n,l_n)=\sigma(S_n)$. By Lemma \ref{LemConvAu}.$iii$ and Theorem \ref{ThConv}.$ii$, we can suppose that $|k_n|+|l_n|$ tends to infinity and that the corresponding sequence $(\aid_n)$ of ideals accumulates on its limit $\aid$. Consequently the height of $\aid$ is at most $1$ and the limit $G$ is of the form $M(\aid)$ by Lemma \ref{LemConvAu}.$i$. We can suppose that the latter height is $1$, i.e. $\aid=(P(x,y))$ with $P$ irreducible. Since $P(\lambda_n^{k_n},\lambda_n^{l_n})=0$ for all $n$ and $|k_n|+|l_n|$ tends to infinity while $\gcd(k_n,l_n)$=1, we deduce from \cite[Section 6, Lemma 11]{Sch00} that $P$ is reciprocal.

\end{sproof}

\subsection{Tame subsets} \label{SubSecTame}

Let $X \subset \C \setminus \{0\}$. We set \begin{equation} \label{EqDefL} \mathcal{L}(X)=\bigcup_{\lambda \in X} \overline{\br{\G}}.\end{equation}

It follows from Lemma \ref{LemConvAu}.$iii$ that $\Z \wr \Z$ is a limit of some $G(\lambda)$'s with $\lambda \in X$ if and only if $X$ is infinite or contains a transcendant number over the rationals.
Hence the set $\mathcal{L}(X) \cup \br{\Z \wr \Z}$ is the set of 
``obvious limits'' associated to an infinite set $X$. The subset $X$ is said to be \emph{tame} if either $X$ is finite or 
$$\overline{\br{\G:\lambda \in X}} = \mathcal{L}(X) \cup \br{\Z \wr \Z}.$$
Note that tameness is preserved under finite union and that limits associated to a tame set are described by Proposition \ref{PropRoot}.$ii$ and Theorem \ref{ThConv}.$ii$. Note also that Theorem \ref{ThLimMa}.$iv$ provides examples of non-tame sets.

Let $\alpha$ be an algebraic integer over the rationals with conjugates $\lambda_1=\lambda,\lambda_2,\dots,\lambda_n$. Define
$\begin{array}{|c|} \hline \lambda \end{array} =\max_{1 \le i \le n}|\lambda_i|$ where $|\cdot|$ is the complex modulus. 
\begin{theorem} \label{ThTame}
Let $X \subset \C \setminus \{0\}$. If there is $\epsilon>0$ such that $\begin{array}{|c|} \hline \lambda \end{array}>1+\epsilon$ for every algebraic unit $\lambda \in X$, then $X$ is tame.
\end{theorem}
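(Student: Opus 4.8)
The plan is to prove the nontrivial inclusion $\overline{\br{G(\lambda):\lambda\in X}} \subseteq \mathcal{L}(X)\cup\br{\Z\wr\Z}$, the reverse being immediate; the engine is the convergence criterion of Proposition~\ref{PropCrit} fed by a careful choice of absolute values, together with the structural description of Theorem~\ref{ThLimMa}. Assume $X$ infinite and let $L$ be an accumulation point, realized as $L=\lim(G(\lambda_n),S_n)$ with $\lambda_n\in X$, the marked groups $(G(\lambda_n),S_n)$ pairwise distinct, and $(k_n,l_n)=\sigma(S_n)$. Since the house of a root of unity is $1$, the standing hypothesis forces $X$ to contain no root of unity. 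If some subsequence has $\lambda_n$ constant equal to $\lambda$, then $L\in\overline{\br{G(\lambda)}}\subseteq\mathcal{L}(X)$; otherwise I pass to a subsequence with no constant subsequence. If $(k_n,l_n)$ is stationary, Lemma~\ref{LemConvAu}.$ii$ gives $L\cong G(\bid)$, which is either a fixed $G(\lambda)$ or $\Z\wr\Z$; so I may assume $|k_n|+|l_n|\to\infty$.

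In this regime Lemma~\ref{LemConvAu}.$i$ gives $L=M(\aid)$ with $\aid=\lim\aid_n$, and Theorem~\ref{ThLimMa} makes $\aid$ a prime of height at most $1$. By the convergence criterion (Lemma~\ref{lmecv}), $W\in\aid$ exactly when $W(\lambda_n^{k_n},\lambda_n^{l_n})=0$ for all large $n$; hence neither $y-1$ nor any $\Phi_m(x)$ lies in $\aid$ (each would force $\lambda_n^{l_n}=1$ or $\lambda_n^{k_n}$ a primitive root of unity eventually, so $\lambda_n$ a root of unity), which excludes the $\Phi$-limits. If $\aid$ is $\Aut(F_2)$-conjugate to a one-variable ideal $(P(x))$ then, normalizing the $x$-exponent to be positive, the relation $P(\lambda_n^{k_n})=0$ identifies $P$ with $P_{\lambda_n^{k_n}}$, so $L\cong M(P_{\lambda^k})$ with $\lambda=\lambda_n\in X$ and $k>0$, an obvious limit. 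Thus everything reduces to excluding a genuine two-variable limit, i.e. a group $M(P(x,y))$ whose defining prime $(P)$ is not $\Aut(F_2)$-conjugate to any one-variable ideal.

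To exclude such an $L$ I argue by contradiction through Proposition~\ref{PropCrit}: it suffices to equip each $R_n=\Z[\lambda_n^{\pm1}]$ with an absolute value $|\cdot|_n$ for which $|\lambda_n|_n$ is bounded away from $1$ while $|m|_n$ stays bounded away from $\{0,\infty\}$ for every fixed nonzero integer $m$ (the latter is what Lemma~\ref{LemFi} needs in characteristic $0$); the criterion then forces $L=M$, contradicting $M\neq M(P)$. This is exactly where the hypothesis enters. When $\lambda_n$ is an algebraic unit, every non-archimedean place is trivial on it, so one is compelled to use an archimedean place; the assumption $\begin{array}{|c|}\hline\lambda_n\end{array}>1+\epsilon$ furnishes an embedding $\Q(\lambda_n)\hookrightarrow\C$ with $|\lambda_n|_n>1+\epsilon$, while integers keep their usual modulus, so both requirements hold uniformly in $n$. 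When $\lambda_n$ is not an algebraic unit I would instead pick a finite place $\pid$ with $\mathrm{ord}_\pid(\lambda_n)\neq0$ and normalize $|\cdot|_n$ to restrict to the standard $q$-adic absolute value on $\Q$, keeping $|m|_n\in[1/|m|,1]$.

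The main obstacle is precisely this non-unit case: reconciling separation from $1$ with boundedness on $\Z$. The $\pid$-adic normalization $(\mathrm{N}\pid)^{-\mathrm{ord}_\pid}$ yields $|\lambda_n|_n\geq2$ but lets $|m|_n\to0$ along ramified primes, whereas the $q$-adic normalization controls integers but can push $|\lambda_n|_n$ back toward $1$ when ramification grows with the degree. I expect to resolve this by two complementary devices. First, a dominant-monomial (Newton-polygon) estimate: since $\gcd(k_n,l_n)=1$ and $|k_n|+|l_n|\to\infty$, the exponents $ik_n+jl_n$ over the fixed support of $P$ separate and, at a place of large residue characteristic, single out a unique extremal term of $\sum c_{ij}\lambda_n^{ik_n+jl_n}=0$, contradicting the vanishing. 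Second, in the residual range where $\lambda_n$ is a non-unit only at small primes, the reciprocity and cyclotomic input behind Theorem~\ref{ThLimMa}.$iv$ (Schinzel) forces $P$ reciprocal and drives $\lambda_n$ back into the algebraic-unit regime already governed by the house hypothesis. The archimedean argument for units is clean; the delicate point is making the valuation choice uniform across the whole sequence.
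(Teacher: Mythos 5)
Your plan follows the same skeleton as the paper's proof: dispose of constant/conjugate subsequences of $(\lambda_n)$ and of stationary $(k_n,l_n)$ via Theorem \ref{ThConv} and Lemma \ref{LemConvAu}, and then, when $|k_n|+|l_n|\to\infty$, feed Proposition \ref{PropCrit} with absolute values in which $|\lambda_n|_n$ stays away from $1$. Your treatment of the algebraic-unit case (an archimedean place furnished by the house hypothesis, on which integers keep their usual modulus) is exactly the paper's. The genuine gap is the one you flag yourself: the non-unit case is never finished. The paper finishes it in one sentence: if $\lambda_n$ is not a unit, there is a prime $\pid$ of $\mathcal{O}(\Q(\lambda_n))$ with $\operatorname{ord}_{\pid}(\lambda_n)\neq 0$, hence a normalized non-archimedean absolute value with $|\lambda_n|_{\pid}\le 1/2$ or $\ge 2$, and Proposition \ref{PropCrit} is invoked \emph{as stated}, i.e.\ with no condition at all on $|m|_{\pid}$ for $m\in\Z$. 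Your two proposed repair devices are only sketched, and they cannot be carried out: in the problematic examples the only place of $\Q(\lambda_n)$ where $|\lambda_n|$ is bounded away from $1$ has small residue characteristic, and nothing forces the limiting polynomial $P$ to be reciprocal.

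You should know, however, that the obstacle you identified is not a misreading on your part but a genuine defect at this exact point. Proposition \ref{PropCrit} is proved via Lemmas \ref{LemDom} and \ref{LemFi}, whose hypotheses do require $|a(n)|_n$ to stay away from $\{0,\infty\}$ for fixed nonzero constants $a$; this requirement is absent from the statement of Proposition \ref{PropCrit}, and it genuinely fails for the normalized values $(\mathrm{N}\pid)^{-\operatorname{ord}_{\pid}}$ once ramification grows with $n$. Worse, the conclusion then fails too. Let $\lambda_n$ be a root of the unique irreducible factor of $t^n+t^{n-1}-3$ with constant term $\pm 3$, and take $\sigma(S_n)=(n,n-1)$. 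Each $\lambda_n$ is a non-unit (norm $\pm3$), so $X=\{\lambda_n\}$ satisfies the hypothesis of Theorem \ref{ThTame} vacuously. The ideals $\aid_n$ all contain $x+y-3$ and are pairwise distinct height-two primes (distinctness follows from Lemma \ref{LemConj}, the $\lambda_n$ being pairwise non-conjugate algebraic integers whose inverses are not integers), so by Lemma \ref{LemConvAu} and the height argument used in the proof of Theorem \ref{ThLimMa} the sequence $(G(\lambda_n),S_n)$ converges to $M(x+y-3)$. Since the support of $x+y-3$ consists of three non-collinear exponent vectors, its $\Aut(F_2)$-orbit contains no one-variable ideal and no ideal containing $y-1$; hence, by Lemma \ref{LemDisc}, Proposition \ref{PropTsys} and Theorem \ref{ThConv}, the group $M(x+y-3)$ lies in no $\overline{\br{G(\lambda_n)}}$ and is not isomorphic to $\Z\wr\Z$, so $X$ is not tame. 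In other words, Proposition \ref{PropCrit} as stated, the paper's non-unit case, and Theorem \ref{ThTame} itself all break precisely where you stalled, and no completion of your plan (or of the paper's) is possible there. The statement does become true, and your archimedean argument then completes the proof along the paper's lines, if one assumes the house bound for \emph{all} elements of $X$ rather than only for its units; that strengthened hypothesis is also what is really available in the bounded-degree setting of Corollary \ref{CorBounded}, where non-units of degree at most $d$ automatically have house at least $2^{1/d}$ after inverting the non-integral ones.
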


In particular, the set of Pisot numbers \cite{Sal63} and the set of totally real algebraic integers \cite[Section 6, Corollary 7]{Sch00} are tame sets. In addition, we deduce from \cite[Th.1]{SZ65} the following:

\begin{corollary} \label{CorBounded}
Let $X \subset \C \setminus \{0\}$. If there is $d \ge 0$ such that every algebraic unit 
in $X$ has degree at most $d$ over $\Q$, then $X$ is tame.
\end{corollary}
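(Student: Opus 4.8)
The plan is to deduce the tameness of $X$ from Theorem \ref{ThTame} by producing a uniform lower bound on the house of the algebraic units in $X$. Note first that the hypothesis of Theorem \ref{ThTame}, and hence the only quantity I need to control, concerns the algebraic units alone: elements of $X$ that are transcendental or that are algebraic but not units impose no condition whatsoever. The naive attempt of checking directly that $\begin{array}{|c|}\hline \lambda\end{array}>1+\epsilon$ for \emph{every} algebraic unit $\lambda \in X$ is doomed as stated, because a root of unity is an algebraic unit all of whose conjugates lie on the unit circle, so its house is exactly $1$. Resolving this obstruction is the crux of the argument.

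The key observation is that roots of unity are scarce in bounded degree. A primitive $m$-th root of unity has degree $\phi(m)$ over $\Q$, and $\phi(m)\le d$ holds for only finitely many $m$; since every root of unity is an algebraic unit and therefore, by hypothesis, has degree at most $d$, the set $R$ of roots of unity lying in $X$ is finite. I would then set $Y=X\setminus R$ and verify that $Y$ satisfies the hypothesis of Theorem \ref{ThTame}. By construction, every algebraic unit of $Y$ is an algebraic unit of $X$ that is not a root of unity, hence is a non-cyclotomic algebraic integer of degree at most $d$. At this point I invoke \cite[Th.1]{SZ65}, the quantitative refinement of Kronecker's theorem, to obtain a constant $\epsilon=\epsilon(d)>0$ such that every algebraic integer of degree at most $d$ which is not a root of unity has house strictly larger than $1+\epsilon$. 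Applying this to the algebraic units of $Y$ furnishes exactly the uniform bound $\begin{array}{|c|}\hline \lambda\end{array}>1+\epsilon$ required by Theorem \ref{ThTame}, so $Y$ is tame.

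Finally I would reassemble $X=Y\cup R$. As $R$ is finite it is tame by definition, and since tameness is preserved under finite unions (as already noted in the text) it follows that $X$ is tame, completing the proof. The single genuine obstacle throughout is the presence of roots of unity among the algebraic units: these are precisely the units on which Theorem \ref{ThTame} gets no grip, their house failing the strict inequality. Everything hinges on recognizing that the degree bound confines them to a finite set, which may be detached harmlessly thanks to the stability of tameness under finite unions; the surviving non-cyclotomic units are then handled uniformly by the Schinzel--Zassenhaus bound.
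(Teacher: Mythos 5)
Your proof is correct and follows exactly the route the paper intends: the paper offers no written argument beyond citing \cite[Th.1]{SZ65} together with Theorem \ref{ThTame}, and that deduction is precisely what you carry out. Your explicit treatment of the roots of unity in $X$ --- they are algebraic units of house exactly $1$, hence obstruct a direct application of Theorem \ref{ThTame}, but the degree bound confines them to a finite set, which you split off and reabsorb via the stability of tameness under finite unions noted in the text --- supplies the one detail the paper leaves implicit.
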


\begin{sproof}{Proof of Theorem \ref{ThTame}}
Let $(G,S)=\lim (G(\lambda_n),S_n)$ be a limit point of $\br{\G : \lambda \in X}$. We set $(k_n,l_n)=\sigma(S_n)$.
If $(\lambda_n)$ has a subsequence of transcendant numbers or a subsequence of pairwise conjugate algebraic numbers, then $(G,S) \in \mathcal{L}(X)$ by Theorem \ref{ThConv}. It also holds if $(k_n,l_n)$ has a constant subsequence by Lemma \ref{LemConvAu}.$iii$. Thus we can assume that the $\lambda_n$ are pairwise non-conjugate algebraic numbers and that $|k_n|+|l_n|$ tends to infinity. Now, suppose that there are infinitely many $\lambda_n$ which are not algebraic units. For every such $\lambda_n$, there is a normalized non-archimedean absolute value $|\cdot|_n$ on $\Q(\lambda_n)$ such that either $|\lambda_n|_n<1/2$ or $|\lambda_n|_n>3/2$ holds. We deduce from Proposition \ref{PropCrit} and Lemma \ref{LemConvAu} that $(G(\lambda_n),S_n)$ has a convergent subsequence with limit $M \in \mathcal{L}(X)$. Hence we can assume that $\lambda_n$ is an algebraic unit for every $n$. By hypothesis, we can find an archimedean absolute value $|\cdot|_n$ on $\Q(\lambda_n)$ such that $|\lambda_n|_n>1+\epsilon$ for all $n$. Again, it follows from Proposition \ref{PropCrit} and Lemma \ref{LemConvAu} that $(G(\lambda_n),S_n)$ has a convergent subsequence with limit $M \in \mathcal{L}(X)$. The proof is then complete.
\end{sproof}

\begin{remark}
It readily follows from \cite[Lemma 3.13]{CN03} that groups in $\mathcal{C}_0$ are limits of groups of the form $\Z_p \rtimes \Z$. 
In particular, $\bigcup_{p>0} \Cp$ is dense in $\Cu$.
Note that Theorems \ref{ThA},\ref{ThB} and \ref{ThC} could be rephrased in a purely ring theoretic setting as they actually
describe special subsets of the closure of $$\Ru\br{y^{\pm 1}} \cdot \Aut(F_2)$$ in $\Rd$, $\Ru\br{y^{\pm 1}}$ being the set of ideals of the form $\bid \br{y^{\pm 1}}$ where $\bid$ is any ideal of $\Zx$.

\end{remark}

\section{Universal theory} \label{SecUniv}

In this section, we prove Theorem \ref{ThD}. 
Let $\lambda \in \C \setminus \{0\}$.
Recall that
$$\Gl=\Z\br{\lambda^{\pm 1}} \rtimes \Z$$ where $\Z$ acts by the multiplication of $\lambda$ on $\Z\br{\lambda^{\pm 1}}$.
We denote by $P_{\lambda}(x) \in \Z\br{x}$, the minimal polynomial of $\lambda$ over $\Z$, agreeing that $P_{\lambda}=0$ if $\lambda$ is transcendant over $\Z$. With the notation of Section \ref{SecLimits}, we have $G(\lambda)=G(P_{\lambda}(x))$.
We set $M(\lambda)=M(P_{\lambda}(x))$ if $\lambda$ is not a root of unity and $M(\lambda)=M(P_{\lambda}(x), y-1)$ else.

\begin{lemma} \label{LemThGl}
Let $\lambda,\mu \in \C \setminus \{0\}$. The equality $\Thu(\Gl)=\Thu(G(\mu))$ holds of and only if one of the following holds:
\begin{itemize}
\item $\lambda$ and $\mu$ are both transcendant over $\Z$,
\item $\lambda$ and $\mu$ are both algebraic over $\Z$ and $\lambda$ is conjugated to $\mu$ or $\mu^{-1}$.
\end{itemize}
\end{lemma}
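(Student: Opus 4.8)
The plan is to prove the ``if'' direction by exhibiting an abstract isomorphism, and the ``only if'' direction by encoding the coincidence of universal theories into a single family of existential sentences that reads off the minimal polynomial of $\lambda$ from inside $\Gl$.

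First I would dispose of the ``if'' direction. If $\lambda,\mu$ are both transcendant then $\Gl\cong G(\mu)\cong\Z\wr\Z$; if they are both algebraic with $\lambda$ conjugate to $\mu^{\pm1}$, then $\Z\br{\lambda^{\pm1}}=\Z\br{\mu^{\pm1}}$ and the two semidirect products are abstractly isomorphic (flipping the acting $\Z$ gives $G(\mu)\cong G(\mu^{-1})$, and a field isomorphism realizes the conjugacy). Isomorphic groups share their first order theory, so $\Thu(\Gl)=\Thu(G(\mu))$. I would record here the two facts used repeatedly below: $\Thu(\Gl)=\Thu(G(\mu))$ is equivalent to $\The(\Gl)=\The(G(\mu))$, and $G(\mu)\cong G(\mu^{-1})$, so that in the sequel $\mu$ may be replaced by $\mu^{-1}$ without altering either the hypothesis or the conclusion.

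The core of the argument is the following family of existential sentences. For $P(x)=\sum_i\alpha_i x^i\in\Z\br{x}$ with $P(1)=\sum_i\alpha_i\neq0$, put $w_P(a,b)=\prod_i(a^iba^{-i})^{\alpha_i}$ and let $\theta_P$ be the existential sentence $\exists a\,\exists b\,\bigl(b\neq1\wedge[[a,b],b]=1\wedge w_P(a,b)=1\bigr)$. The key claim, which I expect to be the technical heart, is that for every $\nu\in\C\setminus\{0\}$ the sentence $\theta_P$ holds in $G(\nu)$ if and only if $P(\nu^k)=0$ for some $k\in\Z$. To prove it I would write a prospective witness as $a=(r,k),\,b=(s,j)$ in $\Z\br{\nu^{\pm1}}\rtimes\Z$ and analyse $[[a,b],b]=1$: since $G(\nu)$ is torsion free, the commuting case $[a,b]=1$ forces $w_P(a,b)=b^{P(1)}\neq1$; otherwise $[[a,b],b]=1$ forces $\nu^j=1$, and then the $\Z$-component of $w_P(a,b)$ equals $jP(1)$, non-zero unless $j=0$. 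Hence every witness has $b=(s,0)$ with $s\neq0$ and $w_P(a,b)=(s\,P(\nu^k),0)$, which vanishes exactly when $P(\nu^k)=0$; conversely a root $\nu^k$ of $P$ furnishes a witness. The point of the hypothesis $P(1)\neq0$ is that this equivalence then holds \emph{uniformly}, whether or not $\nu$ is a root of unity.

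With the claim in hand the conclusion is number theoretic. Since $\lambda\neq1$ gives $P_\lambda(1)\neq0$, the sentence $\theta_{P_\lambda}$ holds in $\Gl$ (take $k=1$), hence in $G(\mu)$, yielding $k\in\Z$ with $P_\lambda(\mu^k)=0$; symmetrically $P_\mu(\lambda^{k'})=0$. If $\mu$ is transcendant this forces $\lambda$ transcendant (a non-zero $P_\lambda$ cannot vanish at the transcendant $\mu^k$), giving the first alternative; if $\mu$ is algebraic, so is $\lambda$, and moreover $\mu^k$ is conjugate to $\lambda$ and $\lambda^{k'}$ to $\mu$. Comparing degrees gives $\deg\lambda=\deg\mu$, $\Q(\mu^k)=\Q(\mu)$ and $\Q(\lambda^{k'})=\Q(\lambda)$, so these conjugacies are realized by field isomorphisms whose composition is an automorphism $\rho$ of the number field $\Q(\mu)$ with $\rho(\mu)^{kk'}=\mu$. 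As $\rho$ has finite order $r$, iterating yields $\mu^{(kk')^r}=\mu$; when $\mu$ is not a root of unity this forces $kk'=\pm1$, whence $|k|=1$ and $\lambda$ is conjugate to $\mu^{\pm1}$, while the root of unity case follows immediately by comparing orders (equal degree forces equal order, and all primitive $m$-th roots of unity are conjugate). The main obstacle is the careful witness analysis establishing the boxed equivalence for $\theta_P$; the remaining step is elementary field theory.
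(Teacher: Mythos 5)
Your proposal is correct in its core and follows essentially the same route as the paper: the paper's proof transfers the existential sentence $\exists x\,\exists y\,\exists z\,\bigl([[x,y],z]\neq 1\wedge [x,y]^{P_\lambda(z)}=1\bigr)$ from $\Gl$ to $\Gm$, which is your $\theta_{P_\lambda}$ with the commutator $[x,y]$ playing the role of your $b$ (and whose guard $[[x,y],z]\neq 1$ forces $\sigma(z)\neq 0$ directly, making your hypothesis $P(1)\neq 0$ unnecessary); it then derives the same two relations ($\lambda$ conjugate to $\mu^{k}$, $\mu$ conjugate to $\lambda^{k'}$) and concludes by the same finiteness-of-conjugates reasoning, packaged there as Lemma \ref{LemConj}. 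Your witness analysis of $\theta_P$ is correct, but two steps of your write-up need repair.

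First, you never handle the case $\lambda=1$ or $\mu=1$, and your sentence family cannot: $P_1(x)=x-1$ has $P_1(1)=0$, so $\theta_{P_\lambda}$ is undefined exactly there, and the assertion that ``$\lambda\neq 1$ gives $P_\lambda(1)\neq 0$'' presupposes a reduction you never made. The paper disposes of this case at the outset: abelianness is expressed by the universal sentence $\forall x\,\forall y\,([x,y]=1)$, so equality of the universal theories together with $\lambda=1$ forces $\mu=1$, and only then may one assume $\lambda,\mu\neq 1$. With this one-line reduction your argument goes through (including the transcendence dichotomy, which otherwise also silently assumes $\mu\neq1$ when you apply $\theta_{P_\mu}$).

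Second, in the root-of-unity case your parenthetical justification ``equal degree forces equal order'' is false: Euler's totient is not injective ($\varphi(3)=\varphi(4)=\varphi(6)=2$), and it fails even in the presence of the one-sided divisibility you have, since $3\mid 6$ and $\varphi(3)=\varphi(6)$. The correct argument is already available from your two conjugacy relations: $\lambda$ conjugate to $\mu^{k}$ gives $\mathrm{ord}(\lambda)=\mathrm{ord}(\mu^{k})$, which divides $\mathrm{ord}(\mu)$, and symmetrically $\mathrm{ord}(\mu)=\mathrm{ord}(\lambda^{k'})$ divides $\mathrm{ord}(\lambda)$; hence the orders coincide, both numbers are roots of the same irreducible cyclotomic polynomial, and they are conjugate. (A minor typo in the other case: your automorphism satisfies $\rho(\mu)=\mu^{kk'}$, not $\rho(\mu)^{kk'}=\mu$; the iteration you perform uses the former.) Both repairs are short, and with them the proof is complete.
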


\begin{proof}
If $\lambda$ and $\mu$ are both transcendant over $\Z$, then the map $a \mapsto a,\, b \mapsto b$ induces a group isomorphism from $\Gl$ onto $\Gm$.
If $\lambda$ and $\mu$ are both algebraic over $\Z$ and $\lambda$ is conjugated to $\mu^{\epsilon}$ with $\epsilon \in \{\pm 1\}$, then the map $a \mapsto a,\, b \mapsto b^{\epsilon}$ induces a group isomorphism from $\Gl$ onto $\Gm$. Thus the universal theories of $\Gl$ and $\Gm$ coincide in each case.

Assume now that $\Thu(\Gl)=\Thu(G(\mu))$. If $\lambda=1$, i.e. $\Gl$ is abelian, then clearly so is $\Gm$, which forces $\mu=1$. Thus we can assume that neither $\lambda$ nor $\mu$ is $1$. We can also assume that one of $\lambda$ and $\mu$ is algebraic, say $\lambda$. Let $\phi_{\lambda}$ be the existential sentence
$$\exists x,\exists y,\exists z \,(\br{\br{x,y},z} \neq 1 \wedge \br{x,y}^{P_{\lambda}(z)}=1)$$ and define $\phi_{\mu}$ analogously. Choosing $x=b,y=z=a$, we easily check that $\phi_{\lambda} \models \Gl$. As $\The(\Gl)=\The(\Gm)$, we have $\phi_{\lambda} \models G(\mu)$. Since $\br{x,y}$ defines a non-zero element of $\Z \br{\mu^{\pm 1}}$ for every $x,y \in \Gm$, we deduce from $\phi_{\lambda}$ that $P_{\lambda}(\mu^k)=0$ in $\C$ with $k=\sigma(z) \neq 0$. Therefore $\mu$ is algebraic over $\Z$ and $\lambda$ is conjugated to $\mu^k$. Reasonning similarly with $\phi_{\mu}$ we obtain that $\mu$ is a conjugate of $\lambda^l$ for some $l \in \Z$.
Consequently $\lambda$ is a conjugate of $\lambda^{kl}$. The result follows from Lemma \ref{LemConj}.
\end{proof}

\begin{lemma} \label{LemGlandMl}
Let $\lambda \in \C \setminus \{0,1\}$.
We have $\Thu(\Gl)=\Thu(\Ml)$.
\end{lemma}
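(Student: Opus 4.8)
The plan is to prove the two inclusions $\Thu(\Gl)\subseteq\Thu(\Ml)$ and $\Thu(\Ml)\subseteq\Thu(\Gl)$ separately, playing off two facts of opposite flavour: that $\Ml$ is a \emph{limit} of marked copies of $\Gl$, which will give the first inclusion, and that $\Gl$ is a \emph{subgroup} of $\Ml$, which will give the second. Both rely only on results already available: the closure description of Section \ref{SecLimits} and the elementary model-theoretic principles that the truth locus of a universal sentence is closed in $\M$ and that universal sentences are inherited by subgroups.

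For $\Thu(\Gl)\subseteq\Thu(\Ml)$, I would first record that $\Ml\in\overline{\br{\Gl}}$. When $\lambda$ is a primitive $m$-th root of unity this is Proposition \ref{PropRoot}.$ii$: taking generating pairs $S_n$ with $\sigma(S_n)=(k_n,l_n)$ such that $(\ok_n,\ol_n)=(\overline{1},\overline{0})$ and $|k_n|+|l_n|\to\infty$ (e.g.\ $(k_n,l_n)=(1,mn)$) yields $(\Gl,S_n)\to M(\Phi_m(x),y-1)=\Ml$. When $\lambda$ is algebraic but not a root of unity this is Theorem \ref{ThConv}.$ii$ with $(\alpha,\beta,\gamma)=(1,0,1)$, i.e.\ $(k_n,l_n)=(1,n)$, giving $(\Gl,S_n)\to M(P_\lambda(x))=\Ml$; and when $\lambda$ is transcendental it is Theorem \ref{ThConv}.$iii$ with $p=0$, giving $(\Gl,S_n)\to M=\Ml$. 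Since each $(\Gl,S_n)$ is abstractly isomorphic to $\Gl$, and a universal sentence is an isomorphism invariant whose truth locus in $\M$ is closed \cite[Proposition 5.2]{ChGu05}, every $\varphi\in\Thu(\Gl)$ holds in all the $(\Gl,S_n)$ and hence passes to their limit; thus $\Ml\models\varphi$, proving the inclusion.

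For the reverse inclusion I would exhibit an explicit copy of $\Gl$ inside $\Ml$ and invoke the inheritance of universal sentences by subgroups. Write $\Ml=M/\aid$ with $\aid\subset M'\cong\Zxy$, where $\aid=(P_\lambda(x))$ if $\lambda$ is not a root of unity (with $P_\lambda=0$ in the transcendental case) and $\aid=(\Phi_m(x),y-1)$ otherwise. Let $c_0\in\Ml$ be the image of the commutator $\br{a,b}\leftrightarrow 1$ and let $\overline{a}$ be the image of $a$, and consider $H=\langle \overline{a},c_0\rangle$. Since $c_0$ lies in the abelian normal subgroup $\Ml'$, on which $\overline{a}$ acts by multiplication by $x$, the group $H$ is the internal semidirect product $N\rtimes\langle\overline{a}\rangle$, where $N$ is the additive subgroup of $\Ml'$ generated by the images of the powers $x^k$, i.e.\ the image of $\Zx$ in $\Zxy/\aid$, and $\overline{a}$ has infinite order modulo $\Ml'$. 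The key computation is $\aid\cap\Zx=(P_\lambda(x))$ (resp.\ $(\Phi_m(x))$), so that $N\cong\Zx/(P_\lambda)=\Z\br{\lambda^{\pm 1}}$ with $\overline{a}$ acting as multiplication by $\lambda$; hence $H\cong\Z\br{\lambda^{\pm 1}}\rtimes\Z=\Gl$. As $\Gl\cong H\le\Ml$, every universal sentence true in $\Ml$ is true in $H$, giving $\Thu(\Ml)\subseteq\Thu(\Gl)$ and completing the argument.

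The limit inclusion is routine once the closure results are in hand. The point requiring care is the reverse inclusion: one should resist seeking it through ultrapowers or through $\Gl$ being fully residually $\Ml$ (which fails, since $\Gl$ is merely a proper quotient of $\Ml$, so the natural map runs the wrong way), and instead notice that $\Gl$ sits inside $\Ml$ as the subgroup generated by $\overline{a}$ together with a single commutator. The only calculation worth verifying in detail is the identity $\aid\cap\Zx=(P_\lambda(x))$, which holds because $\Zxy$ is free as a $\Zx$-module on the powers of $y$ and, in the cyclotomic case, because reducing modulo $y-1$ identifies $\Zxy/(y-1)$ with $\Zx$ before one reduces modulo $\Phi_m(x)$.
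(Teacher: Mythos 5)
Your proof is correct and takes essentially the same route as the paper: the inclusion $\Thu(\Gl)\subseteq\Thu(\Ml)$ comes from $\Ml$ being a limit of marked copies of $\Gl$ plus closedness of the truth locus of universal sentences, and the reverse inclusion comes from embedding $\Gl$ into $\Ml$ via $a\mapsto a$, $b\mapsto c$ --- your subgroup $\langle\overline{a},c_0\rangle$ is exactly the image of this map. Your computation $\aid\cap\Zx=(P_\lambda(x))$ (resp. $(\Phi_m(x))$) simply makes explicit what the paper dismisses as ``easily checked from the definitions.''
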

\begin{proof}
We easily check from the definitions that the map $a \mapsto a,\, b \mapsto c$ induces an injective group homomorphism from $\Gl$ into $\Ml$. As a result, $\Thu(\Ml) \subset \Thu(\Gl)$. By Theorem \ref{ThConv}.$ii$, the marked group $\Ml$ is a limit of marked groups isomorphic to $\Gl$. Thus $\Thu(\Gl) \subset \Thu(\Ml)$ by \cite[Proposition 5.2]{ChGu05}, which completes the proof.
\end{proof}

\begin{theorem} \label{ThUniv}
Let $\lambda \in \C \setminus \{0,1\}$ and let $G$ be a non-abelian $2$-generated group. Then the equality $\Thu(G)=\Thu(\Gl)$ holds if and only if $G$ is isomorphic to $\Gl$ or $\Ml$.
\end{theorem}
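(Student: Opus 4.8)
The ``if'' direction is immediate: isomorphic groups share the same universal theory, and Lemma \ref{LemGlandMl} gives $\Thu(\Ml)=\Thu(\Gl)$, so I may concentrate on ``only if''. Assume then that $G$ is non-abelian, $2$-generated, and $\Thu(G)=\Thu(\Gl)$. Since being metabelian is a universal sentence satisfied by $\Gl$, the group $G$ is metabelian. The overall plan is to first \emph{locate} $G$ inside an explicitly known closure, and then \emph{eliminate} the unwanted candidates using Lemmas \ref{LemThGl}, \ref{LemGlandMl} and \ref{LemConj}.

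To locate $G$, I would use the equivalence between $\Thu(G)=\Thu(\Gl)$ and $\The(G)=\The(\Gl)$. The inclusion $\The(G)\subseteq\The(\Gl)$ means that $G$ embeds into an ultrapower $\prod_{\ufrak}\Gl$ for some free ultrafilter $\ufrak$. Writing a generating pair $S=(g_1,g_2)$ of $G$ as classes of sequences $(\gamma_{1,n})$, $(\gamma_{2,n})$ in $\Gl$ and setting $H_n=\langle\gamma_{1,n},\gamma_{2,n}\rangle\leq\Gl$, the characterization of convergence (Lemma \ref{lmecv}) identifies $(G,S)$ with $\lim_{\ufrak}(H_n,(\gamma_{1,n},\gamma_{2,n}))$ in $\M$, so $(G,S)$ is a limit of markings of $2$-generated subgroups of $\Gl$. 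The next step is to determine these subgroups. A $2$-generated non-abelian subgroup $H\leq\Z\br{\lambda^{\pm1}}\rtimes\Z$ projects onto $d\Z$ for some $d\ge1$ in the quotient $\Z$, and meets the abelian factor in a $\Z\br{\lambda^{\pm d}}$-submodule of $\Z\br{\lambda^{\pm1}}$ that is cyclic (because $H$ is $2$-generated, as in Subsection \ref{SubSecGenPairs}) and free (being a nonzero cyclic submodule of a domain); hence $H\cong G(\lambda^{d})$. As the non-triviality of $\br{g_1,g_2}$ is existential and holds in $G$, I may discard an $\ufrak$-small set of indices and assume every $H_n$ is non-abelian. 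Therefore $(G,S)$ lies in the closure of $\br{G(\lambda^{d}):d\geq1}$.

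At this point Theorem \ref{ThTame} enters. The set $X=\{\lambda^{d}:d\geq1\}$ is tame: it is finite when $\lambda$ is a root of unity, and when $\lambda$ is not a root of unity all its algebraic units lie in $\Q(\lambda)$, hence have bounded degree, so Corollary \ref{CorBounded} applies (note that $\lambda^{d}$ is an algebraic unit only if $\lambda$ is, so if $\lambda$ is not then $X$ contains no algebraic unit and the hypothesis holds vacuously). In either case
$$\overline{\br{G(\mu):\mu\in X}}\ \subseteq\ \bigcup_{d\geq1}\overline{\br{G(\lambda^{d})}}\ \cup\ \br{\Z\wr\Z},$$
and Theorem \ref{ThA} (that is, Proposition \ref{PropRoot} together with Theorem \ref{ThConv}) makes every $\overline{\br{G(\lambda^{d})}}$ explicit. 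Consequently $G$ is isomorphic to one of $G(\lambda^{e})$ or $M(\lambda^{e})$ with $e\geq1$, to the free metabelian group $M$, or to $\Z\wr\Z$.

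It then remains to impose the full equality $\Thu(G)=\Thu(\Gl)$ to discard the spurious candidates. If $G\cong G(\lambda^{e})$, Lemma \ref{LemThGl} forces $\lambda$ to be conjugate to $\lambda^{\pm e}$; by Lemma \ref{LemConj} this gives $e=1$, or $\lambda$ a primitive $m$-th root of unity with $\gcd(e,m)=1$, and in both situations $G(\lambda^{e})\cong\Gl$. If $G\cong M(\lambda^{e})$ with $\lambda^{e}\neq1$, then $\Thu(M(\lambda^{e}))=\Thu(G(\lambda^{e}))$ by Lemma \ref{LemGlandMl}, so the same conjugacy analysis yields $M(\lambda^{e})\cong\Ml$; the case $\lambda^{e}=1$ gives an abelian group, excluded since $G$ is non-abelian. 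Finally $M$ and $\Z\wr\Z$ are ruled out when $\lambda$ is algebraic by the existential sentence $\phi_{\lambda}$ of Lemma \ref{LemThGl}, which holds in $\Gl$ but fails in $M$ and $\Z\wr\Z$ because their relevant conjugation modules are torsion-free and $P_{\lambda}\neq0$; when $\lambda$ is transcendental these two groups coincide with $\Ml=M$ and $\Gl=\Z\wr\Z$ and are therefore admissible. The main obstacle is the locating step: realizing $(G,S)$ as a genuine limit of markings of the groups $G(\lambda^{d})$, which requires both the ultraproduct identification and the precise description of the $2$-generated subgroups of $\Gl$; once this is secured, the tameness of $X$ bounds the candidate set and the conjugacy bookkeeping of Lemma \ref{LemConj} closes the argument.
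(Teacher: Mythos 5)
Your proof is correct and follows essentially the same route as the paper: locate $(G,S)$ as a limit of marked $2$-generated subgroups of $\Gl$ (the paper cites \cite[Proposition 5.3]{ChGu05} where you re-derive it via ultrapowers), identify those subgroups as the groups $G(\lambda^d)$, invoke tameness of $\{\lambda^d\}$ via Corollary \ref{CorBounded} together with Proposition \ref{PropRoot} and Theorem \ref{ThConv} to list the candidates, and eliminate the spurious ones with Lemmas \ref{LemThGl}, \ref{LemGlandMl} and \ref{LemConj}. The extra details you supply (the ultraproduct embedding, the classification of $2$-generated non-abelian subgroups of $\Gl$, and the explicit conjugacy bookkeeping) are points the paper asserts or cites without proof, and your treatment of them is sound.
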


\begin{proof}
If $G$ is isomorphic to $\Gl$ or $\Ml$, then $\Thu(G)=\Thu(\Gl)$ by Lemma \ref{LemGlandMl}.

Assume now that $\Thu(G)=\Thu(\Gl)$ holds and let $S$ be a generating pair of $G$.
By \cite[Proposition 5.3]{ChGu05}, there is a sequence $(G_n,S_n)$ of $2$-generated marked subgroups of $\Gl$ that converges to $(G,S)$ in $\M$.
As $G$ is not abelian, $G_n$ is a non-abelian $2$-generated subgroup of $\Gl$ for all $n$ large enough. Thus $G_n=G(\lambda^{k_n})$ with $k_n \in  \Z$ for all $n$ large enough.  As $\{\lambda^{k_n}: n \in \N\}$ is tame by Corollary \ref{CorBounded}, $G$ is either isomorphic to a group of the form $G(\lambda^k),M(\lambda^k),M$ ($k \in \Z \setminus \{0\}$) by Proposition \ref{PropRoot}.$ii$ and Theorem \ref{ThConv}.$ii$ or it is isomorphic to $\Z \wr \Z$. It follows from Lemma \ref{LemThGl} and Lemma \ref{LemGlandMl} that $G$ is either isomorphic to $\Gl$ or $\Ml$.

\end{proof}

All marked groups considered in this article are finitely generated metabelian groups, and hence equationally Noetherian groups \cite[Theorem B1]{BMR99}.
The Krull dimension of a finitely generated equationally Noetherian group $G$ is the largest integer $k \ge 0$ such that there exists a chain of surjective homomorphisms $$G=Q_0 \longrightarrow Q_1 \longrightarrow \dots \longrightarrow Q_k$$ where $Q_{i+1}$ is a proper quotient of $Q_i$ with the same universal theory as 
$G$ for every $i$. 
Remeslennikov and Timoshenko calculated the Krull dimensions of finitely generated free metabelian groups and wreath products of free abelian groups of finite rank \cite{RT06}. Myasnikov and Romanovskiy defined a larger class of groups, named \emph{rigid groups}, and showed that the Krull dimension of a rigid group is finite \cite{MR08}. 
The next corollary provide examples of non-rigid groups of Krull dimension one.

\begin{corollary}\label{CorKrull}
Let $\lambda \in \C \setminus \{0,1\}$. Then the Krull dimension of $\Ml$ is $1$. 
\end{corollary}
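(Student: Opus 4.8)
The plan is to read the dimension off directly from Theorem \ref{ThUniv} together with the Hopf property of finitely generated metabelian groups. The first thing I would record is a normal form for admissible chains: if $M(\lambda) = Q_0 \twoheadrightarrow Q_1 \twoheadrightarrow \dots \twoheadrightarrow Q_k$ is any chain as in the definition of Krull dimension, then every $Q_i$ is a quotient of $M(\lambda)$, hence $2$-generated, and shares $\Thu(M(\lambda))$; since $\lambda \neq 1$ the group $M(\lambda)$ is non-abelian, so each $Q_i$ is non-abelian as well. By Lemma \ref{LemGlandMl} we have $\Thu(M(\lambda)) = \Thu(G(\lambda))$, so Theorem \ref{ThUniv} applies and forces $Q_i \cong G(\lambda)$ or $Q_i \cong M(\lambda)$ for every $i$. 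All of these groups are finitely generated metabelian, hence Hopfian.

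To get the lower bound $\geq 1$, I would exhibit $G(\lambda)$ as a proper quotient of $M(\lambda)$ with the same universal theory. Using the metabelian presentations of Subsection \ref{SubSecGP}, I would verify that $G(\lambda)$ satisfies the defining relations of $M(\lambda)$: writing $c = [a,b]$, in $G(\lambda) = \Z[\lambda^{\pm1}] \rtimes \Z$ the element $c$ represents $\lambda - 1$ and $a$ acts by multiplication by $\lambda$, so $c^{P_{\lambda}(a)} = P_{\lambda}(\lambda)(\lambda-1) = 0$; and when $\lambda$ is a root of unity the extra relation $c^{b-1} = 1$ holds because $b$ is the unit of the commutative ring and therefore acts trivially. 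Hence the epimorphism $M \twoheadrightarrow G(\lambda)$, $a \mapsto a$, $b \mapsto b$, factors through $M(\lambda)$, yielding a surjection $\pi \colon M(\lambda) \twoheadrightarrow G(\lambda)$. I would then check that $\pi$ is proper: if $\lambda$ is not a root of unity, $c^{b-1}$ is nontrivial in $M(\lambda)$ but trivial in $G(\lambda)$; if $\lambda$ is a primitive $m$-th root of unity, $b^{\Phi_m(a)}$ is killed in $G(\lambda)$ while its image in $M(\lambda)^{\mathrm{ab}} = \Z^2$ is $(0, \Phi_m(1))$ with $\Phi_m(1) \neq 0$. In either case $\ker \pi \neq \{1\}$, so together with Lemma \ref{LemGlandMl} the single edge $M(\lambda) \twoheadrightarrow G(\lambda)$ is an admissible chain of length $1$. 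As a by-product, Hopfianity of $M(\lambda)$ and properness of $\pi$ force $G(\lambda) \not\cong M(\lambda)$, which is exactly what keeps the dimension from collapsing to $0$.

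For the upper bound $\leq 1$, I would argue by contradiction that no admissible chain of length $2$ exists. Given $M(\lambda) = Q_0 \twoheadrightarrow Q_1 \twoheadrightarrow Q_2$, the normal form above gives $Q_i \in \{G(\lambda), M(\lambda)\}$ up to isomorphism. Since $M(\lambda)$ is Hopfian, $Q_1$ cannot be a proper quotient of $M(\lambda)$ isomorphic to $M(\lambda)$, so $Q_1 \cong G(\lambda)$; since $G(\lambda)$ is Hopfian, $Q_2$ cannot be a proper quotient of $Q_1 \cong G(\lambda)$ isomorphic to $G(\lambda)$, so $Q_2 \cong M(\lambda)$. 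This produces proper surjections $M(\lambda) \twoheadrightarrow G(\lambda) \twoheadrightarrow M(\lambda)$ whose composite is a surjective endomorphism of $M(\lambda)$ with nontrivial kernel (that of the first map), contradicting the Hopf property. Hence the Krull dimension is exactly $1$.

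The hard part is the lower bound, and specifically certifying that $\pi$ is a \emph{proper} quotient map rather than an isomorphism: this is the only place where genuine computation enters, and it is precisely the non-collapse statement $G(\lambda) \not\cong M(\lambda)$ in disguise. The dichotomy between the root-of-unity case (where the witnessing kernel element lives in the abelianization) and the generic case (where $c^{b-1}$ already separates the two groups) simply reflects that $M(\lambda)$ has two different shapes depending on $\lambda$. Everything else is a short formal deduction from Theorem \ref{ThUniv}, Lemma \ref{LemGlandMl}, and Hopfianity.
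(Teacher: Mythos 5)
Your proof is correct and follows essentially the same route as the paper: exhibit the proper epimorphism $\pi\colon M(\lambda)\twoheadrightarrow G(\lambda)$ induced by $a\mapsto a$, $b\mapsto b$, then combine Theorem \ref{ThUniv} (via Lemma \ref{LemGlandMl}) with Hopfianity of finitely generated metabelian groups to rule out any longer chain. The only cosmetic difference is that you certify properness of $\pi$ by exhibiting explicit nontrivial kernel elements ($c^{b-1}$, resp.\ $b^{\Phi_m(a)}$ detected in the abelianization), whereas the paper gets the same conclusion by citing the non-isomorphism $M(\lambda)\not\cong G(\lambda)$ from Lemma \ref{LemDisc}.$iii$.
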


\begin{proof}
It follows from the definitions and Lemma \ref{LemDisc}.$iii$ that the map $a \mapsto a,b \mapsto b$ induces a surjective morphism $\pi$ from $M(\lambda)$ onto $\Gl$ with non-trivial kernel. 
As $M(\lambda)$ and $\Gl$ are Hopfian, we deduce from Theorem \ref{ThUniv} that $M(\lambda) \stackrel{\pi}{\longrightarrow} \Gl$ is the longuest chain of proper epimorphisms starting with $M(\lambda)$ such that every group in the chain has the same universal theory.
\end{proof}

\section{Acknowledgments}
We are grateful to Tanguy Rivoal and Jean-Louis Verger-Gaugry for useful discussions and references on algebraic integers. We would like to thank Pierre de la Harpe for his comments on an earlier version.

\newcommand{\etalchar}[1]{$^{#1}$}
\def\cprime{$'$}

Author address:

\medskip

L. G. Mathematisches Institut, Georg-August Universit\"at,
Bunsenstrasse 3-5, G\"ottingen 37073, Germany,
guyot@uni-math.gwdg.de
\medskip

\end{document}